\numberwithin{equation}{section}
\newtheorem{theorem}{Theorem}[section]
\newtheorem{lemma}[theorem]{Lemma}
\newtheorem{proposition}[theorem]{Proposition}
\newtheorem{corollary}[theorem]{Corollary} 
\theoremstyle{definition}  
\newtheorem{definition}[theorem]{Definition}
\newtheorem{example}[theorem]{Example}
\newtheorem{conjecture}[theorem]{Conjecture}  
\newtheorem{question}[theorem]{Question}
\newtheorem{remark}[theorem]{Remark}
\newcommand{\C}{{\mathcal C}}
\newcommand{\cA}{{\mathcal A}}
\newcommand{\cD}{{\mathcal D}}
\newcommand{\be}{{\bf 1}}
\newcommand{\kk}{{\bf k}}
\newcommand{\ot}{{\otimes}}
\newcommand{\bt}{{\, \boxtimes \,}}
\newcommand{\BZ}{{\mathbb Z}}
\newcommand{\BN}{{\mathbb N}}
\newcommand{\BC}{{\mathbb C}}
\newcommand{\BR}{{\mathbb R}}
\newcommand{\Ve}{\mathrm{Vec}}
\newcommand{\Rep}{\mathrm{Rep}}
\newcommand{\Ver}{\mathrm{Ver}}
\newcommand{\sVe}{\mathrm{sVec}}
\newcommand{\sVec}{\sVe}
\newcommand{\Fr}{{\mathrm{Fr}}}
\newcommand{\Triv}{\mathrm{Triv}}
\newcommand{\KK}{\mathbb K}
\newcommand{\LL}{\mathbb L}
\newcommand{\CC}{{\mathcal C}}
\newcommand{\Dim}{\mathrm{Dim}}
\newcommand{\cO}{\mathcal{O}}
\newcommand{\Frob}{\mathrm{Frob}}
\newcommand{\Sym}{\mathcal{S}ym}
\newcommand{\FPdim}{\mathrm{FPdim}}
\newcommand{\gd}{\mathsf{gd}}
\newcommand{\sd}{\mathsf{sd}}
\newcommand{\ad}{\mathsf{ad}}
\newcommand{\Hom}{\mbox{Hom}}
\newcommand{\Ind}{\mathrm{Ind}}
\newcommand{\id}{\mathrm{id}}
\newcommand{\Id}{\mbox{Id}}
\newcommand{\End}{\mathrm{End}}
\begin{document}

\title{On Frobenius exact symmetric tensor categories}

\author{Kevin Coulembier}
\address{School of Mathematics and Statistics, University of Sydney, Australia}
\email{kevin.coulembier@sydney.edu.au}
\author{Pavel Etingof}
\address{Department of Mathematics, MIT, Cambridge, MA USA 02139
}
\email{etingof@math.mit.edu}
\author{Victor Ostrik}
\address{Department of Mathematics,
University of Oregon, Eugene, OR 97403, USA}
\address{Laboratory of Algebraic Geometry,
National Research University Higher School of Economics, Moscow, Russia}
\email{vostrik@math.uoregon.edu}
\dedicatory{}

\begin{abstract} A fundamental theorem of P. Deligne (2002) states that a pre-Tannakian category over an algebraically closed field of characteristic zero admits a fiber functor to the category of supervector spaces (i.e., is the representation category of an affine proalgebraic supergroup) if and only if it has moderate growth (i.e., the lengths of tensor powers of an object grow at most exponentially). In this paper we prove a characteristic $p$ version of this theorem. 
Namely we show that a pre-Tannakian category over an algebraically closed field of characteristic $p>0$ admits a fiber functor into the Verlinde category $\Ver_p$ (i.e., is the representation category of an affine group scheme in $\Ver_p$) if and only if it has moderate growth and is Frobenius exact. This implies that Frobenius exact pre-Tannakian categories of moderate growth admit a well-behaved notion of Frobenius-Perron dimension.
 
It follows that any semisimple pre-Tannakian category of moderate growth has a fiber functor to ${\rm Ver}_p$ (so in particular Deligne's theorem holds on the nose for semisimple  pre-Tannakian categories in characteristics $2,3$). This settles a conjecture of the third author from 2015. 

In particular, this result applies to semisimplifications of categories of modular representations of finite groups (or, more generally, affine group schemes), which gives new applications to classical modular representation theory. For example, it allows us to characterize, for a modular representation $V$, the possible growth rates of the number of indecomposable summands in $V^{\otimes n}$ of dimension prime to $p$.  

\end{abstract}

\date{\today} 
\maketitle  

\begin{center}
\textbf{To Vera Serganova on her 60th birthday with admiration.}
\end{center}
\tableofcontents

\section{Introduction}
Let $\bold k$ be an algebraically closed field and $\C$ a {\it pre-Tannakian category} 
over $\bold k$, i.e., a $\bold k$-linear abelian symmetric rigid monoidal category with bilinear tensor product and ${\rm End}(\bold 1)={\bold k}$, in which objects have finite length. Assume that $\C$ has {\it moderate growth}, i.e., lengths of $V^{\otimes n}$ for $V\in \C$ 
grow at most exponentially with $n$. If ${\rm char}(\bold k)=0$, a fundamental theorem of P. Deligne (\cite{Del02}, 2002) says that $\C$ admits a {\it fiber functor} $F: \C\to {\rm sVec}$ to the category of supervector spaces, and this functor is unique.\footnote{Conversely, it is obvious that any pre-Tannakian category admitting such a functor has moderate growth.} In other words, $\C$ is the category of representations of the {\it proalgebraic supergroup} $G:=\underline{\rm Aut}_{\otimes}(F)$ on which the parity element $z\in G(\bold k)$ acts by the parity operator (and the pair $(G,z)$ is uniquely determined by $\C$ up to isomorphism). This theorem extended Deligne's earlier theorem (\cite{Del02}, 1990) which characterises Tannakian categories (representation categories of affine group schemes) as those pre-Tannakian categories in which every object has a vanishing exterior power. It is well-known that these theorems do not generalize verbatim to ${\rm char}(\bold k)=p>0$, and their extension to positive characteristic have been an open problem for the last 20 years. The goal of this paper is to propose a (partial) solution to this problem. 

Namely, in our previous works \cite{EOf},\cite{Co} we defined several versions of {\it Frobenius functors} for symmetric tensor categories in characteristic $p$. We say that $\C$ is {\it Frobenius exact} if either of these functors (it does not matter which one) is exact. Any symmetric tensor category which admits a symmetric tensor functor to a semisimple tensor category must be Frobenius exact. 

Now let $\Ver_p$ be the {\it Verlinde category}, i.e., the semisimplification of the category of representations of $\Bbb Z/p$ over $\bold k$ (see \cite{O} and references therein). Following \cite{De}, for $X\in\C$, we denote by $\mathrm{A}^nX$ the image of the skew-symmetrizer of the symmetric group. This is one of the possible notions of an exterior power in a symmetric tensor category. Then our main result is the following theorem, see Corollary~\ref{CorChain} and Theorem~\ref{MainThmText}, which in particular shows that Deligne's criteria for Tannakian and super-Tannakian categories in characteristic zero become equivalent in positive characteristic.   

\begin{theorem}\label{MainThm} For a pre-Tannakian category $\C$, the following conditions are equivalent:
\begin{enumerate}
\item $\C$ admits a symmetric tensor functor $\C \to \Ver_p$;
\item $\C$ is Frobenius exact and of moderate growth;
\item $\C$ is Frobenius exact and for every $X\in\C$, there exists a natural number $n$ for which $\mathrm{A}^nX=0$.
\end{enumerate}
Moreover, a functor as in (1) is unique up to isomorphism when it exists. 
\end{theorem}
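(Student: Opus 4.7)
The plan is to establish the three directions $(1) \Rightarrow (2),(3)$, the equivalence $(2) \Leftrightarrow (3)$, and the hard direction $(3) \Rightarrow (1)$ in turn, with the last being the main content of the paper.

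For $(1) \Rightarrow (2)$, any symmetric tensor functor $F \colon \C \to \Ver_p$ between pre-Tannakian categories is automatically faithful and exact, so a Jordan--H\"older filtration of $X^{\otimes n}$ in $\C$ maps to a strict filtration in $\Ver_p$, giving $\ell_{\C}(X^{\otimes n}) \le \ell_{\Ver_p}(F(X)^{\otimes n})$; since $\Ver_p$ is a finite fusion category, the right-hand side grows at most exponentially. Frobenius exactness of $\C$ follows because the Frobenius functor is natural with respect to symmetric tensor functors and is trivially exact on the semisimple target $\Ver_p$. For $(1) \Rightarrow (3)$, $F$ commutes with the skew-symmetrizer, so $F(\mathrm{A}^n X) = \mathrm{A}^n F(X)$; vanishing of $\mathrm{A}^n$ for sufficiently large $n$ on the finitely many simples of $\Ver_p$ (a direct check) combined with faithfulness of $F$ yields (3). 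The equivalence $(2) \Leftrightarrow (3)$ under Frobenius exactness should be an application of the Frobenius--Perron dimension theory built up in the earlier sections: Frobenius exactness upgrades $\mathrm{FPdim}$ to a well-behaved invariant, and moderate growth of lengths is then equivalent to finiteness of $\mathrm{FPdim}$, which in turn is equivalent to eventual vanishing of exterior powers.

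The heart of the proof is the implication $(3) \Rightarrow (1)$, which builds the fiber functor to $\Ver_p$. My approach would iterate the exact symmetric tensor Frobenius functor $\Fr \colon \C \to \C \bt \Ver_p$. Concretely, one composes to obtain a tower of tensor functors $\C \to \C \bt \Ver_p^{\bt n}$ and, on each level, quotients by a maximal tensor ideal (the ``chain'' referred to in Corollary~\ref{CorChain}) to obtain a decreasing sequence of pre-Tannakian quotients of $\C$. The moderate growth and exterior-power hypotheses should force this chain to stabilize: the eventual quotient is a fusion category admitting a canonical symmetric tensor functor to $\Ver_p$, and composing with the tower yields the required $F \colon \C \to \Ver_p$.

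The main obstacle I anticipate is twofold. First, proving that the iterated Frobenius tower actually stabilizes --- this is where the growth and exterior-power hypotheses enter crucially, and one needs precise control over how $\Fr$ interacts with lengths and simple constituents. Second, identifying the stable quotient with $\Ver_p$ itself, rather than a larger symmetric fusion category: this should follow from a minimality characterization of $\Ver_p$ among semisimple symmetric tensor categories in characteristic $p$, combined with the fact that no new simples can be produced by the iteration under the given hypotheses. Finally, uniqueness of $F$ is expected to follow by standard Tannakian arguments: two such functors determine isomorphic affine group schemes in $\Ver_p$, and the torsor of isomorphisms between them is trivial since $\Ver_p$ carries no nontrivial such torsors in this setting.
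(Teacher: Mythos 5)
Your treatment of the easy directions is essentially right: $(1)\Rightarrow(2)$ and $(1)\Rightarrow(3)$ are exactly as in the paper (faithful exactness of tensor functors, plus the computation $\ad(L_i)=i$ in $\Ver_p$, cf.\ Example~\ref{ExVerpad}). But two of your other steps have genuine problems. First, your route to $(2)\Leftrightarrow(3)$ via Frobenius--Perron dimensions is circular: in the paper the well-behaved $\FPdim$ on these categories (Section 8) is a \emph{consequence} of the main theorem. The actual argument is Corollary~\ref{CorChain}, which runs through the dimension functions $\gd$, $\sd$, $\ad$; in particular the implication ``moderate growth $\Rightarrow$ $\mathrm{A}^nX=0$ for some $n$'' requires the nontrivial symmetric-group input of Appendix~A (Lemma~\ref{LemSym}), which your sketch does not supply. (One can avoid this by proving $(2)\Rightarrow(1)$ directly and deducing $(2)\Rightarrow(3)$ a posteriori, but that is not the logical structure you chose.)

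The serious gap is in the hard direction. Your plan is to iterate $\Fr$ and ``quotient by a maximal tensor ideal'' at each stage to obtain a decreasing chain of pre-Tannakian quotients stabilizing to a fusion category. This cannot work as stated: a pre-Tannakian category has no nonzero proper tensor ideals in the abelian sense, and if you instead mean the ideal of negligible morphisms (semisimplification), the quotient functor is not exact, so composing with it can never produce the exact faithful symmetric tensor functor $\C\to\Ver_p$ that condition (1) demands. (Also, Corollary~\ref{CorChain} is a chain of equivalent \emph{conditions}, not a chain of quotient categories.) The paper's construction goes in the opposite direction: it \emph{enlarges} $\C$ by a filtered colimit along the iterated Frobenius $\Fr[I]$ (the perfection of Section 6), so that $\Fr$ becomes fully faithful; it then shows $\Fr$ lands in $\C^*\boxtimes\Ver_p$ where $\C^*$ is graded by $\BZ/2(p-1)$ with Tannakian identity component --- the Tannakian recognition resting on the Key Lemma $\Fr_+^{(j)}\simeq\Fr_+^j$ (Lemma~\ref{ThmFrj}) and Theorem~\ref{ThmTan2} --- and finally builds the fiber functor by de-equivariantizing the Tannakian part (Proposition~\ref{PropGradTan}) to land in $\sVec\boxtimes\Ver_p\to\Ver_p$. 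None of these ingredients appears in your sketch, and your fallback (that the stable quotient maps to $\Ver_p$ ``by minimality'') would in any case invoke Ostrik's fusion-category theorem, which the paper deliberately avoids. Your uniqueness argument is also too quick: the paper reduces uniqueness to Deligne's uniqueness of super fiber functors via the compatibility of $F$ with $\Fr$, rather than a general ``no nontrivial torsors'' claim.
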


Thus any Frobenius exact pre-Tannakian category $\C$ of moderate growth over $\bold k$ 
can be realized as the category of representations of the affine group scheme 
$G:=\underline{\rm Aut}_{\otimes}(F)$ in $\Ver_p$ (compatible with the action of $\pi_1(\Ver_p)$), 
and moreover $G$ is uniquely determined by $\C$.

As a special case of the above main theorem, we obtain a new characterisation of Tannakian categories in positive characteristic. This special case is actually proved in Theorem~\ref{ThmTan2} as a crucial step towards the main theorem:
\begin{theorem}\label{MainThmSpec} For a pre-Tannakian category $\C$, the following conditions are equivalent:
\begin{enumerate}
\item $\C$ is the category of representations of an affine group scheme;
\item $\C$ is Frobenius exact, of moderate growth, and the lax monoidal endofunctor $\Fr_+$ of $\C$ from \cite{Co} is monoidal.
\end{enumerate}
\end{theorem}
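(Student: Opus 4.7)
For the direction $(1)\Rightarrow (2)$, I would verify each claim directly from the fiber functor. Write $\C=\Rep(G)$ with fiber functor $\omega:\C\to\Ve$. Then $\mathrm{length}(V^{\otimes n})\leq (\dim\omega V)^n$, giving moderate growth. The Frobenius twist $V\mapsto V^{(p)}$, obtained by pullback along the Frobenius endomorphism of $G$, is an exact strong monoidal endofunctor of $\Rep(G)$, and one identifies it with the functor $\Fr_+$ from \cite{Co}; this simultaneously yields Frobenius exactness and strong monoidality of $\Fr_+$.

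The substantive direction is $(2)\Rightarrow (1)$, and since Theorem~\ref{MainThmSpec} is used as a stepping stone to Theorem~\ref{MainThm}, the argument must be organized to avoid circularly invoking the main theorem. My plan is to construct a fiber functor $\C\to\Ve$ directly, mimicking Deligne's inductive construction from \cite{Del02} but in characteristic $p$: at each stage one selects a tensor generator, exploits Frobenius exactness to control exact sequences, and peels off a Tannakian quotient. The moderate growth assumption controls the induction, and the key role of monoidality of $\Fr_+$ is to pin down the target of the fiber functor as $\Ve$ rather than $\sVec$ or $\Ver_p$: it precludes the appearance of simple objects on which $\Fr_+$ fails to be strongly monoidal, such as the nontrivial simples of $\Ver_p$, so the induction can never encounter them. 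A cleaner alternative I would try in parallel is to show that monoidality of $\Fr_+$, combined with Frobenius exactness and moderate growth, forces $\mathrm{A}^n X=0$ for some $n=n(X)$ for every $X\in\C$, and then invoke a Tannakian recognition theorem in the spirit of Deligne's 1990 criterion.

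The main obstacle is the central assertion that monoidality of $\Fr_+$ on $\C$ obstructs the emergence of non-Tannakian simples. Concretely, one needs the following. If, at some stage of the construction, one obtains a symmetric tensor functor $F:\C'\to\Ver_p$ from a subquotient $\C'\subset\C$, then, because symmetric tensor functors intertwine the Frobenius functors on source and target, monoidality of $\Fr_+$ on $\C$ transports to strong monoidality of $\Fr_+$ on the essential image $F(\C')$; on the other hand, the lax structure maps $\Fr_+(L_i)\otimes\Fr_+(L_j)\to\Fr_+(L_i\otimes L_j)$ on the nontrivial simples $L_i$ of $\Ver_p$ must be shown to fail to be isomorphisms. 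The delicate part is a direct computation in $\Ver_p$ verifying this failure, combined with careful bookkeeping so that the resulting argument dovetails with, rather than depends upon, the proof of Theorem~\ref{MainThm}.
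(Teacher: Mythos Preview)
Your ``cleaner alternative'' is in the right neighborhood, but it misses the essential technical input, and your primary approach (a Deligne-style induction in characteristic $p$) is both vague and not how the paper proceeds.

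The paper's argument for $(2)\Rightarrow(1)$ does not build a fiber functor by induction. Instead it invokes the Tannakian recognition theorem from \cite[Theorems~5.1.2 and~6.1.1]{Co}: $\C$ is Tannakian if and only if for every $X$ one has (a) $\ad(X)<\infty$ and (b) $\mathrm{A}^n\Fr_+^{(j)}(X)=0$ implies $\Fr_+^{(j)}(\mathrm{A}^n X)=0$ for all $n,j$. Condition (a) is exactly the equivalence of moderate growth with finiteness of $\ad$ (Corollary~\ref{CorChain}). Condition (b) is where the real work lies: the criterion involves the \emph{iterated} Frobenius functors $\Fr_+^{(j)}$, not just $\Fr_+$, and these are a priori only lax monoidal. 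The Key Lemma (Lemma~\ref{ThmFrj}) proves that Frobenius exactness forces $\Fr_+^{(j)}\simeq\Fr_+^j$ for all $j$, so that monoidality of $\Fr_+$ propagates to all $\Fr_+^{(j)}$; then (b) is immediate since tensor functors commute with $\mathrm{A}^n$.

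Your proposal does not identify this step. You suggest that ``$\mathrm{A}^n X=0$ for some $n$'' together with something ``in the spirit of Deligne's 1990 criterion'' would suffice, but in characteristic $p$ vanishing of $\mathrm{A}^n$ alone does \emph{not} characterize Tannakian categories (it holds in $\Ver_p$ as well); the extra condition (b) on iterated Frobenius functors is indispensable, and verifying it is precisely where Frobenius exactness and the Key Lemma enter. The obstacle you highlight (computing failure of monoidality of $\Fr_+$ on $\Ver_p$-simples to exclude non-Tannakian targets) is orthogonal to the actual mechanism of the proof.
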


\begin{remark}\label{RemMainThm}
\begin{enumerate}
\item Pre-Tannakian categories of moderate growth which are
not Frobenius exact do exist, see \cite{BE, BEO,AbEnv}. Concretely, if $p$ is the characteristic of the base field, then there exists a family $\Ver_{p^n}$ with $n\in\mathbb{Z}_{>0}$ of pre-Tannakian categories, which are only Frobenius exact if $n=1$.

Similarly, Frobenius
exact pre-Tannakian categories which are not of moderate growth can be constructed by using ultrafilter
techniques, see \cite{Harman}. Thus none of two conditions in Theorem~\ref{MainThm}(2)
can be dropped.
\item In the special case of fusion categories (which are automatically Frobenius
exact and of moderate growth), the existence of the functor $\C\to\Ver_p$ was established
in \cite{O}. Also for finite tensor categories (which are automatically of moderate
growth) the equivalence of Theorem~\ref{MainThm} (1) and (2) was established in \cite{EOf}.
The proof in the current paper is independent from \cite{O,EOf}, in particular it does not use lifting theory and the proof in the current paper is in some sense closer in spirit to \cite{Del02}. 
\item Alternative criteria to Theorem~\ref{MainThmSpec} for recognising (super) Tannakian categories are derived in Theorem~\ref{ThmFP}(5) and (6), and in \cite{Co}. \end{enumerate}
\end{remark}

Theorem \ref{MainThm} implies that Frobenius exact pre-Tannakian categories of moderate growth admit a well-behaved notion of {\it Frobenius-Perron dimension} which generalizes 
the usual notion of the Frobenius-Perron dimension in finite tensor categories. 
 
It also follows that any {\it semisimple} pre-Tannakian category over ${\bf k}$ of moderate growth has a fiber functor to ${\rm Ver}_p$ (so in particular Deligne's theorem holds on the nose for semisimple  pre-Tannakian categories in characteristics $2,3$). This settles Conjecture 1.3 in \cite{O}. 

In particular, this result applies to {\it semisimplifications} of categories of modular representations of finite groups (or, more generally, affine group schemes), which gives a new kind of applications to classical modular representation theory. For example, it allows us to characterize, for a modular representation $V$, the possible growth rates of the number of indecomposable summands in $V^{\otimes n}$ of dimension prime to $p$. To the best our knowledge, such results are presently inaccessible via classical methods.

We also give generalizations of our results to not necessarily algebraically closed fields, see Theorem~\ref{ThmAllFields}.  

The organization of the paper is as follows. 

Section 2 contains preliminaries.  

Section 3 reviews and extends the theory of Frobenius functors in symmetric tensor categories developed in our previous works. In particular, we describe a relationship between different kinds of Frobenius functors, which plays an important role in the proof of the main result. 

In Section 4 we introduce three notions of dimensions of objects of symmetric tensor categories (two of them valued in positive integers and one in positive real numbers) and study the relationships between them. This provides technical tools both for the proof of the main theorem and for applications. 

In Section 5 we use the results of Sections 3,4 to show the coincidence of two kinds of Frobenius functors. This enables application of results of \cite{Co} to prove Theorem \ref{MainThm}. 

In Section 6 we describe a stabilization construction which faithfully embeds a symmetric tensor category into one with {\it bijective} Frobenius functor (i.e., an equivalence), called {\it perfect}. If the Frobenius functor is injective to start with, i.e. the category is {\it reduced}, this embedding is fully faithful. This is a categorical analog of the procedure of perfection of a (reduced) commutative $\Bbb F_p$-algebra. 

In Section 7 we combine all the technology of the previous subsections to prove Theorem \ref{MainThm}.  

Finally, Section 8 is concerned with applications. Using Theorem \ref{MainThm}, we extend the theory of Frobenius-Perron dimensions for finite tensor categories to Frobenius exact categories of moderate growth. This leads to new results on the decomposition of tensor powers of a modular representation of a finite group (or, more generally, affine group scheme). 

The paper contains Appendix A by Alexander Kleshchev which proves a result in modular representation theory of symmetric groups that is used in Section 4 to show that condition \ref{CorChain}(3) implies \ref{CorChain}(4). Without Appendix A, this result would follow a posteori from our main result, but only for Frobenius exact categories.

{\bf Acknowledgements.} We are very grateful to Alexander Kleshchev for contributing Appendix A and to Dave Benson for useful discussions. The work of K.C. was partially supported by the ARC grant DP210100251.
The work of P.E. was partially supported by the NSF grant DMS - 1916120. The work of V.O. was partially supported by the NSF grant DMS-1702251 and by the Russian Academic Excellence Project `5-100’.

\section{Preliminaries}

Throughout the paper we let $\kk$ denote an arbitrary field, unless further specified. We denote its characteristic by $\mathrm{char}(\kk)=p\ge 0$, which will be assumed to be strictly positive in most instances.

\subsection{Tensor categories and functors}\label{SecSTC}
A $\kk$-linear monoidal category $(\C,\otimes,\be)$ (with $\kk$-linear tensor product) is a {\em tensor category over $\kk$} if
\begin{enumerate}
\item $\dim_{\kk}\End(\be)=1$;
\item $(\C,\otimes,\be)$ is rigid;
\item $\C$ is abelian.
\end{enumerate}
A {\em symmetric} tensor category (STC) is called {\em pre-Tannakian} if all objects have finite length, which then also implies that morphism spaces are finite dimensional, since the adjunction, by (2), allows to reduce the claim to (1). We refer to \cite{De, EGNO} for the basic theory of tensor categories. However, we stress that our terminology deviates very slightly from \cite{De, Del02}, where tensor categories are always symmetric, or \cite{EGNO}, where tensor categories are always assumed to have objects of finite length. The latter condition is not automatic, see for instance Example~\ref{ExD} below or ultraproducts of pre-Tannakian categories, see \cite{Harman}. 

A $\kk$-linear exact (symmetric) monoidal functor between (symmetric) tensor categories over $\kk$ is called a (symmetric) {\em tensor functor}. We will require a slight generalization. For a field extension $\varphi:\kk\hookrightarrow\KK$, a tensor category $\C$ over $\kk$ and a tensor category $\cD$ over $\KK$, a $\varphi$-tensor functor $\C\to\cD$ is an exact $\kk$-linear monoidal functor. 

The reason for introducing the above terminology is the slightly subtle case where char$(\kk)=p>0$, $\KK=\kk$ and $\varphi$ is the Frobenius homomorphism 
$$\Frob:\;\kk\hookrightarrow\kk, \;\lambda\mapsto \lambda^p.$$ Indeed, we will crucially rely on $\Frob$-tensor functors $\C\to\C$ which are thus not tensor functors in the strict sense.

\subsection{Moderate growth and finite generation}\label{DefModGro}
For an object $V$ in an abelian category, we denote by $\ell(V)\in \mathbb{N}\cup \{\infty\}$, the supremum of the set of all $n\in\mathbb{N}$ for which there exists a strictly ascending chain of subobjects
$$0=V_0\subsetneq V_1\subsetneq V_2\subsetneq\cdots\subsetneq V_n=V.$$

An object $X$ in a tensor category $\C$ has moderate growth if the function which sends $n\in\BN$ to the length of $X^{\otimes n}$ can be bounded by an exponential, {\it i.e.} if $\ell(X^{\otimes n})\le a^n$ for some $a\in\mathbb{R}$. We say that $\C$ {\em is of moderate growth} if every object has moderate growth. For a symmetric tensor category this obviously implies that $\C$ must be pre-Tannakian. For more details on moderate growth we refer to Section~\ref{FinProp}.

A STC $\C$ is called {\em finitely generated} if there exists an object $X\in\C$ such that every object is a subquotient of a direct sum of objects $X^{\otimes i}\otimes (X^{\ast})^{\otimes j}$. In particular the category $\Rep G$ of finite dimensional algebraic representations of an affine group scheme $G$ over $\kk$ is finitely generated if and only if $G$ is of finite type (algebraic).

\subsection{Alternating, divided and symmetric powers}

For an object $X$ in a symmetric tensor category $\C$ over $\kk$ and $n\in\BN$, we define the {\em alternating power} $\mathrm{A}^nX$ as the image of the alternating sum (skew-symmetrizer)
$$a_n:=\sum_{s\in S_n}(-1)^{\mathrm{sgn}(s)}s\;:\;X^{\otimes n}\to X^{\otimes n},$$
with action $\kk S_n\to\End(X^{\otimes n})$ given by the symmetric braiding.
If $p=0$ or $p>n$ then this corresponds to the definition of exterior powers as sign-twisted $S_n$-invariants $\Lambda^nX$ or coinvariants $\wedge^n X$, see \cite[\S 2.1]{EHO}. In general, we have canonical morphisms
$$\wedge^n X\twoheadrightarrow \mathrm{A}^nX\hookrightarrow \Lambda^nX,$$
 which are isomorphisms in the tensor category of vector spaces $\Ve$, but not in the tensor category of supervector spaces $\sVe$ if $p>2$. Note that in \cite{Co, De} $\mathrm{A}^nX$ is denoted by $\Lambda^nX$. We have the traditional formula
\begin{equation}\label{eqAlt}
 \mathrm{A}^n(X\oplus Y)\;\simeq\; \bigoplus_{i=0}^n \mathrm{A}^iX \,\otimes\, \mathrm{A}^{n-i}Y.\end{equation}
 
 We also define $\Gamma^n X$ as the maximal subobject on which the braid action of $S_n$ on $X^{\otimes n}$ becomes trivial. Similarly $S^n X$ can be defined as the coinvariants, or as the degree $n$ summand of the symmetric algebra $SX$, given by the quotient of the full tensor algebra of $X$ by the ideal generated by $\Lambda^2 X$.

 \subsection{Deligne tensor product}
 
 We will use the Deligne tensor product of pre-Tannakian categories from \cite[\S 5]{De} or \cite[\S 4.6]{EGNO}. For non-perfect fields it is an open problem whether the product is always defined, see discussion in \cite[\S 6]{CEOP}, but we will only use a very specific case. Consider pre-Tannakian categories $\C$ and $\cD$, with the latter a semisimple category with 1-dimensional morphism spaces of simple objects ($\cD$ is a direct sum of copies of $\Ve_{\kk}$ as a ${\kk}$-linear category). The pre-Tannakian category $\C\boxtimes \cD=\C\boxtimes_{\kk} \cD$ can be defined as the Karoubi envelope of the ordinary product as $\kk$-linear categories $\C\times_{\kk}\cD$, see \cite[\S 2.2]{O} for details. 
 
 For symmetric tensor functors $F_1:\C\to\C'$ and $F_2:\cD\to\C'$ to a pre-Tannakian category $\C'$ there exists a unique (up to isomorphism) tensor functor 
 $$(F_1,F_2):\C\boxtimes \cD\to\C'$$ which yields $F_1$ and $F_2$ when composed with the canonical tensor functors from $\C$ and $\cD$ to $\C\boxtimes\cD$. If $\C'$ is also of the form $\C_1\boxtimes\cD_1$ we will use the notation $F_1\boxtimes F_2$ for the symmetric tensor functor from $\C\boxtimes\cD$ to $\C_1\boxtimes\cD_1$, canonically obtained from $F_1:\C\to\C_1$ and $F_2:\cD\to\cD_1$ using the above constructions.

 For objects $X\in\C$ and $Y\in\cD$, we will write $X\boxtimes Y$ for the object $(X,Y)\in \C\times_{\kk}\cD$ interpreted as an object in $\C\boxtimes\cD$.

 \subsection{Copseudo-Hopf algebras}
Let $\C$ be a $\kk$-linear abelian category with finite dimensional morphism spaces and objects of finite length (for instance the underlying $\kk$-linear category of a pre-Tannakian category). By Takeuchi's theorem, $\C$ (resp. $\Ind\C$) is equivalent to the category of finite dimensional (resp. all) comodules of a coalgebra $(C,\Delta,\epsilon)$ over $\kk$. Furthermore, for every $n\in\mathbb{Z}_{>0}$, we obtain an equivalence between the category of multifunctors 
$$(\Ind\C)^{\times n}\to\Ind\C$$ which are $\kk$-linear and continuous in each variable and the category of $(C,C^{\otimes n})$-bicomodules. This equivalence sends a bicomodule $B$ to the functor
$$(M_1,M_2,\cdots, M_n)\mapsto B\,\Box^{C^{\otimes n}}\,(M_1\boxtimes M_2\boxtimes \cdots \boxtimes M_n),$$
where $\Box$ denotes the cotensor product, and $\boxtimes$ the external tensor product of two comodules.

The above observation allows one to transfer the categorical structure of a pre-Tannakian category into an algebraic structure on the corresponding coalgebra, which is how we define a {\em cotriangular copseudo-Hopf algebra}. In other words, a cotriangular copseudo-Hopf algebra is the algebraic reformulation of the information contained in the pair of a symmetric tensor category $\C$ and a $k$-linear faithful exact functor $F:\C\to\mathrm{Vec}$, by considering the coend of $F$. In more detail:

\begin{definition}\label{Defpseudo}
A {\em copseudo-bialgebra} consists of a coalgebra $(C,\Delta,\epsilon)$ with
\begin{enumerate}
\item A $(C,C^{\otimes 2})$-bicomodule $T$ which is coflat for both right $C$-coactions and such that $T\Box^{C\otimes C}(M\boxtimes N)$ is finite dimensional for all finite dimensional $C$-comodules $M,N$.
\item A finite dimensional $C$-comodule $E$ with $\dim_{\kk}\End_{C}(E)=1$.
\item An isomorphism of $(C,C^{\otimes 3})$-bicomodules from the bicomodule obtained from $T\otimes T$ by cotensoring over the left coaction of the first copy of $T$ and the right $C\otimes \kk$-coaction of the second copy, to the bicomodule obtained from $T\otimes T$ by cotensoring over the left coaction of the second copy of $T$ and the right $\kk\otimes C$-coaction of the first copy, which satisfies the pentagon axiom.
\item Isomorphisms of $C$-bicomodules $T\Box_l^C E\to C$ and $T\Box_r^C E\to C$, where $T\Box_l^C E$ (resp. $T\Box_r^CE$) takes the cotensor product over $C$ using the $C\otimes\kk$ (resp. $\kk\otimes C$) right coaction on $T$, satisyfing the triangle axiom together with the isomorphism from (3).
\end{enumerate}
This is precisely the structure making the category of finite dimensional $C$-comodules into a monoidal category with exact $\kk$-linear tensor product and $\dim_{\kk}\End(\be)=1$.

A {\em cotriangular copseudo-bialgebra} is a copseudo-bialgebra with additionally
\begin{enumerate}
\item[(5)] An involutive $(C,C^{\otimes 2})$-bicomodule isomorphism $T\xrightarrow{\sim} T'$, where $T'$ is the $(C,C^{\otimes 2})$-bicomodule obtained from $T$ by switching the two right $C$-coactions, satisfying the two hexagon axioms together with the isomorphism from (3).
\end{enumerate}

A copseudo-bialgebra is called a {\em copseudo-Hopf algebra} if the induced monoidal structure on its comodule category is rigid. 
\end{definition}

Explicit algebraic expressions for when a copseudo-bialgebra is a copseudo-Hopf algebra are quite unwieldy. Since they also have limited relevance for the remainder of the paper we stick to the above indirect definition. 

\begin{example}\label{ClassEx}
Let $(H,\Delta,\epsilon)$ be a coalgebra and assume we are given coalgebra morphisms $\nabla: H\otimes H\to H$ and $\eta:\kk\to H$.
\begin{enumerate}
\item Let $T:=H\otimes H$ be the $(H,H^{\otimes 2})$-bicomodule with left $H$-coaction defined via $\nabla$.
\item Let $E=\kk$ be the $H$-comodule defined via $\eta$.
\item Both bicomodules in \ref{Defpseudo}(3) are then isomorphic to $H^{\otimes 3}$. The left $H$-coaction of the first is then defined via $\nabla\circ(\nabla\otimes \Id)$ and that of the second via $\nabla\circ(\Id\otimes\nabla)$. If $\nabla$ is associative, we thus obtain the desired isomorphism.
\item Similarly, if $\eta$ is a unit for the product $\nabla$, we can construct canonical isomorphisms as in \ref{Defpseudo}(4).
\end{enumerate}
This spells out directly how a bialgebra can be interpreted as a copseudo-bialgebra.
\end{example} 

\begin{example}
More generally, a copseudo-bialgebra with $T =H\otimes H$ and $E=\kk$ via morphisms $\nabla$ and $\eta$, as in Example~\ref{ClassEx}, is a {\em coquasi-bialgebra} (or dual quasi-algebra), see \cite[\S 2.4]{Ma}. This is the algebraic structure corresponding to a $k$-linear abelian monoidal category $\C$ with quasi-tensor functor $\C\to\mathrm{Vec}$, see \cite[5.1.1]{EGNO}.
\end{example}

\begin{example}
The dual of a finite dimensional pseudo-bialgebra, see \cite[\S4]{EOf}, is canonically a copseudo-bialgebra.
\end{example}
 
\section{Frobenius functors} In this section we assume that $p>0$.
\subsection{General construction}\label{Frob1} Let $\C$ be a pre-Tannakian category and let $H$ be a cotriangular copseudo-Hopf algebra
with tensor equivalence $H\mbox{-Comod}\simeq\C$. For any finite group $\Gamma$ we have an equivalence of tensor categories:
\begin{equation}
\Rep(\Gamma,\C)\simeq H\mbox{-Comod}(\Rep_{\kk} \Gamma),
\end{equation}
where the left-hand side is the equivariantization with respect to the trivial action of $\Gamma$ on $\C$ and the right-hand side is the category of comodules of $H$ in $\Rep \Gamma=\Rep_{\kk} \Gamma$, where $H$ is a copseudo-Hopf algebra in the ind-completion of $\Rep \Gamma$ via the inclusion of the category of vector spaces.
 
Let $G$ be a subgroup of the symmetric group $S_n$ and let $\overline{\Rep G}$ be the semisimplification of the category $\Rep G$, see e.g. \cite{EOs}. We will make the assumption that simple objects in $\overline{\Rep G}$ have one-dimensional endomorphism algebras, for instance $\kk$ is algebraically closed or $G=C_p$ or $G=S_p$. Following the procedure from \cite{EOf}, we define the
symmetric monoidal functor $\Fr_G: \C \to \C \boxtimes \overline{\Rep G}$ as a composition
$$\C \to \Rep(G,\C)\simeq  H\mbox{-Comod}(\Rep G) \to H\mbox{-Comod}(\overline{\Rep G})=\C \boxtimes \overline{\Rep G}$$
where the first arrow is the functor $X\mapsto X^{\otimes n}$. The second arrow is the semisimplification functor $\Rep G\to\overline{\Rep G}$, which is the identity on objects, sends a morphism to its equivalence class and therefore naturally lifts to a functor between the categories of comodules.

Let $P< G$ be a Sylow $p-$subgroup of $G$ and let $L=N_G(P)$ be its normalizer. Recall from \cite[Proposition~4.1]{EOs} that
the restriction functor $\Rep G\to \Rep L$ induces an equivalence $\overline{\Rep G}\simeq
\overline{\Rep L}$, so the functors $\Fr_G$ and $\Fr_L$ are isomorphic.

\begin{proposition} The functor $\Fr_G$ is additive if and only if the subgroup $P< S_n$
is transitive. In particular, a necessary condition is that $n$ is a power of $p$.
\end{proposition}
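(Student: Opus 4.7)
My plan is to decompose $(X\oplus Y)^{\otimes n}$ along the $G$-orbits on $\{0,1\}^n$ and check which orbit contributions survive in $\C\boxtimes\overline{\Rep G}$. Writing an element of $\{0,1\}^n$ as a function $f\colon\{1,\dots,n\}\to\{X,Y\}$, one gets the $G$-equivariant decomposition
\begin{equation*}
(X\oplus Y)^{\otimes n}\;\cong\; X^{\otimes n}\;\oplus\; Y^{\otimes n}\;\oplus\;\bigoplus_{\cO}\Ind_{G_{f_\cO}}^{G} Z_{f_\cO},
\end{equation*}
where $\cO$ ranges over the non-constant $G$-orbits on $\{0,1\}^n$, $f_\cO\in\cO$ is any representative, and $Z_{f_\cO}$ is the tensor monomial with $X$ in positions where $f_\cO=0$ and $Y$ elsewhere, equipped with its natural $G_{f_\cO}$-action. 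The two constant orbits are singletons (since $G\le S_n$ fixes the constant functions pointwise) and contribute $\Fr_G(X)$ and $\Fr_G(Y)$, so additivity amounts to the vanishing of each induced summand in $\C\boxtimes\overline{\Rep G}$.

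The main tool is the following: for any $H<G$ with $p\mid[G:H]$ and any $M\in\Rep(H,\C)$, the object $\Ind_H^G M$ becomes zero in $\C\boxtimes\overline{\Rep G}$. I would apply the projection formula
\begin{equation*}
\kk[G/H]\otimes\Ind_H^G M\;\cong\;\Ind_H^G\bigl(\mathrm{Res}^G_H\kk[G/H]\otimes M\bigr),
\end{equation*}
and use Mackey's decomposition to see that $\mathrm{Res}^G_H\kk[G/H]$ contains the trivial $H$-module as a direct summand (from the double coset of $1$), so $\Ind_H^G M$ is itself a direct summand of $\kk[G/H]\otimes\Ind_H^G M$. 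But $\kk[G/H]$ is zero in $\overline{\Rep G}$ whenever $p\mid[G:H]$: every indecomposable summand has vertex contained in a conjugate of $H$, and a vertex strictly smaller than a Sylow $p$-subgroup of $G$ forces the dimension to be divisible by $p$ by Green's theorem on vertices. Monoidality of the functor $\Rep(G,\C)\to\C\boxtimes\overline{\Rep G}$ then forces the tensor product, and hence its direct summand $\Ind_H^G M$, to vanish.

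With the tool in place the equivalence reduces to a $P$-fixed point count. A non-constant $f\in\{0,1\}^n$ has $p\mid[G:G_f]$ iff the orbit $G\cdot f$ contains no $P$-fixed point (since all Sylows of $G$ are conjugate to $P$). The $P$-fixed points of $\{0,1\}^n$ are the maps constant on each $P$-orbit of $\{1,\dots,n\}$, hence form a set of size $2^{|[n]/P|}$. If $P$ is transitive, only the two constant functions are $P$-fixed and they already lie in the constant $G$-orbits, so every non-constant $G$-orbit has $p\mid[G:G_f]$ and its induced contribution vanishes, giving $\Fr_G(X\oplus Y)\cong\Fr_G(X)\oplus\Fr_G(Y)$. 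Conversely, specialising to $X=Y=\be$ collapses the induced summands to the genuine permutation module $\kk\bigl[\{0,1\}^n\setminus\{\text{constants}\}\bigr]$; any non-constant $P$-fixed point gives a $G$-orbit whose stabilizer contains a Sylow, yielding a non-negligible summand $\kk[G/G_f]$ of $\overline{\Rep G}$, so additivity forces $|[n]/P|=1$, i.e.\ $P$ transitive. Since a transitive $p$-group on $[n]$ forces $n$ to be a power of $p$, the final clause follows.

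The only slightly delicate point is the passage from $\kk[G/H]$ being negligible in $\overline{\Rep G}$ to $\Ind_H^G M$ being zero in $\C\boxtimes\overline{\Rep G}$; this rests on the monoidality of the semisimplification functor together with the $\otimes$-ideal property of negligible morphisms, and is the place where one truly uses the construction of $\Fr_G$ via semisimplification applied to $H$-comodules. The rest is bookkeeping on $G$-sets.
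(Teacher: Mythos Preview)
Your proof is correct and follows the same strategy as the paper, which simply cites Mackey's theorem and \cite[Proposition~4.7]{EOs}; your argument is a self-contained unpacking of these references, with your ``main tool'' (vanishing of $\Ind_H^G M$ in $\C\boxtimes\overline{\Rep G}$ when $p\mid[G:H]$) being precisely the content of the cited proposition from \cite{EOs}. The decomposition of $(X\oplus Y)^{\otimes n}$ into induced pieces indexed by $G$-orbits on $\{0,1\}^n$, together with the $P$-fixed-point analysis for both directions, is exactly the intended application of Mackey.
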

\begin{proof}
This is a direct application of Mackey's theorem and \cite[Proposition~4.7]{EOs}.  The requirement that $n$ be a power of $p$ follows from the observation that the Sylow $p$-subgroups of $S_n$ are not transitive when $n$ is not a power of $p$. 
\end{proof}

Now assume that $n=p^j$ is a power of $p$ and $G$ is as in the proposition. 
Then $$\Fr_G: \C\to\C\boxtimes\overline{\Rep G}$$
is easily seen to be $\Frob^j$-linear.

If we assume that $\kk$ is perfect, then we can define the pre-Tannakian category $\C^{(j)}$ over $\kk$ as the $j-$th Frobenius twist of $\C$, which is the same monoidal category, only with new $\kk$-linear structure obtained from the old via restriction along $\Frob^j:\kk\to\kk$.
Then the functor $$\Fr_G: \C \to \C^{(j)}\boxtimes \overline{\Rep G}$$ is $\kk-$linear. Note that the same definition of $\C^{(j)}$ makes sense without the assumption that $\kk$ be perfect, but then $\C^{(j)}$ is not a tensor category over $\kk$, as the endomorphism algebra of $\be\in\C^{(j)}$ does not have dimension $1$.


\subsection{The (enhanced) Frobenius functor}\label{SecEnh} The most important special cases of the construction above are $G=S_p$ and
$G=C_p$ where $C_p< S_p$ is the cyclic subgroup of order $p$.

The semisimplification $\overline{\Rep C_p}$ is called the (universal) Verlinde category $\Ver_p$, see e.g.
\cite[\S 3.2]{O}, and has simple objects $\be=L_1,L_2,\cdots, L_{p-1}$.
We will call $\Fr_{C_p}$ the Frobenius functor (it is called the external Frobenius
twist in \cite{Co}) and denote it by $\Fr$. We will call the functor
$\Fr_{S_p}$ the enhanced Frobenius functor and denote it by $\Fr^{en}$.

Recall that for $p>2$ we have a decomposition of symmetric tensor categories $\Ver_p=\Ver_p^+\bt \sVe$, see \cite[\S 3.2]{O}, where $\Ver_p^+$ is generated by the $L_i$ for $i$ odd and $L_{p-1}$ is the odd line in $\sVe$.
We also have 
$$\Ver_p^{en}:=\overline{\Rep S_p}=\Ver_p^+\bt \Rep(\BZ/2(p-1),z),$$ see \cite[\S 4.4]{EOs}. 
Since $[S_p:C_p]$ is not divisible by $p$, the restriction functor $\Rep S_p\to \Rep C_p$ maps negligible morphisms to negligible ones, see \cite[\S 3.3]{EOs}. Therefore
it induces a well defined functor $R: \overline{\Rep S_p}\to \overline{\Rep C_p}$; it is clear that
\begin{equation}\label{enFr}
\Fr =(\id \bt R)\circ \Fr^{en}.
\end{equation}
It also follows from the analysis in \cite{EOs} that $R$ corresponds to the identity on $\Ver_p^+$ and the forgetful functor $\Rep(\BZ/2(p-1),z)\to\sVe$. In particular, the preimage of $\sVec$ under $R$ is precisely the tensor subcategory of $\Ver_p^{en}$ generated by invertible objects. We denote the latter by $\Ver_p^{en\ast}\simeq \Rep(\BZ/2(p-1),z)$.

For any $\kk$-linear symmetric monoidal abelian category $\cD$ (with $\kk$-linear tensor product) and $j\in\BZ_{>0}$, following \cite[\S 4]{Co}, we can define a $\Frob^j$-linear functor $\Fr_+^{(j)}:\cD\to\cD$, which sends an object $X$ to the image of canonical composition
$$\Gamma^{p^j}X\;\hookrightarrow\; X^{\otimes p^j}\;\twoheadrightarrow\; S^{p^j}X$$
from the $S_{p^j}$-invariants in $X^{\otimes p^j}$ to the coinvariants. If $\cD=\C$ is a pre-Tannakian category, it follows that $\Fr_+=\Fr_+^{(1)}$ is a direct summand of $\Fr^{en}$; namely $$\Fr_+=(\Id \bt \Hom(\be,?))\circ \Fr^{en}.$$
In particular, $\Fr_+$ is then canonically lax (and oplax) monoidal. We will say that `$\Fr_+$ {\em is} monoidal' when this lax monoidal structure is actually monoidal (equivalently, when $\Fr^{en}$ lands in $\C\boxtimes \Ve\subset\C\boxtimes\Ver_p^{en}$).

\begin{definition}\label{DefFT}
Following \cite[\S 3.3]{O}, the {\em Frobenius type} of a Frobenius exact pre-Tannakian category is the minimal tensor subcategory $\cA\subset\Ver_p$ such that $\Fr$ takes values in
$\C\boxtimes\cA\subset \C\boxtimes\Ver_p$. If $p>3$, by \cite[Proposition 3.3]{O}, the tensor subcategories of $\Ver_p$ are $\mathrm{Vec}$, $\Ver_p^+$, $\sVec$ and $\Ver_p$. If $p=2$, we have $\mathrm{Vec}=\Ver_2$ and if $p=3$, the two options are $\mathrm{Vec}=\Ver_3^+$ and $\sVec=\Ver_3$.
\end{definition}

\begin{example}\label{ExFrFun} We spell out the actions of the Frobenius functors on the category $\Ver_p$ for $p>2$:

(i) We have (see \cite[Example~3.8]{O}):
$$\Fr(L_i)=\left\{ \begin{array}{cc} \be \bt L_i& \; \mbox{if} \; i \; \mbox{is odd};\\
L_{p-1}\bt L_{p-i}& \; \mbox{if} \; i \; \mbox{is even}. \end{array}\right.$$
In particular, $\Ver_p$ is of Frobenius type $\Ver_p^+$.

(ii) With $\overline{S}\in\Ver_p^{en*}$ the image of the sign module of $S_p$ in $\Ver_p^{en}$, we have
$$\Fr^{en}(L_i)=\left\{ \begin{array}{cc} \be \bt L_i& \; \mbox{if} \; i \; \mbox{is odd};\\
L_{p-1}\bt (L_{p-i}\boxtimes \overline{S})& \; \mbox{if} \; i \; \mbox{is even}. \end{array}\right.$$
To demonstrate this we focus on the non-trivial case $p>3$.
It then follows from (i) and \eqref{enFr} that, for $i=3$, we have $\Fr^{en}(L_3)=1\bt X$ where
$R(X)=L_3$. Thus $X\in \Ver_p^{en}=\Ver_p^+\bt \Ver_p^{en*}$ should be a product of $L_3$ and some
invertible object. However $L_3$ is a direct summand of $L_3\ot L_3$, so we get that $X=L_3$.
By considering powers of $L_3$, we obtain the result for $i$ odd. For $i$ even it then suffices to observe that $L_i=L_{p-1}\otimes L_{p-i}$.
\end{example}

\subsection{Properties of Frobenius functors}\label{SecFrComm}

We will use the following observation from \cite[Proposition~5.1(ii)]{EOf}. Consider a pre-Tannakian category $\C$, resp. $\cD$, over a field $\KK$, resp. $\LL$, of characteristic $p$.
For a field extension $\varphi:\KK\hookrightarrow {\LL}$ and
a symmetric $\varphi$-tensor functor $H:\C\to\cD$ we have a commutative diagram
$$\xymatrix{
\C\ar[rr]^-{\Fr}\ar[d]^H&&\C\boxtimes_{\KK}(\Ver_p)_{\KK}\ar[d]^{H'}\\
\cD\ar[rr]^-{\Fr}&&\cD\boxtimes_{\LL}(\Ver_p)_{\LL},
}$$
where we use the notation $(\Ver_p)_{\KK}$ to specify that we consider the Verlinde category over ${\KK}$ and the symmetric $\varphi$-tensor functor $H'$ is the unique one which restricts to $H$ on $\C$ and is the identity on objects in $(\Ver_p)_{\KK}$.

\begin{proposition} \label{Frex}
The following conditions are equivalent for a pre-Tannakian category $\C$:

\begin{enumerate}
\item The functor $\Fr$ is exact;
\item The functor $\Fr^{en}$ is exact;
\item The functor $\Fr_+$ is exact;
\item For any non-zero morphism $u: \be \to X$ we have $\Fr_+(u)\ne 0$;
\item There exists a $\kk $-linear faithful exact symmetric monoidal functor $\C\to\cD$ to a $\kk$-linear abelian symmetric monoidal category (with $\kk$-linear tensor product) which sends every short exact sequence in $\C$ to a split one in $\cD$.
\end{enumerate}
\end{proposition}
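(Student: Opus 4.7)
The plan is to prove the five conditions equivalent through the chain $(2)\Leftrightarrow(1)$, $(2)\Leftrightarrow(3)$, $(3)\Leftrightarrow(4)$, and $(3)\Leftrightarrow(5)$, exploiting the structural decompositions recalled in Section~3.2: $\Fr = (\id\bt R)\circ \Fr^{en}$, $\Fr_+ = (\id\bt\Hom(\be,-))\circ\Fr^{en}$, and the symmetric monoidality of $\Fr^{en}$. The forward directions $(2)\Rightarrow(1)$ and $(2)\Rightarrow(3)$ are immediate because $\id\bt R$ and $\id\bt\Hom(\be,-)$ are exact on the semisimple category $\C\boxtimes \Ver_p^{en}$.

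For $(1)\Rightarrow(2)$ one notes that $R$ is also faithful (the identity on $\Ver_p^+$ and the forgetful functor on $\Rep(\BZ/2(p-1),z)$), so $\id\bt R$ is a faithful exact functor between abelian categories and hence reflects exactness of three-term complexes. For $(3)\Rightarrow(2)$ one uses the monoidal identity
$$\Fr_+(X\otimes Y) \;=\; \bigoplus_L \Fr^{en}_L(X)\otimes \Fr^{en}_{L^*}(Y),$$
where $L$ ranges over the simples of $\Ver_p^{en}$ and $\Fr^{en}_L$ is the $L$-isotypic functor, obtained by extracting the $\be$-component from $\Fr^{en}(X\otimes Y)\simeq\Fr^{en}(X)\otimes\Fr^{en}(Y)$ in the semisimple rigid $\Ver_p^{en}$. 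Applying exact $\Fr_+$ to a SES in $\C$ tensored by any fixed $Y$ (still exact since tensoring is exact in $\C$) gives exactness of each summand $\Fr^{en}_L(-)\otimes \Fr^{en}_{L^*}(Y)$. For any $L$ that actually appears in the image of $\Fr^{en}$, dualizing a witness yields a $Y$ with $\Fr^{en}_{L^*}(Y)\ne 0$; faithful-exactness of tensoring by a nonzero object in the rigid pre-Tannakian $\C$ then promotes this to exactness of $\Fr^{en}_L$ itself, while $L$ not appearing gives the zero functor which is trivially exact.

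The implication $(3)\Rightarrow(4)$ is immediate: $\be$ is simple in the pre-Tannakian $\C$, any nonzero $u:\be\to X$ is monic, and exactness of $\Fr_+$ together with $\Fr_+(\be)=\be\ne 0$ forces $\Fr_+(u)\ne 0$. The main obstacle is the converse $(4)\Rightarrow(3)$: one must upgrade non-vanishing on $\Hom(\be,-)$ to full exactness. My strategy is to combine the lax and oplax monoidal structures on $\Fr_+$ from \cite{Co} with the internal-Hom adjunction $\Hom(A,B)\simeq\Hom(\be,B\otimes A^*)$. Given $u:A\to B$ with corresponding $\tilde u:\be\to B\otimes A^*$, the natural comparison maps between $\Fr_+(B\otimes A^*)$ and $\Fr_+(B)\otimes \Fr_+(A^*)$ induced by the lax/oplax constraints translate vanishing of $\Fr_+(u)$ into vanishing of $\Fr_+(\tilde u)$, contradicting (4) unless $u=0$; this gives faithfulness of $\Fr_+$. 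A further internal-Hom/diagram argument, reducing preservation of kernels and cokernels by the additive $\Fr_+$ to the already-handled case of morphisms out of $\be$, upgrades faithfulness to exactness.

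Finally, $(5)\Rightarrow(3)$ is formal: the universal construction of $\Fr_+$ via the image of $\Gamma^pX\to S^pX$ is natural in $\kk$-linear symmetric monoidal functors, giving $F\circ\Fr_+^{\C}\simeq\Fr_+^{\cD}\circ F$; in $\cD$ every SES in the image of $F$ splits, so the additive $\Fr_+^{\cD}$ preserves their exactness automatically, and faithful-exactness of $F$ transfers this back to $\Fr_+^{\C}$. For $(3)\Rightarrow(5)$ I would construct $\cD$ as a suitable $2$-colimit of iterated enhanced Frobenius twists of $\C$ (in the spirit of Section~6's perfection construction), in which the $\Ext^1$-obstructions to splitting SES from $\C$ are annihilated by successive Frobenius iteration in characteristic $p$; verifying that this colimit is a well-defined $\kk$-linear abelian symmetric monoidal category admitting a faithful exact $\kk$-linear symmetric monoidal functor from $\C$ is the remaining substantial technical point.
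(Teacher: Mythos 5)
Your handling of the easy implications is fine, and your direct argument for $(3)\Rightarrow(2)$ via the isotypic components $\Fr^{en}_L$ (using $\Fr_+(X\otimes Y)\simeq\bigoplus_L\Fr^{en}_L(X)\otimes\Fr^{en}_{L^*}(Y)$, dualizing a witness to get $\Fr^{en}_{L^*}(X^*)\simeq\Fr^{en}_L(X)^*\neq 0$, and using that tensoring with a nonzero object is faithful exact) is a workable alternative to the paper, which instead routes everything through $(3)\Rightarrow(4)\Rightarrow(2)$. But the two implications carrying all the content are not proved. For $(4)\Rightarrow(3)$, the paper's argument is: (4) forces $\Fr^{en}$ to be faithful, and a faithful functor of this kind is automatically exact by \cite[Theorem~2.4.1]{CEOP} (equivalently \cite[Theorem~C]{Co}). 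Your sketch has two problems here. First, you run the internal-Hom transfer through $\Fr_+$ itself, which is only lax/oplax monoidal, so $\Fr_+(u\otimes\id_{A^*})$ is not controlled by $\Fr_+(u)\otimes\Fr_+(\id_{A^*})$ and the claimed implication ``$\Fr_+(u)=0\Rightarrow\Fr_+(\tilde u)=0$'' does not follow from the comparison maps; the correct move is to pass to the genuinely monoidal $\Fr^{en}$, for which $\Fr^{en}(\tilde u)=(\Fr^{en}(u)\otimes\id)\circ\Fr^{en}(\mathrm{coev}_A)$ holds on the nose and whose $\be\boxtimes\be$-component is $\Fr_+$. Second, and more seriously, the promotion of faithfulness to exactness --- which you dispatch as ``a further internal-Hom/diagram argument'' --- is the entire difficulty of the proposition. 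It is a theorem in its own right (a positive-characteristic analogue of Deligne's result that faithful tensor functors between tensor categories are exact, here for a not-yet-known-to-be-exact $\Frob$-linear monoidal functor), not a diagram chase, and without proving or citing it the implication is open.

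The equivalence with (5) is likewise incomplete. For $(3)\Rightarrow(5)$ you propose a $2$-colimit of iterated Frobenius twists and explicitly defer the verification that it yields an abelian symmetric monoidal category receiving a faithful exact $\kk$-linear functor from $\C$; note also that the iterated twist $\Fr[I]$ of Section~6 requires a functor $I:\Ver_p\to\C$ to land back in $\C$, which is not available at this point. The paper simply cites \cite[Theorem~C]{Co} for $(3)\Leftrightarrow(5)$. Finally, in $(5)\Rightarrow(3)$ you need that $\Fr_+^{\cD}$ carries a split short exact sequence to a split one; this uses $\Fr_+(X\oplus Y)\simeq\Fr_+(X)\oplus\Fr_+(Y)$, which holds but only by the Mackey-type argument (transitivity of the Sylow $p$-subgroup of $S_p$), not by mere additivity on morphisms --- a point worth making explicit since the cross terms in $(X\oplus Y)^{\otimes p}$ must be seen to die under $\Triv_{S_p}$.
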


\begin{proof}  It is clear from \eqref{enFr} that (1) is equivalent to (2). 
A direct summand of an exact functor
is exact, so (2) implies (3). The implication (3) $\Longrightarrow$ (4) is obvious from the
exact sequence $0\to \be \to X$. Property (4) implies that the functor $\Fr^{en}$ is faithful,
so it is exact by \cite[Theorem~2.4.1]{CEOP} and we get (2). Alternatively we can use equivalence of (i) and (iii) in \cite[Theorem~C]{Co}. That (3) and (5) are equivalent is also in \cite[Theorem~C]{Co}.
\end{proof}

\begin{definition}
A pre-Tannakian category $\C$ satisfying the equivalent conditions of Proposition \ref{Frex} is called
{\em Frobenius exact}. 
\end{definition}
\section{Finiteness properties}\label{FinProp}
In this section, we let $\kk$ be a field of arbitrary characteristic. We introduce some generalized notions of `dimensions' of objects in (symmetric) tensor categories over $\kk$. These are $\gd$, $\sd$ and $\ad$, and are based respectively on the growth rate, symmetric group actions and alternating powers (which motivates the notation). Unless specified otherwise, in this section $\C$ is a STC (not necessarily pre-Tannakian).

\begin{definition}\label{Def4}
For $X\in\C$, we define the following elements of $\mathbb{R}_{\ge 0}\cup\{\infty\}$:
\begin{enumerate}
\item $\ell(X)$, the length of $X$, see Section~\ref{DefModGro};
\item $\gd(X)=\lim_{n\to\infty} \ell(X^{\otimes n})^{\frac{1}{n}}$;
\item $\sd(X)$, the supremum of $n\in\mathbb{N}$ for which the natural map $\kk S_n\to\End(X^{\otimes n})$ is injective;
\item $\ad(X)$, the supremum of $n\in\mathbb{N}$ for which $\mathrm{A}^n X\not=0$.
\end{enumerate}
\end{definition}

\begin{remark}
The definitions of $\sd(X)$ and $\ad(X)$ make sense in any $\kk$-linear symmetric monoidal category (for the latter one can reformulate the condition $\mathrm{A}^n X=0$ as the property that the skew-symmetrizer acts as zero on $X^{\otimes n}$). As detailed below, $\gd(X)$ is defined in any tensor category, not necessarily symmetric.
\end{remark}

For a tensor category $\mathcal C$ and $X\in \mathcal C$, set $d_n(X):=\ell(X^{\otimes n})$. Because of rigidity, the tensor product of two (simple) objects cannot be zero. In particular, $\gd(X)=\infty$ as soon as  $d_n(X)=\infty$ for some $n$. Assume now that $d_n(x)<\infty$ for all $n\in\BN$, then
$$
d_{n+m}(X)\ge d_n(X)d_m(X),
$$ 
i.e., the sequence $\lbrace d_n(X)^{-1}\rbrace $ is submultiplicative. Therefore, by Fekete's lemma (\cite{B}, Lemma 1.6.3),
the limit in Definition~\ref{Def4}(2) exists. Moreover, if $X\ne 0$, it follows that 
$$1\le \gd(X)={\rm sup}_{n\ge 1}d_n(X)^{\frac{1}{n}}.
$$
Finally, it follows easily that $\gd(X)<\infty$ if and only if $X$ is of moderate growth.

\begin{example}\label{ExVect}
For a vector space $V$ of dimension $m$, we have
$$\ell(V)=\gd(V)=\sd(V)=\ad(V)=m.$$
For a supervector space $W$ of dimension $m|n$, with $n>0$, we have
$$\ell(W)=\gd(W)=m+n,\; \ad(W)=\begin{cases}\infty&\mbox{ if char$(\kk)=0$}\\
m+(p-1)n&\mbox{if char$(\kk)=p>2$.}
\end{cases}$$
Moreover, by \cite{Berele}, we have $\sd(W)=mn+m+n$ when char$(\kk)=0$. By Lemma~\ref{LemXY2}(2) and Proposition~\ref{Propbcd} below and \cite[\S 5.3]{CEOP}, we also know 
$$m+n\;\le\; \sd(W)\;\le\; m+n +\min(mn, (p-2)m,(p-2)n)$$
when char$(\kk)=p>2$.
\end{example}

\begin{remark}
For given $m,n$, for high enough $p$, we have equality 
$$\sd(W)=m+n+mn= m+n +\min(mn, (p-2)m,(p-2)n).$$
For instance, this can be proved as in \cite{Berele} if $p>(m+1)(n+1)$. 
In general, the inequality in Example~\ref{ExVect} reveals that, contrary to vector spaces, injectivity of ${\kk} S_i\to \End_{{\kk}}(({\kk}^{m|n})^{\otimes i})$ depends on the characteristic of $\kk$.
\end{remark}

\begin{example}\label{ExVerpad}
For an object $X\in \Ver_p$ denote by $X^{\sharp}$ its unique lift in $\Rep C_p$ which does not contain projective direct summands, then $\ad X=\ad X^{\sharp}=\dim_{\kk}X^{\sharp}$. Indeed, $\ad X\le \ad X^{\sharp}$ follows since $\ad$ can only decrease along any symmetric monoidal (not necessarily exact) functor. The other direction follows easily since $\mathrm{A}^i L_i^{\sharp}=\Lambda^i L_i^\sharp\simeq \kk$ for all $i<p$, which is a direct summand of the full $i$-th tensor power. In particular
$$\ad(L_i)\;=\; i\quad\mbox{for $1\le i<p$}.$$
\end{example}


\begin{example}
For the non-trivial simple $X$ and the projective cover $P$ of $\be$ in $\Ver_4$ of \cite{BE, BEO}, see Remark~\ref{RemMainThm}(1), it follows from direct computation that
\begin{eqnarray*}&&\ell(P)=2=\gd(P), \;\sd(P)=3, \;\ad(P)=4,\qquad\mbox{and}\\
&&\ell(X)=1, \;\gd(X)=\sqrt{2},\;\sd(X)=2=\ad(X).
\end{eqnarray*}
\end{example}

\begin{proposition}\label{Propbcd}
For $X\in\C$, we have the following relations between the numbers from Definition~\ref{Def4}:
\begin{enumerate}
\item $\sd(X)\le \ad(X)$;
\item $\ell(X)\le \sd(X\otimes X^\ast)$;
\item $\gd(X\otimes X^\ast)<\infty$ $\Rightarrow$ $\sd(X)<\infty$.
\item $\ell(X)\le\ad(X)$;
\item $\gd(X)\le \ad(X)$.
\end{enumerate}
Moreover, if $\mathrm{char}(\kk)>0$, we have
\begin{enumerate}
\item[(6)] $\sd(X)<\infty$ $\Rightarrow$ $\ad(X)<\infty$
\end{enumerate}
\end{proposition}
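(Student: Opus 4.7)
Parts (1) and (3) are immediate from the definitions and rigidity; (4) follows by a straightforward induction on length; (5) reduces to (4) via a submultiplicativity property of $\ad$; while (2) and (6) are more delicate. For (1), set $m:=\ad(X)$. Then $\mathrm{A}^{m+1}X=0$ means that the nonzero element $a_{m+1}\in \kk S_{m+1}$ lies in the kernel of $\kk S_{m+1}\to \End(X^{\otimes(m+1)})$, whence $\sd(X)\le m$. For (3), suppose $\sd(X)=\infty$; then for every $n$, $n!\le \dim_\kk \End(X^{\otimes n})$. By rigidity and the symmetric braiding, $\End(X^{\otimes n})\cong \Hom(\be,(X\otimes X^{\ast})^{\otimes n})$, and since $\be$ is simple, $\dim \Hom(\be,Z)\le \ell(Z)$ for any $Z$. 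Hence $\ell((X\otimes X^{\ast})^{\otimes n})\ge n!$, forcing $\gd(X\otimes X^{\ast})\ge \lim_n(n!)^{1/n}=\infty$ and yielding (3) by contraposition.

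For (4), I induct on $\ell:=\ell(X)$, the cases $\ell\le 1$ being immediate. Choose a short exact sequence $0\to X'\to X\to L\to 0$ with $L$ simple and $\ell(X')=\ell-1$. The $S_n$-equivariant filtration $\{F^k\}$ of $X^{\otimes n}$, in which $F^k$ is the subobject spanned by tensors with at least $k$ factors from $X'$, has associated graded
\[
\bigoplus_{k=0}^{n}\Ind_{S_k\times S_{n-k}}^{S_n}\bigl(X'^{\otimes k}\boxtimes L^{\otimes(n-k)}\bigr).
\]
A direct computation using $a_n g=\mathrm{sgn}(g)a_n$ shows that the image of $a_n$ acting on the $k$-th graded piece is isomorphic to $\mathrm{A}^k X'\otimes \mathrm{A}^{n-k}L$. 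For $n=\ell$, the term at $k=\ell-1$ is $\mathrm{A}^{\ell-1}X'\otimes L$, which is nonzero by the inductive hypothesis and by the fact that tensor products of nonzero objects in a rigid category are nonzero; hence $a_n$ acts nontrivially on $F^{\ell-1}/F^{\ell}$, and therefore $\mathrm{A}^{\ell}X\ne 0$.

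For (5), the key ingredient is the submultiplicativity $\ad(Y\otimes Z)\le \ad(Y)\ad(Z)$, a categorical Cayley--Hamilton identity proved by rewriting the skew-symmetrizer on $(Y\otimes Z)^{\otimes n}\simeq Y^{\otimes n}\otimes Z^{\otimes n}$ via the coproduct $a_n\mapsto \sum_s\mathrm{sgn}(s)\,s\otimes s$. Iterating, $\ad(X^{\otimes n})\le \ad(X)^n$; combined with (4), $\ell(X^{\otimes n})\le \ad(X)^n$, so $\gd(X)\le \ad(X)$. Part (2) is subtler: setting $V:=X\otimes X^{\ast}$, the braiding identifies $V^{\otimes n}\simeq X^{\otimes n}\otimes (X^{\ast})^{\otimes n}$ with the $S_n$-permutation action becoming the diagonal $s\mapsto s\otimes s$, while rigidity identifies $\Hom(\be,V^{\otimes n})\cong \End(X^{\otimes n})$ with the induced $S_n$-action being conjugation. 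The composition series of $X$ with $\ell(X)$ factors is then used to produce enough linearly independent ``separating'' endomorphisms of $X^{\otimes n}$ to force the image of $\kk S_n$ in $\End(V^{\otimes n})$ to be $n!$-dimensional for $n\le \ell(X)$.

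Finally, (6) invokes Appendix~A. Given $0\ne c\in \ker(\kk S_{n_0}\to \End(X^{\otimes n_0}))$ (existing by $\sd(X)<\infty$), Kleshchev's theorem guarantees that for all sufficiently large $m$, the skew-symmetrizer $a_m\in \kk S_m$ lies in the two-sided ideal generated by the images of $c$ under the inclusions $S_{n_0}\times S_{m-n_0}\hookrightarrow S_m$ and their $S_m$-conjugates. Consequently $a_m$ acts as zero on $X^{\otimes m}$, so $\mathrm{A}^m X=0$ and $\ad(X)<\infty$. The main obstacles are the careful construction of separating morphisms required for (2) and the delicate modular representation theoretic input for (6).
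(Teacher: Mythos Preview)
Your arguments for parts (1), (3), (4), (5), and (6) are essentially the same as the paper's. For (4) you unwind by hand what the paper packages as Lemma~\ref{LemXY2} (the inequality $\ad(W)\ge \ad(\mathrm{gr}\,W)$ together with additivity of $\ad$ on direct sums); for (5) you invoke submultiplicativity $\ad(Y\otimes Z)\le \ad(Y)\ad(Z)$, which is exactly Lemma~\ref{LemXY}, though you do not indicate how to prove it --- the paper's trick is to test the identity $\Delta(a_{nm+1})\in \ker(\kk S_{nm+1}^{\otimes 2}\to C_{nm+1,n+1}\otimes C_{nm+1,m+1})$ on vector spaces, via \cite{DP}; and for (6) both you and the paper ultimately rest on Proposition~\ref{PropSasha} from Appendix~A, the paper's phrasing in terms of tensor ideals in $\Sym$ (Lemma~\ref{LemSym}) being a cleaner packaging of the same input.

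Part (2), however, is where your proposal has a genuine gap. You correctly identify the $S_n$-action on $\Hom(\be,V^{\otimes n})\cong\End(X^{\otimes n})$ as conjugation, but then assert that the composition series of $X$ ``is used to produce enough linearly independent separating endomorphisms'' without saying what these endomorphisms are or why they force injectivity of the \emph{algebra} map $\kk S_n\to \End(V^{\otimes n})$ (as opposed to merely the group map). Any construction from the composition series naturally lives in $\End(\mathrm{gr}\,X^{\otimes n})$, not $\End(X^{\otimes n})$, so lifting is an additional issue. The paper sidesteps all of this: from $\ell(X)\ge l$ one gets $[X\otimes X^\ast:\be]\ge l$, so one can filter $Y=X\otimes X^\ast$ with $\be^l$ a direct summand of $\mathrm{gr}\,Y$; then Lemma~\ref{LemXY2}(1) gives $\sd(Y)\ge\sd(\mathrm{gr}\,Y)\ge\sd(\be^l)=l$, the last equality being the classical fact that $\kk S_n$ acts faithfully on $(\kk^l)^{\otimes n}$ for $n\le l$. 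This two-line argument avoids the conjugation picture entirely and is what you are missing.
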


We need some preparation for the proof.
\begin{lemma}\label{LemXY}
For $X,Y\in\C$, we have\footnote{Both inequalities $\sd(X\otimes Y)\ge \sd(X)\sd(Y)$ and $\sd(X\otimes Y)\le \sd(X)\sd(Y)$ are false in general. Indeed,  it suffices to consider $\C=\sVec$ in characteristic zero, and $X=Y$ a supervector space of superdimension either $1|1$ or $2|2$.}
$$\gd(X\otimes Y)\ge \gd(X)\gd(Y)\quad\mbox{and}\quad\ad(X\otimes Y)\le \ad(X)\ad(Y).$$
\end{lemma}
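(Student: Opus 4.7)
The first inequality $\gd(X\otimes Y)\ge \gd(X)\gd(Y)$ I would reduce to the following general fact: in any tensor category, $\ell(A\otimes B)\ge \ell(A)\ell(B)$. Given this, the symmetric braiding gives $(X\otimes Y)^{\otimes n}\cong X^{\otimes n}\otimes Y^{\otimes n}$, so $\ell((X\otimes Y)^{\otimes n})\ge \ell(X^{\otimes n})\ell(Y^{\otimes n})$; taking $n$-th roots and passing to the limit via Definition~\ref{Def4}(2) yields the bound on $\gd$. For the general fact, I would first show that for a simple object $S$ the exact endofunctor $S\otimes(-)$ preserves nonzero objects (since $S\otimes T=0$ combined with the unit $\be\hookrightarrow S^\ast\otimes S$ would force $T=0$), so that $\ell(S\otimes T)\ge \ell(T)$ follows by tensoring a composition series of $T$ with $S$. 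Filtering $A$ by a composition series and tensoring by $B$ then gives $\ell(A\otimes B)\ge \ell(A)\ell(B)$.

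For the second inequality, set $a=\ad(X)$, $b=\ad(Y)$, $n=ab+1$; the goal is to show $\mathrm{A}^n(X\otimes Y)=0$, equivalently that the skew-symmetrizer $a_n\in \kk S_n$ annihilates $(X\otimes Y)^{\otimes n}$. Using the braiding to identify $(X\otimes Y)^{\otimes n}\cong X^{\otimes n}\otimes Y^{\otimes n}$, the $S_n$-action becomes diagonal, so the relevant operator on $X^{\otimes n}\otimes Y^{\otimes n}$ is $\sum_\sigma (-1)^{\mathrm{sgn}(\sigma)}\sigma\otimes\sigma\in \kk S_n\otimes\kk S_n$. The combinatorial input is the pigeonhole fact that every partition $\lambda\vdash ab+1$ has either $\ell(\lambda)\ge a+1$ or $\lambda_1\ge b+1$. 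In characteristic zero this suffices directly via the Cauchy decomposition $\mathrm{A}^n(X\otimes Y)\cong\bigoplus_\lambda S^\lambda X\otimes S^{\lambda'}Y$: each summand vanishes since $S^\lambda X=0$ once $\ell(\lambda)>\ad(X)$ and $S^{\lambda'}Y=0$ once $\ell(\lambda')>\ad(Y)$.

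The main obstacle lies in positive characteristic, where the Cauchy decomposition is not valid in this semisimple form, and, more fundamentally, the naive factorisation $a_n=(\sum_\tau(-1)^{\mathrm{sgn}(\tau)}\tau)\cdot a_T$ in $\kk S_n$ does \emph{not} imply vanishing on $X^{\otimes n}\otimes Y^{\otimes n}$: under the diagonal embedding the partial alternator $a_T$ acts as $\sum_h(-1)^{\mathrm{sgn}(h)} h\otimes h$, whose image on $X^{\otimes n}\otimes Y^{\otimes n}$ is precisely $\mathrm{A}^{|T|}(X\otimes Y)$ on the tensor factors indexed by $T$, i.e.\ exactly the kind of object we are trying to control. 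To bridge this gap I would express the diagonal image $\sum_\sigma(-1)^{\mathrm{sgn}(\sigma)}\sigma\otimes\sigma$ as an element of the two-sided ideal in $\kk S_n\otimes \kk S_n$ generated by $\{a_T\otimes 1:|T|>a\}\cup\{1\otimes a_S:|S|>b\}$, using a Young-symmetrizer decomposition of $\kk S_n$ to route each Young-symmetrizer component through an antisymmetrizer on whichever of the two sides carries the long column or long row; alternatively I would reduce the vanishing to a universal pre-Tannakian model (for instance a suitable Deligne interpolation category) in which the bounds $\ad(X)\le a$ and $\ad(Y)\le b$ are realised tautologically and the computation can be made concrete.
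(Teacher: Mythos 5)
Your treatment of the $\gd$ inequality is fine and matches the paper's (the paper simply calls it straightforward; your reduction to $\ell(A\otimes B)\ge\ell(A)\ell(B)$ via rigidity is the intended argument). For the $\ad$ inequality you correctly identify the crux: one must show that the diagonal image $\Delta(a_{ab+1})\in\kk S_{ab+1}\otimes\kk S_{ab+1}$ lies in the two-sided ideal generated by the elements $a_{a+1}\otimes 1$ and $1\otimes a_{b+1}$. But you do not actually prove this, and neither of your two proposed routes closes the gap as stated. The Young-symmetrizer decomposition of $\kk S_n$ is not available in positive characteristic (the group algebra is not semisimple and the symmetrizers are not usable idempotents), so "routing each component through an antisymmetrizer" does not get off the ground precisely in the case that matters. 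The "universal model" idea is closer to the truth but is missing its essential ingredient: it is not enough to find a model where $\ad(X)\le a$ and $\ad(Y)\le b$ hold and where $a_{ab+1}$ visibly vanishes; one needs a model in which the kernel of the action of $\kk S_N\otimes\kk S_N$ is \emph{exactly} the ideal in question, so that vanishing in the model implies ideal membership and hence vanishing in every category.

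The paper supplies this via the characteristic-free second fundamental theorem of invariant theory of de Concini--Procesi \cite[Theorem~4.2]{DP}: for a vector space $V$ of dimension $a$ over any field, the kernel of $\kk S_N\to\End(V^{\otimes N})$ is precisely the two-sided ideal generated by $a_{a+1}$, i.e., the image is $C_{N,a+1}$ in the paper's notation. Taking $X=\kk^a$, $Y=\kk^b$ and $N=ab+1$, the operator $a_N$ kills $(\kk^a\otimes\kk^b)^{\otimes N}=(\kk^{ab})^{\otimes N}$ by dimension count, and since the lower horizontal arrow of the paper's diagram has image exactly $C_{N,a+1}\otimes C_{N,b+1}$, one concludes $\Delta(a_N)$ dies in $C_{N,a+1}\otimes C_{N,b+1}$ --- which is exactly the ideal membership you need, now valid in an arbitrary symmetric tensor category. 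So your outline is recoverable, but only after importing this specific classical input; without it the positive-characteristic case remains open in your write-up.
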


\begin{proof}
The property for $\gd$ is straightforward, so we only consider $\ad$. Suppose that $\mathrm{A}^{n+1} X=0=\mathrm{A}^{m+1} Y$, in other words the skew-symmetrizers $a_{n+1}$ and $a_{m+1}$ act trivially on $X^{\otimes n+1}$ and $X^{\otimes m+1}$. We will prove that $a_{nm+1}$ acts trivially on $(X\otimes Y)^{\otimes nm+1}$, from which the claim follows.

 We denote by $C_{s,t}$, for $t\le s$, the quotient of $\kk S_s$ by the two-sided ideal generated by $a_t$, where $S_t$ is interpreted as the subgroup of $S_s$ acting on the first $t$ symbols.

We have a commutative diagram
$$\xymatrix{
\kk S_{nm+1}\ar[rr]\ar[d]^{\Delta}&&\End((X\otimes Y)^{\otimes nm+1})\\
\kk S_{nm+1}\otimes_{\kk} \kk S_{nm+1}\ar[rr]&& \End(X^{\otimes nm+1})\otimes_{\kk}   \End(Y^{\otimes nm+1}),\ar[u]
}$$
where the horizontal arrows correspond to the braiding.

To prove that $a_{nm+1}$ acts trivially on $(X\otimes Y)^{\otimes nm+1}$, it is sufficient to prove that $a_{nm+1}$ is in the kernel of the composition
\begin{equation}\label{EqDelta}\kk S_{mn+1}\xrightarrow{\Delta} \kk S_{mn+1}\otimes_{\kk} \kk S_{mn+1}\twoheadrightarrow C_{mn+1,n+1}\otimes_{\kk} C_{mn+1,m+1}.\end{equation}

Now we apply the commutative diagram to the case where $\C$ is the category of vector spaces and $X,Y$ are vector spaces of dimension $n,m$. Then the right vertical arrow is an isomorphism and the image of the lower horizontal arrow is precisely $C_{mn+1,n+1}\otimes_{\kk} C_{mn+1,m+1}$, see \cite[Theorem~4.2]{DP}. Since $a_{mn+1}$ is in the kernel of the upper horizontal arrow, it must then indeed be in the kernel of \eqref{EqDelta}.
\end{proof}

\begin{lemma}\label{LemXY2}
\begin{enumerate}
\item For an object $W\in\C$ with a finite filtration, and with associated graded object $\mathrm{gr} W$, we have
$$\gd(W)=\gd(\mathrm{gr} W),\;\; \sd(W)\ge \sd(\mathrm{gr} W)\;\mbox{ and }\quad \ad( W)\ge \ad(\mathrm{gr} W).$$
The inequalities are equalities if $p=0$ and in Frobenius exact categories if $p>0$.
\item For objects $X,Y\in\C$, we have
\begin{eqnarray*}
&&\gd(X\oplus Y)\ge \gd(X)+\gd(Y),\;\;  \sd(X\oplus Y)\ge \sd(X)+ \sd(Y)\\
&&\mbox{and}\quad\; \ad(X\oplus Y)=\ad(X)+\ad(Y).
\end{eqnarray*}
\end{enumerate}
\end{lemma}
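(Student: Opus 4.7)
My strategy is to prove (2) directly from the defining formulas and (1) via the standard associated-graded trick, deferring the equality statements in (1) to arguments using the characteristic-zero or Frobenius-exact hypothesis.

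For (2), the equality $\ad(X\oplus Y)=\ad(X)+\ad(Y)$ is immediate from \eqref{eqAlt}: the upper bound is pigeonholing, and the lower bound comes from non-vanishing of $\mathrm{A}^{\ad X}X\otimes\mathrm{A}^{\ad Y}Y$, since rigidity forces tensor products of non-zero objects to be non-zero. For $\gd(X\oplus Y)\ge\gd(X)+\gd(Y)$, expand $(X\oplus Y)^{\otimes n}\cong\bigoplus_k\binom{n}{k}X^{\otimes k}\otimes Y^{\otimes n-k}$ and use the bound $\ell(A\otimes B)\ge\ell(A)\ell(B)$, which is proved by filtering $A$ with a composition series and noting via exactness of $(-)\otimes B$ and faithfulness (from rigidity) that each subquotient $(A_i/A_{i-1})\otimes B$ has length at least $\ell(B)$; a dominant-term estimate on the $n$-th root of the resulting binomial sum then yields the limit $\gd(X)+\gd(Y)$. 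For $\sd(X\oplus Y)\ge\sd(X)+\sd(Y)$, set $n=\sd(X)+\sd(Y)$, $a=\sd(X)$, $b=\sd(Y)$, and decompose $(X\oplus Y)^{\otimes n}=\bigoplus_{S\subseteq[n]}X^{\otimes S}\otimes Y^{\otimes\bar S}$. A putative kernel element $\alpha=\sum_\sigma c_\sigma\sigma$ acts block-by-block: the $(S,T)$-component involves only $\sigma$ with $\sigma(S)=T$, and grouping by the coset determined by the stabilizer $S_a\times S_b\subset S_n$ of $\{1,\ldots,a\}$ and choosing a representative reduces block vanishing to the vanishing of an element of $\kk(S_a\times S_b)$ in $\End(X^{\otimes a}\otimes Y^{\otimes b})$. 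The needed injectivity factors as $\kk S_a\otimes\kk S_b\hookrightarrow\End(X^{\otimes a})\otimes\End(Y^{\otimes b})\hookrightarrow\End(X^{\otimes a}\otimes Y^{\otimes b})$; the first map is immediate from the defining inequalities, while the second is a tensor-injection statement in the tensor category that I expect to be the main technical obstacle. It can be established from faithfulness of the tensor product functor $\C\bt\C\to\C$ when the Deligne product is available, or directly by a rigidity and partial-trace argument.

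For (1), fix a finite filtration $0=W_0\subset\cdots\subset W_r=W$. The induced multi-degree filtration on $W^{\otimes n}$ is $S_n$-stable with associated graded $(\mathrm{gr} W)^{\otimes n}$. Since length is preserved under passage to associated graded, $\gd(W)=\gd(\mathrm{gr} W)$ is immediate. Any $\alpha\in\kk S_n$, or the antisymmetrizer $a_n$, vanishing on $W^{\otimes n}$ also vanishes on the associated graded, giving the inequalities for $\sd$ and $\ad$.

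For the equality statements: in characteristic zero, $a_n^2=n!\,a_n$ with $n!\ne 0$ implies $e:=a_n/n!\in\kk S_n$ is an idempotent endomorphism of $W^{\otimes n}$. Idempotent images are direct summands and commute with passage to associated graded, so $e(W^{\otimes n})=0\iff e((\mathrm{gr} W)^{\otimes n})=0$, giving $\ad(W)=\ad(\mathrm{gr} W)$; the $\sd$-equality follows the same way using the central idempotents of $\kk S_n$ (semisimplicity in characteristic zero), since the annihilator of $W^{\otimes n}$ is determined by which central idempotents act as zero, and idempotent action matches between $W^{\otimes n}$ and $(\mathrm{gr} W)^{\otimes n}$. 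For Frobenius exact categories in characteristic $p$, the plan is to apply the exact Frobenius functor $\Fr$ from Section 3 to the filtration pieces and iterate, using exactness and the semisimplicity of the $\Ver_p$-factor to reduce to a split situation where part (2) applies. The main obstacle here is controlling how this reduction interacts with the $S_n$-equivariant structure, which is precisely where Frobenius exactness plays its role.
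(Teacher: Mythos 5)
Most of your argument is sound and close to the paper's. Part (2) is essentially the paper's proof: the $\gd$ and $\ad$ statements are handled the same way, and your block decomposition for $\sd$ is an explicit unwinding of the paper's one-line observation that injectivity of $\kk H\to\End(Z)$ implies injectivity of $\kk G\to\End(\Ind^G_HZ)$ for $H=S_a\times S_b<S_n$; both versions ultimately rest on the injectivity of $\End(A)\ot_\kk\End(B)\to\End(A\ot B)$, which you correctly flag and which the paper leaves implicit (it follows from the injectivity of $\Hom(\be,U)\ot\Hom(\be,V)\to\Hom(\be,U\ot V)$ in a tensor category, as in \cite{De}). The inequalities in part (1) are also the paper's filtration argument. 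Your characteristic-zero equality argument via idempotents (an idempotent that lowers a finite filtration is nilpotent, hence zero, plus semisimplicity of $\kk S_n$ so that the kernel of $\kk S_n\to \End(W^{\ot n})$ is a sum of blocks) is correct and genuinely different from the paper, which instead observes that condition \ref{Frex}(5) holds automatically in characteristic zero; your route is more elementary and self-contained.

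The genuine gap is the equality statement for Frobenius exact categories in characteristic $p$. What you offer there is a ``plan'' with an explicitly unresolved ``main obstacle,'' and the plan itself does not work as described: exactness of $\Fr$ does not split short exact sequences, and iterating $\Fr$ inside $\C$ (or tensoring with the semisimple category $\Ver_p$) will never split a non-split filtration --- e.g.\ $\Rep(\BZ/p)$ over $\kk=\overline{\mathbb F}_p$ is Frobenius exact but has non-split extensions that survive every Frobenius twist. The tool you need is the equivalent characterization in Proposition \ref{Frex}(5): Frobenius exactness gives a $\kk$-linear faithful exact symmetric monoidal functor $F:\C\to\cD$ that sends every short exact sequence of $\C$ to a split one. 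Then $F(W)\cong F(\mathrm{gr}\,W)$ in $\cD$ compatibly with the induced $S_n$-actions on $p$-th tensor powers, and faithfulness of $F$ shows that an element of $\kk S_n$ (respectively the skew-symmetrizer $a_n$) kills $W^{\ot n}$ if and only if it kills $(\mathrm{gr}\,W)^{\ot n}$, which is exactly the reduction ``to a split situation'' you were after. Without invoking \ref{Frex}(5) (or reproving it), this case of the lemma remains unproved in your write-up.
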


\begin{proof}
The case $\gd$ in part (1) is immediate. The inequalities for $\sd$ and $\ad$ follow from the following observation.
We have an induced filtration of $W^{\otimes n}$ and, by naturality of the braiding, the action of $\kk S_n$ preserves this filtration. Sending a morphism to its associated graded morphism yields a commutative diagram
$$\xymatrix{
\kk S_n\ar[rr]\ar[rrd]&&\End({\rm gr}W^{\otimes n})\\
&&\End_{\mathrm{filt}}(W^{\otimes n}),\ar[u]
}$$
where $\End_{\mathrm{filt}}$ denotes the set of filtration preserving endomorphisms. That we get an equality for Frobenius exact categories can be proved using Proposition~\ref{Frex}(5). The condition \ref{Frex}(5) always holds in characteristic zero, see \cite{De} or \cite[Remark~3.2.3(i)]{Co}.

Now we prove part (2) for $\gd$. For each $\epsilon>0$ and $Z\in\C$, there is a constant $C_Z(\epsilon)$, such that $d_n(Z)\ge C_Z(\epsilon) (\gd Z-\epsilon)^n$ for all $n$. It then follows that
$$d_n(X\oplus Y)\;\ge \sum_i \binom{n}{i} d_i(X)d_{n-i}(Y)\;\ge\; C_X(\epsilon)C_Y(\epsilon) (\gd X+\gd Y-2\epsilon)^n.$$
Taking $n$-th roots and then the limit yields 
$$\gd(X\oplus Y)\;\ge\; \gd X+\gd Y-2\epsilon.$$
Since this inequality is valid for all $\epsilon>0$, the claim follows. Part (2) for $\sd$ follows from the observation that if for an object $Z$ in $\Rep(H,\C)$ for finite groups $H<G$, we have that $\kk H\to \End(Z)$ is injective, then so is $\kk G\to\End(\mathrm{Ind}^G_HZ)$. The claim for $\ad$ follows from equation~\eqref{eqAlt}.
\end{proof}

Let $\mathcal{S}ym$ be the strict monoidal category, where the objects are labelled by $\BN$, the endomorphism algebra of $i\in\BN$ is $\kk S_i$ (with $\Hom(i,j)=0$ for $i\ne j$), the tensor product on objects is given by addition, and the tensor product on morphisms corresponds to the inclusion $\kk(S_i\times S_j)\hookrightarrow \kk S_{i+j}$.
\begin{lemma}\label{LemSym}
Assume that $\mathrm{char}(\kk)=p>0$. For every non-zero tensor ideal in $\mathcal{S}ym$, there exists $l\in\BN$, such that the symmetrizer $b_l:=\sum_{s\in S_l}s\in \kk S_l$ is in this ideal. The same is true for the skew-symmetrizers (if $p>2$).
\end{lemma}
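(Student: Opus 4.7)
My plan is to analyse any non-zero tensor ideal $I$ via the augmentation maps $\epsilon_n\colon\kk S_n\to\kk$, $\sigma\mapsto 1$. The key algebraic identity is $x\cdot b_n=\epsilon_n(x)\,b_n$, which follows directly from $\sigma b_n=b_n$ for every $\sigma\in S_n$. Consequently, if $I$ contains any element $x\in I(n)$ with $\epsilon_n(x)\neq 0$, then $b_n\in I(n)$ at once, and we are done. We may therefore assume $I$ is contained in the tensor ideal $\mathcal{J}$ defined by $\mathcal{J}(n)=\ker\epsilon_n$, which is a genuine tensor ideal because $\epsilon$ is multiplicative on external tensor products, i.e.\ $\epsilon_{n+m}(x\otimes y)=\epsilon_n(x)\epsilon_m(y)$.

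For the remaining situation, pick any nonzero $y\in I(n)$ and form $y\otimes 1_m\in I(l)$ for $l=n+m$, and let $K_l\subseteq\kk S_l$ denote the ordinary two-sided ideal of $\kk S_l$ generated by $y\otimes 1_m$. The plan is to show that for $l$ sufficiently large, $K_l$ contains $b_l$. Observe that $\epsilon_l(b_l)=l!=0$ as soon as $l\ge p$, so $b_l$ already lies in $\ker\epsilon_l$, consistent with $K_l\subseteq \ker\epsilon_l\subseteq I(l)\subseteq\mathcal{J}(l)$. The approach is to use conjugation by $S_l$ to generate a large family of elements $g(y\otimes 1_m)g^{-1}$ inside $K_l$, exploit positive-characteristic cancellations (e.g.\ $\binom{l}{2}\equiv 0\pmod p$ for $l=p$, so that appropriate sums of transpositions survive) and then form products and $\kk$-linear combinations to manufacture $b_l$ inside $K_l$. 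A convenient reduction is the fact that $\ker\epsilon_l$ is generated as a two-sided ideal by $1-(12)$, via conjugacy of transpositions together with the standard generation of augmentation ideals of group algebras.

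I expect this combinatorial production of $b_l$ inside $K_l$ to be the main obstacle. It has to succeed uniformly for every nonzero $y\in\ker\epsilon_n$, including the case where $y$ also has vanishing sign-augmentation $\tilde\epsilon_n(y)=\sum_\sigma c_\sigma\mathrm{sgn}(\sigma)=0$, where the naive tricks of applying $b_n$ or $a_n$ on the right both collapse to zero. A natural line of attack is induction on the complexity of $y$ (for instance the number of permutations appearing in $y$ with nonzero coefficient), using the freedom to vary $m$ so that $S_l$ has enough room to produce the necessary cancellations.

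Finally, the parallel statement for skew-symmetrizers when $p>2$ follows formally by applying the sign-twist $\sigma\mapsto\mathrm{sgn}(\sigma)\sigma$, which is a monoidal (though not symmetric monoidal) auto-equivalence of $\Sym$. It preserves the class of tensor ideals and interchanges $b_l\leftrightarrow a_l$, so the two halves of the lemma are equivalent and only the statement for $b_l$ needs to be proved.
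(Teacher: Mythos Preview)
Your reduction from skew-symmetrizers to symmetrizers via the sign-twist $\sigma\mapsto\mathrm{sgn}(\sigma)\sigma$ is exactly what the paper does, and your reformulation of the main step---show that for some $l\ge n$ the two-sided ideal $K_l$ of $\kk S_l$ generated by $y\otimes 1_{l-n}$ contains $b_l$---is correct and equivalent to the paper's formulation. However, you have not proved this step; you explicitly flag it as ``the main obstacle'' and offer only heuristics. The observation that $\ker\epsilon_l$ is generated by $1-(12)$ does not help: you need the possibly much smaller ideal generated by your specific $y\otimes 1_{l-n}$ to reach $b_l$, and nothing you have written forces this. The suggested induction on the support of $y$ is not carried out, and it is far from clear that naive combinatorics with conjugates and characteristic-$p$ cancellations can be made to work uniformly.

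The paper fills exactly this gap with a nontrivial input from modular representation theory of symmetric groups, proved separately in Appendix~A (Proposition~\ref{PropSasha}): for every $n$ there exists $m\ge n$ such that $\mathrm{Res}^{S_m}_{S_n}P_0$ is a faithful $\kk S_n$-module, where $P_0$ is the projective cover of the trivial $\kk S_m$-module. The proof of that proposition uses branching rules and the combinatorics of $p$-regular partitions and $p$-cores to show that every simple $\kk S_n$-module occurs in this restriction. Granting it, the lemma is immediate: embed $P_0$ as a left ideal of $\kk S_m$; faithfulness gives $g\in P_0$ with $fg\ne 0$ (where $f$ is your $y\otimes 1_{m-n}$), and since $fg\in P_0$ and $P_0$ has simple socle spanned by $b_m$, the nonzero submodule $\kk S_m\cdot fg\subseteq P_0$ contains $b_m$, so $b_m=hfg$. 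In fact your desired statement ``$b_l\in K_l$'' is \emph{equivalent} to ``$y\otimes 1_{l-n}$ does not annihilate $P_0$'', so you were aiming at precisely the right target but did not supply the representation-theoretic argument that hits it. (Incidentally, the initial case split on $\epsilon_n(x)$ is unnecessary; the faithfulness argument handles all nonzero elements uniformly.)
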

\begin{proof}
Let $0\not=f\in \kk S_n$ be an element of a non-zero tensor ideal $J$. Let $m\in\BN$ be as in Proposition~\ref{PropSasha} in the appendix. Since $J$ is a tensor ideal, $f$ as interpreted in $\kk S_m$ is still in $J$. Let $P_0$ denote the projective cover of the trivial $\kk S_m$-module, which we interpret as a direct summand of the regular representation. Now there exists $g\in P_0\subset \kk S_m$ such that $fg\not=0$. Since $P_0$ has simple socle spanned by $b_m$, we find that there exists $h\in \kk S_m$ such that $b_m=hfg\in J$. 

For the skew-symmetrizers, we can apply the auto-equivalence of $\Sym$ which is the identity on objects and sends every transposition $\sigma$ in a symmetric group to ${\rm sign}(\sigma)\sigma$.
\end{proof}

\begin{proof}[Proof of Proposition~\ref{Propbcd}]
Part (1) follows by definition. For part (2), we assume that $\ell(X)\ge l$ for some $l\in \BN$. Then $[X\otimes X^\ast:\be]\ge l$. Consider a filtration of $Y:=X\otimes X^\ast$ such that the associated graded ${\rm gr}Y$ contains $\be^l$ as a direct summand. Then we can apply Lemma~\ref{LemXY2}(1).

For part (3) assume that $\sd(X)=\infty$, then 
$$n!=\dim_{\kk} \kk S_n\le\dim_{\kk} \End(X^{\otimes n})=\dim_{\kk}\Hom(\be,Y^{\otimes n}),$$
with $Y=X\otimes X^\ast$. In particular the length of the socle of $Y^{\otimes n}$ is at least $n!$, so
$$\gd(Y) \ge \lim_{n\to \infty}(n!)^{1/n}=\infty,$$
which concludes the proof.

Part (4) follows immediately from both statements regarding $\ad$ in Lemma~\ref{LemXY2}.

 By part (4) and Lemma~\ref{LemXY}, we have
$$\gd(X)\le\lim_{n\to\infty}\ad(X^{\otimes n})^{\frac{1}{n}}\le \ad(X),$$
proving part (5).

Now assume that $\mathrm{char}(\kk)=p>0$. The condition $\sd(X)<\infty$ is equivalent to saying that the symmetric monoidal functor $\Sym\to\C$ which sends the generating object in $\Sym$ to $X$, and realizes the braiding morphisms on the powers of $X$, is not faithful. By Lemma~\ref{LemSym} we then find that indeed $\ad(X)<\infty$.
\end{proof}

\begin{corollary}\label{CorChain}
In a STC $\C$ over a field $\kk$ of positive characteristic, the following conditions are equivalent
\begin{enumerate}
\item $\C$ has moderate growth (so is in particular pre-Tannakian);
\item $\gd(X)<\infty$ for every $X\in\C$;
\item $\sd(X)<\infty$ for every $X\in\C$;
\item $\ad(X)<\infty$ for every $X\in\C$.
\end{enumerate}
\end{corollary}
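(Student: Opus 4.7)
The plan is to deduce the equivalence of $(1)$--$(4)$ as a short cycle $(1)\Leftrightarrow(2)\Rightarrow(3)\Rightarrow(4)\Rightarrow(2)$, with almost all the real content already absorbed into Proposition~\ref{Propbcd}.

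First I would prove $(1)\Leftrightarrow(2)$ directly from the definitions. If $\C$ has moderate growth, then for each $X$ there exists $a\in\mathbb{R}$ with $d_n(X)=\ell(X^{\otimes n})\le a^n$ for all $n$, so $\gd(X)\le a<\infty$; in particular $\ell(X)\le a$, which shows that every object has finite length and hence $\C$ is pre-Tannakian. Conversely, since $\gd(X)=\sup_{n\ge 1}d_n(X)^{1/n}$ by the Fekete-type argument already recorded in the preliminaries, finiteness of $\gd(X)$ gives $d_n(X)\le \gd(X)^n<\infty$ for every $n$, which is precisely the moderate-growth bound.

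Next, $(2)\Rightarrow(3)$ is immediate: applying the hypothesis to the object $X\otimes X^{\ast}$ gives $\gd(X\otimes X^{\ast})<\infty$, and Proposition~\ref{Propbcd}(3) then yields $\sd(X)<\infty$. The implication $(3)\Rightarrow(4)$ is exactly Proposition~\ref{Propbcd}(6), and $(4)\Rightarrow(2)$ is Proposition~\ref{Propbcd}(5).

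The hypothesis $\mathrm{char}(\kk)>0$ enters only in $(3)\Rightarrow(4)$, and this is the sole step that is not purely formal: Proposition~\ref{Propbcd}(6) rests on Lemma~\ref{LemSym}, which itself invokes the modular representation-theoretic input supplied by Kleshchev in Appendix~A. From the perspective of this corollary, the main obstacle has therefore already been surmounted upstream, and what remains here is simply to line up the implications.
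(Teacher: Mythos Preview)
Your proof is correct and follows essentially the same route as the paper's own proof: the equivalence $(1)\Leftrightarrow(2)$ is taken as definitional (the paper simply says ``follows by definition'' where you spell out the Fekete-lemma consequence $\gd(X)=\sup_n d_n(X)^{1/n}$), and the chain $(2)\Rightarrow(3)\Rightarrow(4)\Rightarrow(2)$ is obtained from Proposition~\ref{Propbcd}(3), (6), (5) in exactly the same order. Your closing remark correctly identifies where the characteristic hypothesis enters and traces its dependence on Lemma~\ref{LemSym} and Appendix~A.
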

\begin{proof}
That (1) and (2) are equivalent follows by definition. That (2) implies (3) follows from Proposition~\ref{Propbcd}(3). That (3) implies (4) follows from Proposition~\ref{Propbcd}(6). That (4) implies (2) follows from Proposition~\ref{Propbcd}(5).
\end{proof}	

\begin{remark}\label{Remchar0}
If char$(\kk)=0$, it is still true that (1), (2) and (3) are equivalent in Corollary~\ref{CorChain} and that (4) implies those equivalent conditions. That (2) and (3) are equivalent is precisely \cite[Proposition~0.5]{Del02} and the other claims follow from Proposition~\ref{Propbcd}.
In this case (1), (2), (3) do not imply (4), see Example~\ref{ExVect}.
\end{remark}

\section{Key Lemma}

In this section, we assume that char$(\kk)=p>0$, and we prove that $\Fr_+^{(j)}\simeq \Fr_+^j$ on a Frobenius exact pre-Tannakian category $\C$. Recall that $\Fr^{(j)}_+$ is defined in Section~\ref{SecEnh} on any abelian $\kk$-linear symmetric monoidal category and $\Fr^j_+$ is the composition of $j$ factors $\Fr_+=\Fr_+^{(1)}$.
For every $X\in\C$ and $j>1$, we know that $\Fr_+^{(j)}X$ is a subquotient of $\Fr_+^jX$, see \cite[Lemma~4.1.7]{Co}. To prove the isomorphism between both functors it suffices to show that this subquotient equals the entire object. 



\begin{lemma}\label{ThmFrj}
Let $\C$ be a Frobenius exact pre-Tannakian category, then $\Fr_+^{(j)}\simeq \Fr_+^j$ for all $j>1$.
\end{lemma}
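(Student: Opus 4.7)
The plan is to proceed by induction on $j\ge 2$, with the main work in the base case $j=2$, which asserts $\Fr_+^{(2)} X \simeq \Fr_+(\Fr_+ X)$ for every $X\in\C$. For this base case, I unwind both sides: $\Fr_+^{(2)} X$ is the image of $\Gamma^{p^2} X\to S^{p^2} X$, while $\Fr_+^2 X$ is the image of $\Gamma^p(\Fr_+ X)\to S^p(\Fr_+ X)$. The chain of subgroup inclusions $S_p^{\times p}\subset S_p\wr S_p\subset S_{p^2}$ produces a canonical factorization
\[
\Gamma^{p^2} X\;\hookrightarrow\;\Gamma^p(\Gamma^p X)\;\longrightarrow\; S^p(S^p X)\;\twoheadrightarrow\; S^{p^2} X,
\]
and combining this with the left, respectively right, exactness of $\Gamma^p$ and $S^p$ applied to $\Gamma^p X\twoheadrightarrow \Fr_+ X\hookrightarrow S^p X$, one recovers precisely the subquotient relation $\Fr_+^{(2)}X\subseteq\Fr_+^2 X$ underlying \cite[Lemma~4.1.7]{Co}.

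The core step is to upgrade this subquotient relation to an isomorphism, and here the Frobenius exactness hypothesis is crucial via Proposition~\ref{Frex}: the enhanced Frobenius functor $\Fr^{en}:\C\to\C\boxtimes\Ver_p^{en}$ is exact and faithful. Being an exact symmetric tensor functor, $\Fr^{en}$ commutes with $\Gamma^n$, $S^n$, image formation, and hence with both $\Fr_+^{(j)}$ and $\Fr_+$, and so it transports the subquotient map $\Fr_+^2X\to\Fr_+^{(2)}X$ to the analogous map applied to $\Fr^{en} X$. The strategy is then to chase the commutative squares relating $\Gamma^p(\Gamma^p X)\to\Gamma^p(\Fr_+ X)$ and $S^p(\Fr_+ X)\to S^p(S^p X)$, exploiting exactness of $\Fr_+$ to argue that the kernel and cokernel of the subquotient map both vanish. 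The inductive step is analogous, replacing the wreath $S_p\wr S_p$ by $S_p\wr S_{p^{j-1}}$ and using the hypothesis $\Fr_+^{(j-1)}\simeq\Fr_+^{j-1}$ to identify the intermediate image with $\Fr_+(\Fr_+^{j-1}X)=\Fr_+^j X$.

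The main obstacle is controlling how $\Gamma^p$ and $S^p$, which are only one-sided exact in general, interact with the surjection $\Gamma^p X\twoheadrightarrow\Fr_+ X$ and the inclusion $\Fr_+ X\hookrightarrow S^p X$: naively these images do not match up. The decisive point is that the composite construction $\Gamma^p(-)\to S^p(-)$ defining $\Fr_+$ is a direct summand of the exact functor $\Fr^{en}$ (Section~\ref{SecEnh}) and therefore behaves two-sided exactly, so that after passing to images the apparent mismatches cancel. Should a purely direct diagram chase prove elusive, faithful exactness of $\Fr^{en}$ provides a safety net: one transports the question into $\C\boxtimes\Ver_p^{en}$, iterates until all the relevant action happens in the semisimple second factor $\Ver_p^{en}=\Ver_p^+\boxtimes\Rep(\BZ/2(p-1),z)$, and verifies the desired equality of subquotients on simples via Example~\ref{ExFrFun} and \cite[Proposition~4.1]{EOs}.
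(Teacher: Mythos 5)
Your setup (the factorization through $\Gamma^p(\Gamma^p X)\to S^p(S^pX)$ and the resulting subquotient relation from \cite[Lemma~4.1.7]{Co}) is fine, but the step that upgrades the subquotient to an isomorphism --- which is the entire content of the lemma --- is not actually carried out. The claimed ``decisive point'', that $\Fr_+$ is a direct summand of the exact functor $\Fr^{en}$ and therefore ``behaves two-sided exactly, so that after passing to images the apparent mismatches cancel'', is not an argument: exactness of $\Fr_+$ as a functor says nothing about how the genuinely one-sided-exact functors $\Gamma^p$ and $S^p$ interact with the maps $\Gamma^pX\twoheadrightarrow\Fr_+X\hookrightarrow S^pX$, and that interaction is exactly where the difficulty lives. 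Concretely, $\Gamma^p(\Gamma^pX)\to\Gamma^p(\Fr_+X)$ need not be surjective and $S^p(\Fr_+X)\to S^p(S^pX)$ need not be injective, and neither exactness of $\Fr_+$ nor faithfulness of $\Fr^{en}$ repairs this by a diagram chase in $\C$. The ``safety net'' also fails: $\Fr^{en}$ lands in $\C\boxtimes\Ver_p^{en}$, whose first factor is still $\C$, so iterating it never reduces the question to a computation on simple objects of a semisimple category.

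The mechanism the paper uses is different and is the one you are missing: Frobenius exactness enters through Proposition~\ref{Frex}(5), i.e. a faithful exact symmetric monoidal functor $F:\C\to\cD$ that splits every short exact sequence coming from $\C$. Working in $\cD$, one can therefore choose a genuine $S_p$-equivariant direct sum decomposition $X^{\otimes p}=\Fr_+X\oplus Y$ with $Y\supseteq\Ker(X^{\otimes p}\to S^pX)$ and $\Triv_{S_p}Y=0$. Taking the $p^{j-1}$-th tensor power and using elementary properties of the functor $\Triv_G$ (it vanishes on objects induced from subgroups of index divisible by $p$, and it is unchanged when passing to a subgroup containing the normalizer of a Sylow $p$-subgroup) kills the cross terms and the $Y^{\otimes p^{j-1}}$ term, yielding $\Triv_{S_{p^j}}(X^{\otimes p^j})\simeq\Triv_{S_{p^{j-1}}}\bigl((\Fr_+X)^{\otimes p^{j-1}}\bigr)=\Fr_+^{(j-1)}(\Fr_+X)$, and the claim follows by iteration. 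Your wreath-product factorization is in the right spirit --- the subgroup $H\ltimes L\simeq S_{p^{j-1}}\ltimes S_p^{\times p^{j-1}}$ does appear in the paper's proof --- but without the splitting property and the $\Triv$ calculus the argument does not close.
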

\begin{proof}
By Proposition~\ref{Frex}(5), there exists a $\kk$-linear exact faithful symmetric monoidal functor $F:\C\to\cD$, to an abelian $\kk$-linear symmetric monoidal category $\cD$, so that $F$ splits every short exact sequence. The tensor product on $\cD$ is $\kk$-linear, but not necessarily exact (although it can always be chosen right exact).

 Since $\Fr_+^{(j)}$ and $\Fr_+^j$ intertwine exact symmetric monoidal functors, see \cite[\S 4.1]{Co}, it suffices to prove that  $\Fr_+^{(j)}(F(X_0))$ equals $\Fr_+^j(F(X_0))$ for every $X_0\in\C$. We thus henceforth work in $\cD$ and set $X=F(X_0)$. We use the notation from \cite{Co}, where for a finite group $G$ and an object $M\in\Rep(G,\cD)$, we denote by $\Triv_G(M)$ the image of the $G$-invariants in the $G$-coinvariants. In particular, for $V\in \Rep G$, $\Triv_G(V)$ is the maximal trivial direct summand, and $\Triv_{S_p}(X^{\otimes p})=\Fr_+X$. We will use the following observations, see \cite[2.2.3(i), 2.2.4(i) and 2.2.8]{Co} for a subgroup $G_1<G$:
 \begin{enumerate}
 \item[(i)] If $p$ divides $[G:G_1]$, then $\Triv_G\circ\Ind^G_{G_1}=0$;
 \item[(ii)] If $\Triv_{G_1}M=0$, then $\Triv_{G}M=0$;
\item[(iii)] If $G_1$ contains the normalizer of a Sylow $p$-subgroup of $G$, then $\Triv_G\simeq \Triv_{G_1}$.
 \end{enumerate}

 Denote by $N\subset X^{\otimes p}$ the kernel of $X^{\otimes p}\twoheadrightarrow S^pX$. Then $\Fr_+X$ is isomorphic to the quotient of $\Gamma^pX$ by $\Gamma^pX\cap N$, or equivalently we have a short exact sequence
 $$0\;\to\; N\;\to\; N+\Gamma^pX \;\to\; \Fr_+X\;\to\; 0.$$
This sequence splits, as it is in the image of $F$, and for the same reason $(N+\Gamma^pX )\hookrightarrow X^{\otimes p}$ splits. This allows us to choose a decomposition
$$X^{\otimes p}\;=\; \Fr_+X\oplus Y,\quad\mbox{with} \; Y\simeq  N\oplus X^{\otimes p}/(\Gamma^pX+N), $$
where $Y\subset X^{\otimes p}$ contains $N\subset X^{\otimes p}$, and therefore $Y$ is an $S_p$-subrepresentation. Hence this is a decomposition in $\Rep(S_p,\cD)$, where $\Fr_+X$ is endowed with the trivial $S_p$-action and $\Triv_{S_p}Y$ is zero. Taking the $p^{j-1}$-th tensor power of this equation yields
$$X^{\otimes p^j}\;\simeq\; (\Fr_+X)^{\otimes p^{j-1}}\oplus Z\oplus Y^{\otimes p^{j-1}},$$
where we collect all `mixed terms' in $\Fr_+X$ and $Y$ into the object $Z$. 

We consider the subgroup $H\ltimes L$ of $S_{p^j}$, where $L$ is the Young subgroup $S_p^{\times p^{j-1}}$ of $S_{p^j}$, generated by all permutations $(i,i+1)$ with $i$ not divisible by $p$, and $H$ is the copy of $S_{p^{j-1}}$ inside $S_{p^j}$ of $\sigma\in S_{p^j}$ for which $\sigma(ap+i)=\sigma(ap+1)+i-1$ for all $0< i\le p$ and $0\le a< p^{j-1}$.
The above decomposition is then a decomposition in $\Rep(H\ltimes L,\cD)$. In particular, with $d:=p^{j-1}$, as an $H$-representation $Z$ is
$$Z\;=\;\bigoplus_{i=1}^{d-1}\Ind^{S_{d}}_{ S_i\times S_{d-i}}\left((\Fr_+X)^{\otimes i}\otimes Y^{\otimes d-i}\right).$$
It then follows from the above property (i) that $\Triv_{H}Z=0$, so in particular $\Triv_{H\ltimes L}Z=0$ by property (ii).
Next, we claim that
$$\Triv_L(Y^{\otimes p^{j-1}})\;\simeq\; ( \Triv_{S_p}Y)^{\otimes p^{j-1}}=0.$$
Indeed, the right equality is clear. The left isomorphism follows since we can compute it in $\C$ (as $Y$ is in the essential image of $F$) where the tensor product is exact. Subsequently, (ii) also implies that $\Triv_{H\ltimes L}(Y^{\otimes p^{j-1}})=0$.


Since $L$ acts trivially on $(\Fr_+X)^{\otimes p^{j-1}}$, we can thus conclude that
$$\Triv_{H\ltimes L}(X^{\otimes p^j})\;\simeq\;\Triv_H((\Fr_+X)^{\otimes p^{j-1}})=\Fr_+^{(j-1)}(\Fr_+ X).$$

Now $H\ltimes L<S_{p^j}$ contains $Q_j$ from \cite[\S 1.1]{Co} as a subgroup. As it is observed {\it loc. cit.} that the latter contains the normalizer of a Sylow subgroup of $S_{p^j}$, it follows from (iii) that 
$$\Triv_{H\ltimes L}(X^{\otimes p^j})=\Fr_+^{(j)}(X).$$

We conclude by iteration that $\Fr_+^{(j)}(X)\simeq \Fr_+^jX$.
 \end{proof}


\begin{remark}
The property $\Fr_+^{(j)}\simeq \Fr_+^j$ fails without the assumption that $\C$ be Frobenius exact. Indeed, for $p=2$, one can verify this in the category $\Ver_8^+=\C_3\subset\Ver_8$ from \cite{BE, BEO}, see Remark~\ref{RemMainThm}(1). Concretely, for $X$ the length 2 object with socle $\be$ and top the other simple object, we have $\Fr_+^2X=\be$ and $\Fr_+^{(2)}X=0$.
\end{remark}


\begin{theorem} \label{ThmTan2}
A pre-Tannakian category $\C$ is Tannakian if and only if the following conditions hold:

1) $\C$ is Frobenius exact;

2) The functor $\Fr_+$ is monoidal; 

3) $\C$ is of moderate growth.
\end{theorem}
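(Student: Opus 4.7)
\emph{The easy direction.} If $\C \simeq \Rep G$ for an affine group scheme $G$, the fiber functor $F:\C\to\Ve$ is faithful, exact and symmetric monoidal. Since $\Ve$ is semisimple, $F$ verifies condition (5) of Proposition~\ref{Frex}, giving (1). Moderate growth (3) follows from $\ell_{\C}(X^{\otimes n}) \le (\dim_{\kk} F(X))^n$. For (2), $\Fr_+$ commutes with $F$ by naturality (compare the commutative diagram in Section~\ref{SecFrComm}), and on $\Ve$ it coincides with the usual Frobenius twist $V\mapsto V^{(1)}$, which is a tensor functor and in particular monoidal; since $F$ is faithful exact monoidal and therefore reflects isomorphisms, the canonical lax monoidal structure on $\Fr_+^{\C}$ is a genuine monoidal structure.

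\emph{The hard direction.} Assume (1), (2), (3). By the equivalent reformulation given at the end of Section~\ref{SecEnh}, condition (2) says that $\Fr^{en}$ factors through $\C \boxtimes \Ve \simeq \C$. Together with (1), which by Proposition~\ref{Frex} ensures exactness of $\Fr^{en}$ (hence of $\Fr_+$), this realizes $\Fr_+:\C\to\C$ as a $\Frob$-linear symmetric exact tensor endofunctor of $\C$. Apply now the Key Lemma~\ref{ThmFrj} (which uses Frobenius exactness): for every $j\ge 1$ we have $\Fr_+^{(j)}\simeq \Fr_+^{\,j}$ as lax monoidal functors, and since a composition of tensor functors is a tensor functor, each $\Fr_+^{(j)}$ acquires the structure of a $\Frob^j$-linear symmetric exact tensor endofunctor of $\C$.

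The plan is then to conclude by invoking the Tannakian criterion proved in \cite{Co} (in the spirit of the equivalences in Theorem~C of op.\,cit.): a Frobenius exact pre-Tannakian category of moderate growth is Tannakian if and only if the canonical lax monoidal structures on all iterated Frobenius functors $\Fr_+^{(j)}$ are genuinely monoidal. Since we have just established this hypothesis, the criterion produces the affine group scheme $G$ with $\C\simeq\Rep G$, completing the proof.

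\emph{Main obstacle.} The crux of the argument is the final Tannakian reconstruction: from the compatible tower of $\Frob^j$-linear symmetric tensor endofunctors $\Fr_+^{(j)}$ together with moderate growth, one must extract a fiber functor $F:\C\to\Ve$ so that $\C\simeq\Rep\bigl(\underline{\mathrm{Aut}}_{\otimes}(F)\bigr)$. This is the content imported from \cite{Co}. A self-contained substitute would translate the coherent monoidal data on the $\Fr_+^{(j)}$ into algebraic compatibilities on the cotriangular copseudo-Hopf algebra $H$ presenting $\C$ (in the sense of Section~\ref{SecSTC}), and use moderate growth as the necessary finiteness hypothesis to trivialize the quasi-associator, so that $H$ becomes a genuine commutative Hopf algebra and $\C\simeq \Rep(\mathrm{Spec}\,H^{\vee})$.
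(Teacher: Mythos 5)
Your proof follows essentially the same route as the paper: the easy direction is standard, and for the converse both arguments use Lemma~\ref{ThmFrj} to upgrade each $\Fr_+^{(j)}$ to a symmetric tensor functor and then appeal to the Tannakian recognition criterion of \cite{Co}. The only divergence is in how that criterion is quoted: the paper invokes \cite[Theorems~5.1.2 and~6.1.1]{Co} in the form ``$\C$ is Tannakian provided $\ad(X)<\infty$ for all $X$ and $\mathrm{A}^n\Fr_+^{(j)}(X)=0$ implies $\Fr_+^{(j)}(\mathrm{A}^nX)=0$''; the second condition is automatic once $\Fr_+^{(j)}$ is a tensor functor, and the first is exactly where moderate growth enters, via the equivalence of (1) and (4) in Corollary~\ref{CorChain}, so your bundled version of the criterion should be unpacked into these two checks rather than cited as a single statement.
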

\begin{proof}
It is well-known that Tannakian categories satisfy these conditions. By \cite[Theorems~5.1.2 and~6.1.1]{Co}, a tensor category $\C$ is Tannakian if for every $X\in\C$, we have (a) $\ad(X)<\infty$ and (b) if $\mathrm{A}^n\Fr^{(j)}_+(X)=0$ then $\Fr^{(j)}_+(\mathrm{A}^nX)=0$, for all $j,n\in\BZ_{>0}$. Under assumptions 1) and 2), condition (b) is trivially satisfied since, by Lemma~\ref{ThmFrj}, $\Fr^{(j)}_+$ is a symmetric tensor functor. The conclusion thus follows from the equivalence between (1) and (4) in Corollary~\ref{CorChain}. 
\end{proof}

If $p=2$, $\Fr_+=\Fr$ is always monoidal and Theorem~\ref{ThmTan2} becomes:
\begin{corollary}\label{peq2}
Assume $p=2$. A pre-Tannakian category $\C$ is Tannakian if and only if the following conditions hold:

1) $\C$ is Frobenius exact;

2) $\C$ is of moderate growth.
\end{corollary}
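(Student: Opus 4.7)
The plan is to deduce the corollary directly from Theorem~\ref{ThmTan2} by showing that when $p=2$, condition~2) of that theorem (that $\Fr_+$ is monoidal) is automatic, so the remaining hypotheses reduce to exactly conditions~1) and~2) of the corollary.

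For the ``only if'' direction, I would simply recall that a Tannakian category $\C = \Rep G$ admits a faithful exact $\kk$-linear symmetric monoidal fiber functor $F:\C\to\Ve$, which splits every short exact sequence, whence $\C$ is Frobenius exact by Proposition~\ref{Frex}(5); moderate growth follows from $\ell(X^{\otimes n}) \le (\dim F(X))^n$.

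For the ``if'' direction, I would argue in three steps. First, when $p=2$ we have $S_p = C_p$, so the constructions in Section~\ref{Frob1} of $\Fr^{en} = \Fr_{S_p}$ and $\Fr = \Fr_{C_p}$ produce literally the same functor. Second, $\Ver_2^{en} = \overline{\Rep S_2}$ is just $\Ve$: in characteristic~$2$ the only non-projective indecomposable $S_2$-module is the trivial one, and the $2$-dimensional regular representation is negligible in $\overline{\Rep S_2}$. Hence $\C \bt \Ver_2^{en} \simeq \C$ canonically, and the defining formula $\Fr_+ = (\Id \bt \Hom(\be,?))\circ \Fr^{en}$ from Section~\ref{SecEnh} collapses to $\Fr_+ = \Fr^{en} = \Fr$. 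Third, $\Fr$ is a symmetric monoidal functor by its very construction (as a composition of the equivariantization $X \mapsto X^{\otimes p}$, identification via the copseudo-Hopf algebra, and the semisimplification $\Rep C_p \to \Ver_p$ applied at the comodule level), so its induced monoidal structure on $\Fr_+$ is automatically strong, not merely lax.

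This verifies condition~2) of Theorem~\ref{ThmTan2} from the hypotheses of the corollary, so an appeal to Theorem~\ref{ThmTan2} completes the proof. I expect no real obstacle here: the content is essentially the degeneration $\Ver_2 = \Ver_2^{en} = \Ve$ in characteristic~$2$, which forces $\Fr_+$ to coincide with the tautologically symmetric monoidal functor $\Fr$.
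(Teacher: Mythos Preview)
Your proposal is correct and follows exactly the paper's own approach: the paper simply observes (in the sentence immediately preceding the corollary) that when $p=2$ one has $\Fr_+=\Fr$, which is always monoidal, so condition~2) of Theorem~\ref{ThmTan2} is automatic. Your expansion of why $\Fr_+=\Fr$ holds (via $S_2=C_2$ and $\Ver_2=\Ve$) is accurate and merely unpacks what the paper leaves implicit.
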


\section{Stabilization}
Assume $p>0$.
Consider a Frobenius exact pre-Tannakian category $\C$ equipped with a symmetric tensor functor $I:\Ver_p\to \C$ (which is automatically fully faithful). Then we can construct a $\Frob$-tensor functor, see Section~\ref{SecSTC}, as the composition
\begin{equation}\label{ItFr}
\Fr[I]:\;\,\C \xrightarrow{\Fr} \C\boxtimes\Ver_p \xrightarrow{(\id, I)}  \C.
\end{equation}

\begin{definition}\label{DefPerfect}
\begin{enumerate}
\item We say that $(\C,I)$ is {\bf reduced} if $\Fr[I]$
is fully faithful.
\item We say that $(\C,I)$ is {\bf perfect} if $\Fr[I]$
is an equivalence (of additive monoidal categories)
\end{enumerate}
\end{definition}
The above terminology is motivated in Remark~\ref{RemTerm} below.
Clearly, a necessary condition for $(\C,I)$ to be reduced (or perfect) is that the base field $\kk$ be perfect.

\begin{example}\label{VerPerf}
$(\Ver_p,\id)$ is perfect if $\kk$ is perfect. Indeed, by Example~\ref{ExFrFun}(i) we have $\Fr[\id]\simeq\id$ (as additive functors).
\end{example}

For the next theorem we need a construction of 2-colimits of categories (in the special case when the indexing
category is a filtrant category associated with the partially ordered set $\BZ_{\ge 0}$), see e.g. \cite{DS}. We recall
this construction now. Let us consider a chain of categories and functors
\begin{equation}\label{chainC}\C_0\xrightarrow{F_0}\C_1\xrightarrow{F_1}\C_2\xrightarrow{F_2}\cdots.\end{equation}
By abuse of notation we will denote by $F^r$ the composition $F_{a+r-1}\circ \cdots \circ F_{a+1}\circ F_{a}:
\C_a\to \C_{a+r}$ for any $a,r\in \BZ_{\ge 0}$; in particular $F^0=\Id$. For any objects $X,Y \in \C_a$ we have an inductive system
of sets
$$\Hom_{\C_a}(X,Y)\xrightarrow{F} \Hom_{\C_{a+1}}(FX,FY)\xrightarrow{F} \Hom_{\C_{a+2}}(F^2X,F^2Y)\xrightarrow{F} \cdots$$
and let $\varinjlim \Hom(X,Y)$ denote its limit. Thus any element $f\in \varinjlim \Hom(X,Y)$ can be represented
by some $\widetilde f\in \Hom_{\C_{a+r}}(F^rX,F^rY)$ and $F^k(\widetilde f)$ represents the same element of $\varinjlim \Hom(X,Y)$
for any $k\in \BZ_{\ge 0}$. In particular, for any $f\in \varinjlim \Hom(X,Y)$ we can always find a representative $\widetilde f\in \Hom_{\C_{a+r}}(F^rX,F^rY)$ with $r$ greater than any given bound.

We now define a new category $\varinjlim \C_i$ as follows:

1) The class of objects of  $\varinjlim \C_i$ is the disjoint union $\sqcup_i\mathrm{Ob}\C_i$;

2) For $X\in \C_a$ and $Y\in \C_b$ we define 
$$\Hom(X,Y)=\left\{ \begin{array}{cc}\varinjlim \Hom(F^{b-a}X,Y)&\; \mbox{if}\; b\ge a,\\
\varinjlim \Hom(X,F^{a-b}Y)&\; \mbox{if}\; a\ge b.\end{array}\right.
$$
The composition of morphisms $f\in \Hom(X,Y)$ and $g\in \Hom(Y,Z)$ is defined as follows:
for sufficiently large $s\in \BZ_{\ge 0}$
find representatives $\widetilde f\in \Hom_{\C_{b+s}}(F^rX,F^sY)$ and $\widetilde g\in \Hom_{\C_{b+s}}(F^sY,F^tZ)$ of $f$ and $g$ 
and define $g\circ f$ to be represented by $\widetilde g\circ \widetilde f$.

3) We have obvious functors $\C_i \to \varinjlim \C_i$ sending any object to itself and any morphism 
$\widetilde f \in \Hom_{\C_i}(X,Y)$ to the morphism $f\in \Hom(X,Y)$ represented by $\widetilde f$.   

Here are some general remarks:

4) Assume that the categories $\C_i$ and the functors $F_i$ are additive. Then it is clear that $\varinjlim \C_i$
is also additive; also the functors $\C_i \to \varinjlim \C_i$ are additive.
 If the categories $\C_i$ are abelian and the functors $F_i$ are exact then the category
$\varinjlim \C_i$ is abelian: for example the kernel of a morphism $f\in \Hom(X,Y)$ represented by $\widetilde f\in \Hom_{\C_{b+s}}(F^rX,F^sY)$ is the kernel of $\widetilde f$ considered as an object of $\varinjlim \C_i$;
the functors $\C_i \to \varinjlim \C_i$ are exact. 
If the categories $\C_i$ are linear over fields $\kk_i$ and the functors $F_i$ are $\varphi_i-$linear
with respect to field homomorphisms $\varphi_i: \kk_i\to \kk_{i+1}$ then the category $\varinjlim \C_i$ is
linear over $\varinjlim \kk_i$. Indeed, for any $\lambda \in \varinjlim \kk_i$ and $f\in \Hom(X,Y)$ we can find some 
$s\in \BZ_{\ge 0}$ such that $\lambda$ is represented by some $\widetilde \lambda \in \kk_s$ and $f$ is represented by 
$\widetilde f \in \Hom_{\C_s}(F^tX,F^rY)$, and we define $\lambda f$ to be represented by $\widetilde \lambda \widetilde f$. 
The functors $\C_i \to \varinjlim \C_i$ are linear with respect to the natural homomorphisms
$\kk_i \hookrightarrow \varinjlim \kk_i$.

5) Assume that the categories $\C_i$ and the functors $F_i$ are symmetric monoidal. Then the category
$\varinjlim \C_i$ is also symmetric monoidal. Namely for $X\in \C_a$ and $Y\in \C_b$ we define
$$X\otimes Y=\left\{ \begin{array}{cc}X\otimes F^{a-b}Y&\; \mbox{if}\; a\ge b,\\
F^{b-a}X\otimes Y&\; \mbox{if}\; b\ge a\end{array}\right.
$$
with obvious associativity, commutativity and unit constraints. It is clear that the functors $\C_i \to \varinjlim \C_i$
have natural structures of symmetric monoidal functors.

6) Now assume that \eqref{chainC} is a sequence of (symmetric) $\varphi_i$-tensor functors $\C_i\to \C_{i+1}$ between (symmetric) tensor categories over fields $\kk_i$.
By 4) and 5), it then follows $\varinjlim\C_i$ is a (symmetric) tensor category over $\varinjlim \kk_i$.

\begin{lemma}\label{ModLim}
If for a system $\{\C_i\,|\,i\in\mathbb{N}\}$ of pre-tannakian categories $\C_i$ over fields $\kk_i$, as above, each $\C_i$ is of moderate growth, then the symmetric tensor category $\varinjlim\C_i$ over $\varinjlim \kk_i$ is a pretannakian category of moderate growth.
\end{lemma}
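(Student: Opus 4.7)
The plan is to bound the alternating dimension $\ad(X)$ in the colimit $\C := \varinjlim_i \C_i$ in terms of $\ad$ at a finite stage, and then invoke Proposition~\ref{Propbcd} to deduce finite length and moderate growth. By items~4)--6) preceding the lemma, $\C$ is already a $\kk$-linear symmetric tensor category over $\kk := \varinjlim_i \kk_i$, with $\End_\C(\be) = \varinjlim_i \kk_i = \kk$. Thus it suffices to show that every object of $\C$ has finite length and moderate growth.

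I would first observe that each connecting $\varphi_i$-tensor functor $F_i : \C_i \to \C_{i+1}$ is faithful. Indeed, for nonzero $Y \in \C_i$, rigidity makes $\be$ a direct summand of $Y \otimes Y^{\ast}$, so $\be = F_i(\be)$ is a nonzero summand of $F_i(Y) \otimes F_i(Y)^{\ast}$, forcing $F_i(Y) \ne 0$; faithfulness on morphisms then follows by factoring through images. Combined with the construction of the colimit, this says that an object $Z \in \C_a$ represents the zero object of $\C$ if and only if $Z = 0$ already in $\C_a$.

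Next, I would verify that each $F_i$ preserves the alternating power construction $\mathrm{A}^n$. The skew-symmetrizer $a_n = \sum_s (-1)^{\mathrm{sgn}(s)} s$ has integer coefficients $\pm 1$, which are fixed by any field homomorphism, and $F_i$ preserves the symmetric braiding, so $F_i(a_n) = a_n$ as endomorphisms of $F_i(X^{\otimes n}) = F_i(X)^{\otimes n}$; exactness of $F_i$ then yields $F_i(\mathrm{A}^n X) = \mathrm{A}^n F_i(X)$. Together with the faithfulness step, this gives, for any $X \in \C$ represented by some $X_0 \in \C_a$,
\[
\ad_\C(X) \;=\; \ad_{\C_a}(X_0).
\]

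To conclude, since $\C_a$ is of moderate growth and $\mathrm{char}(\kk_a) = p > 0$, Corollary~\ref{CorChain} yields $\ad_{\C_a}(X_0) < \infty$, hence $\ad_\C(X) < \infty$ for every $X \in \C$. Applying Proposition~\ref{Propbcd}(4) and (5) in the STC $\C$, we obtain $\ell_\C(X) \le \ad_\C(X) < \infty$ and $\gd_\C(X) \le \ad_\C(X) < \infty$, meaning $X$ has finite length and moderate growth in $\C$. As $X$ was arbitrary, $\C$ is a pre-Tannakian category of moderate growth. The point I expect to require the most care is confirming that the image construction defining $\mathrm{A}^n$ commutes with passage to the colimit, but this is immediate from exactness of the $F_i$ together with item~4), which ensures that kernels, cokernels, and images in $\varinjlim_i \C_i$ can be computed at any finite stage.
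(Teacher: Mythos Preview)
Your proof is correct and follows essentially the same strategy as the paper: show that one of the equivalent finiteness conditions in Corollary~\ref{CorChain} is inherited by the colimit. The paper uses condition~(3) (finiteness of $\sd$), observing that non-injectivity of $\kk S_n\to\End(X^{\otimes n})$ at a finite stage persists in $\varinjlim\C_i$; you instead use condition~(4) (finiteness of $\ad$), showing that vanishing of $\mathrm{A}^nX_0$ at stage $a$ forces vanishing of $\mathrm{A}^nX$ in the colimit. Both arguments are equally direct, and your more explicit write-up is a faithful expansion of the paper's one-line proof.

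One small technical slip: the claim that ``rigidity makes $\be$ a direct summand of $Y\otimes Y^\ast$'' is false in general (take $Y$ the regular representation of $\BZ/2$ in characteristic $2$, where $Y\otimes Y^\ast\simeq Y^{\oplus 2}$ has no trivial summand). The correct argument for faithfulness of $F_i$ is that for nonzero $Y$ the evaluation $Y\otimes Y^\ast\to\be$ is surjective (as $\be$ is simple and $\mathrm{ev}\ne 0$ by the zigzag identities), so $F_i(Y)=0$ would force $F_i(\be)=\be=0$. In any case this step is inessential: you only need the inequality $\ad_\C(X)\le\ad_{\C_a}(X_0)$, which follows immediately from $F_i(\mathrm{A}^nX_0)=\mathrm{A}^nF_i(X_0)$ without invoking faithfulness.
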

\begin{proof}
It is clear that if each $\C_i$ satisfies property \ref{CorChain}(3) then the same is true for $\varinjlim\C_i$. Then we can use equivalence between (1) and (3) in Corollary~\ref{CorChain} (or Remark~\ref{Remchar0} if the fields have characteristic $0$). 
\end{proof} 

It is known, see the following example, that contrary to Lemma~\ref{ModLim} the direct limit  $\varinjlim\C_i$ of pre-Tannakian categories $\C_i$ (without moderate growth) need not be pre-Tannakian.

\begin{example}\label{ExD}
We refer the reader to \cite[2.19]{De} for the details for the above construction applied to the chain
$$\Rep(GL_t)\xrightarrow{F}\Rep(GL_{t-1})\xrightarrow{F}\Rep(GL_{t-2})\xrightarrow{F}\cdots$$
of semisimple pre-Tannakian categories over $\BC$ (with $t\in\BC\backslash \BZ$), where the functors $F: \Rep(GL_{t-i})\to \Rep(GL_{t-i-1})$ are restriction functors sending the generating object to a direct sum of the generating object and $\be$. The resulting tensor category $\varinjlim \Rep(GL_{t-i})$ over $\BC$ has
infinite dimensional $\Hom$ spaces, so is {\em not} pre-Tannakian.
\end{example}

\begin{theorem}\label{ThmPerf}
\begin{enumerate}
\item If $(\C,I)$ is reduced, then 
$$\Fr:\C\to\C\boxtimes\Ver_p\quad\mbox{ and }\quad\Fr^{en}:\C\to\C\boxtimes\Ver_p^{en}$$ are fully faithful. Moreover, if $(\C,I)$ is perfect, then $\Fr$ and $\Fr^{en}$ send simple objects to simple objects.
\item For a Frobenius exact pre-Tannakian category $\C$ over $\kk$, denote by $\KK$ the perfection $\varinjlim \kk$ of $\kk$ with canonical inclusion $\iota:\kk\hookrightarrow \KK$. If $\C$ is of moderate growth, there exists a Frobenius exact pre-Tannakian category $\C'$ of moderate growth over $\KK$ with a $\iota$-tensor functor $\C\to\C'$ such that $(\C',I)$ is perfect, for some tensor functor $I:(\Ver_p)_\KK\to\C'$.
\end{enumerate}
\end{theorem}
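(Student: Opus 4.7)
The plan is to prove both parts by direct category-theoretic arguments. Part (1) combines an elementary observation about fully faithful functors with a length comparison, while Part (2) realises $\C'$ as a 2-colimit of a Frobenius chain, a categorical analog of the perfection $R\rightsquigarrow\varinjlim_{\Frob}R$ of a reduced commutative algebra of characteristic $p$.

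For Part (1), the crucial observation is that $(\id,I):\C\boxtimes\Ver_p\to\C$ is faithful: a nonzero object decomposes as $\bigoplus_i X_i\boxtimes L_i$ with some $X_{i_0}\ne 0$, whose image $\bigoplus_i X_i\otimes I(L_i)$ contains the nonzero summand $X_{i_0}\otimes I(L_{i_0})$ (since $I$ is fully faithful and $I(L_{i_0})$ has nonzero categorical dimension, equal to the index of $L_{i_0}$ modulo $p$). Combined with the elementary fact that $GF$ fully faithful together with $G$ faithful forces $F$ fully faithful, the factorisation $\Fr[I]=(\id,I)\circ\Fr$ gives $\Fr$ fully faithful; the analogous factorisation $\Fr[I]=(\id,I\circ R)\circ\Fr^{en}$ coming from $\Fr=(\id\bt R)\circ\Fr^{en}$ gives $\Fr^{en}$ fully faithful. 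For the moreover clause, assume $(\C,I)$ is perfect, so $\Fr[I]$ is a length-preserving equivalence. For simple $X\in\C$, fully faithfulness forces $\End\Fr(X)=\kk$, which imposes $\Fr(X)=X_0\boxtimes L_0$ for some simple $L_0\in\Ver_p$ and indecomposable $X_0\in\C$ with $\End X_0=\kk$. Then $\Fr[I](X)=X_0\otimes I(L_0)$ is simple, and if $X_0$ had a proper nonzero subobject $A$, exactness of $-\otimes I(L_0)$ would produce a proper nonzero subobject $A\otimes I(L_0)\subsetneq X_0\otimes I(L_0)$, contradicting simplicity. The argument for $\Fr^{en}$ is identical.

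For Part (2), the plan is to consider the constant chain $\C_j:=\C\boxtimes\Ver_p$ for $j\ge 0$, with every transition $F_j=\Fr[I_0]$, where $I_0:\Ver_p\hookrightarrow\C_0$ is the canonical inclusion $L\mapsto\be\boxtimes L$. Each $F_j$ is a $\Frob$-linear exact symmetric tensor functor (using Frobenius exactness of $\C\boxtimes\Ver_p$, which follows from that of both factors by decomposing short exact sequences along the simples of $\Ver_p$). Let $\C':=\varinjlim\C_j$, built via the construction recalled in the paper. Then $\C'$ is a symmetric tensor category over $\KK=\varinjlim_{\Frob}\kk$, pre-Tannakian of moderate growth by Lemma~\ref{ModLim}, and Frobenius exact since $\Fr_{\C'}$ is the colimit of the exact functors $\Fr_{\C_j}$ (the relevant compatibility coming from the naturality of $\Fr$ in Section~\ref{SecFrComm}). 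The embedding $X\mapsto X\boxtimes\be$ from $\C$ into $\C_0$, composed with the canonical inclusion $\C_0\hookrightarrow\C'$, is the desired $\iota$-tensor functor, and $I_0:\Ver_p\hookrightarrow\C_0\hookrightarrow\C'$ extends $\KK$-linearly to $I:(\Ver_p)_\KK\to\C'$. To show $(\C',I)$ is perfect, note that $\Fr[I]$ restricted to each $\C_j\subset\C'$ coincides by construction with $F_j=\Fr[I_0]$, and that in the 2-colimit $F_j(X)$ is tautologically identified with $X\in\C_j$; hence $\Fr[I]$ acts on $\C'$ as the identity on objects (up to these identifications), and since $\Frob$ is a field automorphism of the perfect field $\KK$, this upgrades the identity-on-objects $\Frob$-linear monoidal functor $\Fr[I]$ to an equivalence of additive monoidal categories in the sense of Definition~\ref{DefPerfect}(2).

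The main anticipated obstacles are, first, verifying that the pre-Tannakian structure and moderate growth survive the colimit (here moderate growth, via Lemma~\ref{ModLim}, is precisely what rules out pathologies such as Example~\ref{ExD}); second, making rigorous the identification and naturality that justify ``$\Fr[I]$ is the identity on objects of $\C'$'', which requires carefully unpacking that $\Fr[I]|_{\C_j}$ coincides with the transition $F_j$ using the commutativity diagram of Section~\ref{SecFrComm}; and third, tracking $\kk$-, $\Frob$-, and $\KK$-linear structures through the chain to confirm that $\C\to\C'$ is $\iota$-linear and that $I$ is $\KK$-linear, which is essentially bookkeeping on top of the colimit recipe already recorded in the paper.
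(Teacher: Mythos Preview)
Your proof is correct and takes essentially the same approach as the paper. For Part~(1) both factor $\Fr[I]$ through $\Fr$ (resp.\ $\Fr^{en}$) and use that the remaining tensor functors are faithful and exact---the paper simply cites this standard fact, whereas your categorical-dimension detour for faithfulness of $(\id,I)$ is unnecessary, and your explicit simples-to-simples argument just unpacks the one-line observation that faithful exact functors reflect simples; for Part~(2) both pass to $\C\boxtimes\Ver_p$ and form the 2-colimit along $\Fr[I_0]$, identifying the induced shift endofunctor with $\Fr[I]$ via the naturality square of Section~\ref{SecFrComm}.
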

\begin{proof}
For part (1), we prove the properties for $\Fr^{en}$, the case of $\Fr$ is easier. We can expand \eqref{ItFr} further to get
$$\C \xrightarrow{\Fr^{en}} \C \bt \Ver_p^{en} \xrightarrow{\id\boxtimes R} \C\boxtimes\Ver_p \xrightarrow{(\id, I)} \C. $$
Every functor in this sequence is a tensor functor, and hence faithful and exact. If the overall composite is fully faithful, the first functor must be fully faithful. If the composite is an equivalence, the first functor must send simple objects to simple objects.

For (2), consider first the tensor functor $\C\to\C\boxtimes\Ver_p$ from the definition of the Deligne tensor product. This shows that it suffices to prove the claim for a category $\C$ which is already equipped with a tensor functor $J:\Ver_p\to\C$.

Then we define $\C'$ as the direct limit of the system
$$\C\xrightarrow{\Fr[J]}\C\xrightarrow{\Fr[J]}\C\xrightarrow{\Fr[J]}\cdots.$$

As summarized in 6) above, $\C'$ is a pre-Tannakian of moderate growth. The limit of $J:\Ver_p\to \C$ yields a tensor functor $I$ from $(\Ver_p)_{\KK}\simeq \varinjlim\Ver_p$ to $\C'$. Clearly, the limit of $\Fr[J]$ along itself becomes an equivalence, so it suffices to demonstrate that this limit is isomorphic to the endofunctor $\Fr[I]$ of $\C'$. The latter follows from commutativity of the exterior rectangle in the diagram
$$
\xymatrix{
\C\ar[r]^-{\Fr}\ar[d]&\C\boxtimes\Ver_p\ar[r]^-{(\id, J)}\ar[d]&\C\ar[d]\\
\C'\ar[r]^-{\Fr}&\C'\boxtimes_{\KK}(\Ver_p)_{\KK}\ar[r]^-{(\id, I)}&\C'.
}
$$
In the above diagram, commutativity of the right square is immediate. Commutativity of the left square follows from commutativity of 
$$
\xymatrix{
\C\ar[r]^-{\Fr}\ar[d]^-{\Fr[J]}&\C\boxtimes\Ver_p\ar[d]^{\Fr[J]\boxtimes \Fr[\id]}\\
\C\ar[r]^-{\Fr}&\C\boxtimes\Ver_p,
}
$$
see Section~\ref{SecFrComm}.
\end{proof}

\begin{remark}\label{RemTerm}
For $\cA$ the Frobenius type of $\C$ (see Definition~\ref{DefFT}) one can define $\Fr[I]$ via some $I:\cA\to\C$ and extend Definition~\ref{DefPerfect}. Let $\kk$ be a perfect field and $G$ an affine group scheme over $\kk$ and consider $I:\mathrm{Vec}\to\Rep G$.  Then $(\Rep G,I)$ is reduced if and only if the scheme $G$ is reduced, and $(\Rep G,I)$ is perfect if and only if $G$ is perfect (as an $\mathbb{F}_p$-scheme).
\end{remark}

Based on the above observation, we have the following examples of the (analog of the) limit procedure in the proof of Theorem~\ref{ThmPerf}.
\begin{example} Let $\kk$ be a perfect field and consider the multiplicative group $\mathbb{G}_m$ over $\kk$ and the group $\mu_p$ of $p$-th roots of unity.
\begin{enumerate}
\item If $\C=\Rep\mathbb{G}_m\simeq\Ve_{\BZ}$ is the category of $\BZ$-graded vector spaces, which is reduced but not perfect, then the limit along $\Fr[I]\simeq\Fr_+$ is $\Ve_{\BZ[1/p]}$.
\item If $\C=\Rep\mu_p\simeq\Ve_{\BZ/p}$ is the category of $\BZ/p$-graded vector spaces, which is not reduced, then the limit along $\Fr[I]\simeq\Fr_+$ is $\Ve$.
\end{enumerate}
\end{example}
\section{Proof of the main theorem}
In this section we assume $p>0$.
\subsection{The subcategory $\C^\ast$.} 

\begin{definition} Let $\C$ be a Frobenius exact pre-Tannakian category. If $p\le 3$ set $\C^\ast=\C$ and if $p\ge 3$ let $\C^*\subset \C$ be the full subcategory consisting of objects $X\in \C$ such
that $\Fr(X)$ lies in subcategory $\C \bt \sVec\subset \C \bt \Ver_p$.
\end{definition}

 Note that the two definitions agree for $p=3$ since $\Ver_3=\sVec$.
The following result is immediate.

\begin{proposition} The subcategory $\C^*\subset \C$ is a Serre subcategory closed under tensor product
and duality.
\end{proposition}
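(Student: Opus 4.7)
The plan is to verify each closure property by transporting the corresponding property of $\sVec\subset\Ver_p$ across the exact symmetric tensor functor $\Fr:\C\to\C\boxtimes\Ver_p$. The case $p\le 3$ is vacuous, so assume $p>3$.

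First I would record the auxiliary fact that $\C\boxtimes\sVec$ is a Serre subcategory of $\C\boxtimes\Ver_p$ that is closed under tensor product and duality. In our setting the second factor $\Ver_p$ is semisimple with one-dimensional endomorphism algebras on simples, so every object of $\C\boxtimes\Ver_p$ decomposes as a finite direct sum $\bigoplus_i Y_i\boxtimes L_i$ with $Y_i\in\C$, and membership in $\C\boxtimes\sVec$ corresponds to the vanishing of $Y_i$ for $L_i\not\in\sVec$. This description makes the Serre, tensor, and duality closure of $\C\boxtimes\sVec$ inside $\C\boxtimes\Ver_p$ an immediate consequence of the same properties for $\sVec\subset\Ver_p$.

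Next I would use that $\C$ is Frobenius exact, so $\Fr$ is exact (Proposition~\ref{Frex}). For any short exact sequence $0\to Y\to X\to Z\to 0$ in $\C$, applying $\Fr$ yields an exact sequence $0\to\Fr(Y)\to\Fr(X)\to\Fr(Z)\to 0$ in $\C\boxtimes\Ver_p$. If $X\in\C^\ast$, then $\Fr(X)\in\C\boxtimes\sVec$, and the sub/quotient closure of $\C\boxtimes\sVec$ places $\Fr(Y)$ and $\Fr(Z)$ there, giving $Y,Z\in\C^\ast$. Conversely, if $Y,Z\in\C^\ast$, then extension-closure of $\C\boxtimes\sVec$ in $\C\boxtimes\Ver_p$ forces $\Fr(X)\in\C\boxtimes\sVec$, so $X\in\C^\ast$. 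This handles the Serre subcategory axioms.

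Finally, closure under tensor product and duality follows from the symmetric monoidal structure of $\Fr$: one has canonical isomorphisms $\Fr(X\otimes Y)\simeq\Fr(X)\otimes\Fr(Y)$ and $\Fr(X^\ast)\simeq\Fr(X)^\ast$, so these operations preserve the preimage under $\Fr$ of any tensor-ideal and duality-stable subcategory. There is no real obstacle in this proposition; the only slightly non-formal point is the Deligne-product description of $\C\boxtimes\sVec$ above, which relies on the semisimplicity of $\Ver_p$ and the explicit construction of $\C\boxtimes\cD$ recalled in the preliminaries.
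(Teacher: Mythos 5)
Your proof is correct and is precisely the argument the paper has in mind: the paper states the result is "immediate" without writing a proof, and the intended reasoning is exactly your transport of the closure properties of $\sVec\subset\Ver_p$ along the exact symmetric monoidal functor $\Fr$, using the componentwise description of $\C\boxtimes\sVec$ inside $\C\boxtimes\Ver_p$.
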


In particular, $\C^*$ is a tensor subcategory of $\C$ in the sense of \cite[4.11.1]{EGNO}. In fact, $\C^\ast$ is the maximal tensor subcategory of $\C$ which is of Frobenius type $\Ve$ or $\sVec$.

\begin{question}
As will follow from Theorem~\ref{ThmAllFields}, Frobenius exact pre-Tannakian categories of moderate growth are either of Frobenius type $\Ve$ or $\Ver_p^+$. For symmetric fusion categories and more generally finite tensor categories, this was already observed in \cite{O, EOf}.  It is an open question whether the same is still true without the assumption on moderate growth.
\end{question}

\begin{example} We have $\Ver_p^*=\sVe \subset \Ver_p$, if $p>2$.
The category $\Ver_p^{en*}$ is the tensor subcategory of $\Ver_p^{en}$ generated by invertible objects, as is consistent with the definition of $\Ver_p^{en*}$ in Section~\ref{SecEnh}.
\end{example}

\subsection{The enhanced Frobenius functor on $\C^\ast$} Assume that $\C=\C^*$ (for a Frobenius exact pre-Tannakian category $\C$). It follows immediately from equation~\eqref{enFr} and the definition of $\Ver_p^{en\ast}$ in Section \ref{SecEnh} that the enhanced Frobenius functor $\Fr^{en}$ then takes values in $\C\boxtimes\Ver_p^{en*}$.

Let $T$ be an isomorphism class of invertible ({\em i.e.} simple) objects in $\Ver_p^{en*}$.
We say that an object $X\in \C$ is homogeneous of degree $T$ if $\Fr^{en}(X)\simeq Y\bt T$ for
some $Y\in \C$. It is clear that the homogeneous objects of degree $T$ form a Serre subcategory
$\C_T\subset \C$  and that $\C_T\otimes \C_S\subset \C_{T\ot S}$.

\begin{proposition} \label{graded prop}
Assume that $\C=\C^\ast$ and that $\Fr^{en}$ is fully faithful. Then 
$$\C =\bigoplus_T\C_T.$$
Thus the category $\C$ is graded (possibly non-faithfully) in the sense of \cite[Section~4.14]{EGNO} by the group of isomorphism
classes  of invertible objects of $\Ver_p^{en*}$ (isomorphic to $\BZ/2(p-1)$ if $p>2$).
\end{proposition}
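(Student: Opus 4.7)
The plan is to use the hypothesis that $\Fr^{en}$ is fully faithful to lift an obvious block decomposition of $\Fr^{en}(X)$ in $\C\boxtimes\Ver_p^{en*}$ to a block decomposition of $X$ in $\C$.

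First, I would analyse the target category $\C\boxtimes\Ver_p^{en*}$. Since $\Ver_p^{en*}\simeq\Rep(\BZ/2(p-1),z)$ is semisimple with one-dimensional endomorphism algebras on simples, and the simple objects of $\Ver_p^{en*}$ are precisely the invertible objects $T$, the construction of $\boxtimes$ recalled in Section~2.4 shows that every object of $\C\boxtimes\Ver_p^{en*}$ splits canonically as $\bigoplus_T Y_T\bt T$, where $T$ ranges over isomorphism classes of invertibles. Applying this to $Z:=\Fr^{en}(X)$ produces pairwise orthogonal idempotents $e_T\in\End(Z)$, with $e_Te_{T'}=0$ for $T\neq T'$ and $\sum_T e_T=\id_Z$, such that $\Im(e_T)=Y_T\bt T$.

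Next, I would use full faithfulness to lift. Since $\Fr^{en}:\End_\C(X)\xrightarrow{\sim}\End(Z)$ is a bijection of $\kk$-algebras (in particular sends identity to identity and is multiplicative), each $e_T$ is the image of a unique $\tilde e_T\in\End_\C(X)$, and the relations $\tilde e_T^2=\tilde e_T$, $\tilde e_T\tilde e_{T'}=0$ for $T\neq T'$, and $\sum_T\tilde e_T=\id_X$ follow from faithfulness. Setting $X_T:=\Im(\tilde e_T)$ gives $X=\bigoplus_T X_T$. To see that $X_T\in\C_T$, I would invoke Frobenius exactness of $\C$, which makes $\Fr^{en}$ exact (Proposition~\ref{Frex}), so $\Fr^{en}(X_T)=\Im(\Fr^{en}(\tilde e_T))=\Im(e_T)=Y_T\bt T$, proving that $X_T$ is homogeneous of degree $T$.

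Finally, I would check the two properties of a (possibly non-faithful) grading. The containment $\C_T\otimes\C_S\subset\C_{T\otimes S}$ follows from monoidality of $\Fr^{en}$: if $\Fr^{en}(X)\simeq A\bt T$ and $\Fr^{en}(Y)\simeq B\bt S$, then $\Fr^{en}(X\otimes Y)\simeq(A\otimes B)\bt(T\otimes S)$. The fact that the sum $\bigoplus_T\C_T$ is direct, i.e.\ $\Hom_\C(X,Y)=0$ for $X\in\C_T$, $Y\in\C_S$ with $T\not\simeq S$, follows again from full faithfulness: $\Hom(\Fr^{en}X,\Fr^{en}Y)=\Hom_\C(A,B)\otimes\Hom_{\Ver_p^{en*}}(T,S)=0$. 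No step here presents a genuine obstacle once full faithfulness is in hand; the only mild subtlety is the clean description of $\C\boxtimes\Ver_p^{en*}$ as an orthogonal block decomposition indexed by the invertibles of $\Ver_p^{en*}$, which rests on pointedness of $\Ver_p^{en*}$.
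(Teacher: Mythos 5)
Your argument is correct and is essentially the paper's proof: both hinge on full faithfulness of $\Fr^{en}$ transporting the evident block decomposition of $\C\boxtimes\Ver_p^{en*}$ (indexed by the invertibles $T$) back to $\C$. The paper phrases this by noting that a fully faithful functor preserves indecomposability, so every indecomposable object is homogeneous; your lifting of the orthogonal idempotents $e_T$ is the same idea in a slightly different (and, if anything, more explicit) guise, and your closing checks of $\C_T\otimes\C_S\subset\C_{T\otimes S}$ and of the vanishing of Homs between distinct degrees are exactly what the paper's ``the result follows'' leaves implicit.
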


\begin{proof} Since $\Fr^{en}$ is fully faithful, it maps indecomposable objects to indecomposable ones.
Thus any indecomposable object is homogeneous of some degree and the result follows.
\end{proof} 

Let $\C_\be$ be the neutral component of the grading given by Proposition \ref{graded prop}.
Thus $\C_\be$ consists of objects $X$ such that $\Fr^{en}(X)=Y\bt \be$. In other words, we have
$\Fr^{en}(X)=\Fr_+(X)\bt \be$ for all $X\in \C_\be$ and the functor $\Fr_+$ restricted to $\C_\be$
is a symmetric tensor functor. By Theorem~\ref{ThmTan2} we obtain the following corollary.

\begin{corollary} \label{Tann cor}
Assume furthermore that $\C$ is of moderate growth. Then the subcategory $\C_\be$ is Tannakian.
\end{corollary}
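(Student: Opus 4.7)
The plan is to verify that $\C_\be$ satisfies the three hypotheses of Theorem~\ref{ThmTan2}, from which the conclusion follows immediately.

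First, I would check that $\C_\be$ is itself a pre-Tannakian category. Since $\C_\be \subset \C$ is a Serre subcategory of a pre-Tannakian category (it is the neutral component of the grading from Proposition~\ref{graded prop}), it is abelian with finite-length objects, $\kk$-linear morphism spaces of the right dimension, and inherits the symmetric monoidal structure from $\C$; closure under duality holds because the trivial degree is preserved by dualization in a group-graded category. Moderate growth of $\C_\be$ is inherited from $\C$ verbatim, since lengths of tensor powers are computed in $\C$.

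Next, I would verify Frobenius exactness of $\C_\be$. A short exact sequence in $\C_\be$ is in particular a short exact sequence in $\C$, and the Frobenius functor $\Fr$ on $\C$ is exact by assumption. Since $\Fr$ on $\C$ restricts to a functor $\C_\be \to \C_\be \boxtimes \Ver_p$ (using $\C_\be \subset \C^\ast$ so that $\Fr$ lands in $\C_\be \boxtimes \sVec$, and further $\Fr^{en}$ lands in $\C_\be \boxtimes \be$ by definition of $\C_\be$, so that $\Fr$ lands in $\C_\be\boxtimes\Ve$), we can equivalently verify exactness of $\Fr_+$ on $\C_\be$: but this is a direct summand of $\Fr^{en}|_{\C_\be}$, and the latter is exact by Proposition~\ref{Frex}(2) applied inside $\C$.

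The key point is the monoidality of $\Fr_+|_{\C_\be}$. By the definition of $\C_\be$, for every $X \in \C_\be$ we have a canonical isomorphism $\Fr^{en}(X) \simeq \Fr_+(X) \boxtimes \be$. The functor $\Fr^{en}$ is symmetric monoidal, so the tensor constraint supplies natural isomorphisms
\[
\Fr_+(X \otimes Y) \boxtimes \be \;\simeq\; \Fr^{en}(X\otimes Y) \;\simeq\; \Fr^{en}(X)\otimes\Fr^{en}(Y) \;\simeq\; \bigl(\Fr_+(X)\otimes \Fr_+(Y)\bigr)\boxtimes \be,
\]
and projecting to the first tensor factor (via $\underline{\Hom}(\be,-)$ on the second factor) upgrades the canonical lax monoidal structure on $\Fr_+$ to a symmetric monoidal structure on $\Fr_+|_{\C_\be}$, with coherence inherited from that of $\Fr^{en}$.

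With all three hypotheses of Theorem~\ref{ThmTan2} in hand for $\C_\be$, that theorem gives that $\C_\be$ is Tannakian. I do not expect any real obstacle here: the Corollary is essentially a bookkeeping consequence of Theorem~\ref{ThmTan2} combined with the observation that the neutral component of the $\Ver_p^{en\ast}$-grading is precisely the locus where the lax monoidal $\Fr_+$ becomes monoidal.
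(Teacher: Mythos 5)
Your proposal is correct and matches the paper's argument: the paper likewise observes that for $X\in\C_\be$ one has $\Fr^{en}(X)=\Fr_+(X)\boxtimes\be$, so $\Fr_+|_{\C_\be}$ is a symmetric tensor functor, and then applies Theorem~\ref{ThmTan2}, with pre-Tannakianity, Frobenius exactness and moderate growth of the Serre tensor subcategory $\C_\be$ inherited from $\C$. You have simply spelled out the bookkeeping in more detail than the paper does.
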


\begin{remark}
It will follow a posteori from Theorem~\ref{ThmAllFields} that the grading in Proposition~\ref{graded prop} corresponds to a faithful $\BZ/2$-grading, if $\C$ is of moderate growth. Without moderate growth it is an open question whether this is true.
\end{remark}

\subsection{Graded categories}
\begin{proposition}\label{PropGradTan} Let $\C$ be pre-Tannakian and graded by an abelian group $H$:
$$\C =\bigoplus_{h\in H}\C_h.$$
Assume that the neutral component $\C_1$ is Tannakian. Then there exists a field extension
$\varphi:\kk \hookrightarrow \KK$ and a symmetric $\varphi$-tensor functor $\C \to \cD$ where $\cD$ is a semisimple pointed category
over $\KK$. \end{proposition}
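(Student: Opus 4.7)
The plan is to extend the Tannakian fiber functor on $\C_1$ to all of $\C$ after passing to a suitable field extension $\KK/\kk$, obtaining a functor valued in a twisted form of $\Ve_\KK^H$. Since such a twisted form is by construction semisimple pointed, this will give the required $\cD$.

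First I would apply the Tannakian hypothesis on $\C_1$ to fix a symmetric tensor functor $F_1 : \C_1 \to \Ve_\kk$, identifying $\C_1 \simeq \Rep G$ for an affine group scheme $G/\kk$. Rigidity together with the pairings $\otimes : \C_h \bt \C_{h^{-1}} \to \C_1$ exhibits each nonzero $\C_h$ as an invertible $\C_1$-module category, classified under $\C_1 \simeq \Rep G$ by a torsor-type datum that becomes trivial after a suitable base change. I would then choose a field extension $\varphi : \kk \hookrightarrow \KK$ simultaneously trivializing the $\C_1$-module categories $\C_h$ for all $h$, taking (if $H$ is infinite) an appropriate compositum or transfinite direct limit of the relevant splitting fields.

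After this base change, for each $h$ with $\C_h \neq 0$ one can pick an object $u_h \in \C_h \otimes_\kk \KK$ that is invertible inside $\C \otimes_\kk \KK$, i.e.\ whose dual in $\C_{h^{-1}} \otimes \KK$ pairs back to $\be$ under $\otimes$. Choosing isomorphisms $c_{h,h'} : u_h \otimes u_{h'} \xrightarrow{\sim} u_{h h'}$ determines a $2$-cocycle on $H$ with values in $\KK^\times$ controlling associativity, and the symmetric braiding on $\C$ produces a symmetric bicharacter $\beta : H \times H \to \KK^\times$. Let $\cD$ be the corresponding twisted form of $\Ve_\KK^H$, which is a semisimple pointed STC over $\KK$. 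I would define the symmetric $\varphi$-tensor functor $F : \C \to \cD$ by sending a homogeneous object $X \in \C_h$ to the $\KK$-vector space $F_1(X \otimes u_h^\ast)$ placed in grade $h$ of $\cD$ (interpreting $F_1$ as implicitly extended to $\KK$), and extending additively. Monoidality reduces to monoidality of $F_1$ together with $u_h \otimes u_{h'} \cong u_{h h'}$, and compatibility with the symmetric structure is precisely the bicharacter twist $\beta$.

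The main obstacle is the trivialization step: one must know that each invertible $\C_1$-module category $\C_h$ becomes trivial over some field extension of $\kk$, and that one single extension $\KK$ can be chosen for all $h \in H$ simultaneously. When $H$ is finite (or $\C$ is finitely generated as a tensor category) this amounts to a finite set of torsors each splitting over its function field, but in general the argument requires a careful direct-limit construction, and in any case relies on an appropriate classification of invertible $\Rep G$-module categories up to base change, which is the technical heart of the proof.
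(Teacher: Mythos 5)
There is a genuine gap at the trivialization step, and it is not repairable by any field extension. You claim that each nonzero $\C_h$, viewed as an invertible $\C_1$-module category, becomes trivial after base change, so that one can pick $u_h\in\C_h\otimes_\kk\KK$ invertible in $\C\otimes_\kk\KK$. This is false: invertible module categories over $\Rep G$ are not in general split by extending the base field. A concrete counterexample is $\C=\Rep(Q_8)$ over $\BC$ with its $\BZ/2$-grading, where $\C_{\bar 0}=\Rep((\BZ/2)^2)$ (the invertibles) is Tannakian and $\C_{\bar 1}$ is the additive category generated by the $2$-dimensional simple $V$. As a $\C_{\bar 0}$-module category, $\C_{\bar 1}$ is the category of projective representations of $(\BZ/2)^2$ with nondegenerate cocycle; it contains no invertible object of $\C$, and never will after any field extension, since every invertible object of $\Rep(Q_8)_{\KK}$ is a character of $Q_8^{ab}$ and hence lies in degree $\bar 0$. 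So the objects $u_h$ on which your functor $X\mapsto F_1(X\otimes u_h^\ast)$ is built need not exist, and the construction collapses.

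The correct move — and this is what the paper does — is to base change not along a field extension but along the commutative algebra $A\in\Ind(\C_1)$ representing $X\mapsto S(X)^*$ for a fiber functor $S$ of $\C_1$ (i.e., to de-equivariantize by $G$ itself, after first arranging $\KK=\overline{\KK}$). One takes $\cD$ to be the category of $A$-modules in $\Ind(\C)$ that are summands of $X\otimes A$; it inherits the $H$-grading, its neutral component is $\Ve_\KK$, and for $X\in\cD_h$, $Y\in\cD_{h^{-1}}$ the isomorphism $\End_\cD(X)\otimes_\KK\End_\cD(Y)\xrightarrow{\sim}\End_\cD(X\otimes Y)$ with $X\otimes Y\in\cD_1\simeq\Ve_\KK$ forces $\End_\cD(X)$ to be a matrix algebra. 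Hence each $\cD_h$ has a unique indecomposable, which is invertible, so $\cD$ is semisimple pointed and $?\otimes A:\C\to\cD$ is the desired functor. In the $Q_8$ example this sends $V$ to $\chi^{\oplus 2}$ in $\Rep(\BZ/2)$, which is why no single invertible $u_{\bar 1}$ could ever have worked upstairs. Your cocycle/bicharacter description of the resulting pointed category is fine as a posteriori bookkeeping, but it cannot serve as the construction.
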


\begin{proof} By the definition of Tannakian categories, $\C_1$ admits a fiber functor $S$ to $\Ve_{\KK}$ for some
field extension $\kk \subset {\KK}$.  We can and will assume that ${\KK}$ is algebraically closed (if ${\KK}$ is not
algebraically closed, we replace the fiber functor by its composition with $?\otimes_{\KK} \overline {\KK}: \Ve_{\KK} \to \Ve_{\overline {\KK}}$). 
Let $A$ be an (injective) ind-object of $\C_1$ representing the functor $X\mapsto S(X)^*$, that is
$$\Hom_{\C_1}(X,A)=S(X)^*.$$
Then tracing the product of identity morphisms on $A$ through
$$\End(A)\otimes_{\KK}\End(A)= S(A)^\ast\otimes_{\KK} S(A)^\ast\to (S(A)\otimes_{\KK}S(A))^\ast\xleftarrow{\sim}S(A\otimes A)^\ast=\Hom(A\otimes A,A)$$
equips $A$ with the structure of a commutative algebra in the category $\Ind(\C_1)$ with the following properties:


1) for any $X\in \C_1$ the $A-$module $X\ot A$ is isomorphic to a finite direct sum of copies of $A$;

2) $\Hom_{\Ind(\C_1)}(\be,A)={\KK}$ is an algebraically closed field.

Let $\cD_1$ be the full subcategory of the category of $A-$modules in $\Ind(\C_1)$ consisting of direct summands
of objects of the form $X\ot A$ where $X\in \C_1$. It is clear that $\cD_1$ is a rigid symmetric monoidal category;
property 2) above says that $\End_{\cD_1}(\be)={\KK}$ and property 1) implies that the functor $M\mapsto
\Hom_{\cD_1}(\be,M)$ gives a symmetric monoidal equivalence $\cD_1\simeq \Ve_{\KK}$. 

Now let us consider the category $\cD$ which is the full subcategory of the category of rigid $A-$modules in $\Ind(\C)$
consisting of direct summands of objects of the form $X\ot A$ with $X\in \C$. It is clear that $\cD$ is a
${\KK}-$linear rigid symmetric monoidal category (of course a priori the category $\cD$ might fail to be abelian). It is clear that the category $\cD$ inherits an $H-$grading
from the category $\C$:
$$\cD=\bigoplus_{h\in H}\cD_h,$$
and the neutral component of this grading is precisely the category $\cD_1$ above.
Furthermore, the $\Hom-$spaces in the category $\cD$ are finite dimensional vector spaces over $\KK$, since for homogeneous
$X,Y \in \cD$ we have
$$\Hom_\cD(X,Y)=\left\{ \begin{array}{cc}0&\mbox{if the degrees of}\; X\; \mbox{and}\; Y\; \mbox{are distinct};\\
\Hom_{\cD_1}(\be, X^*\ot Y)&\mbox{otherwise}\end{array}\right.$$
Let $X$ be an object of $\cD_h$; we claim that its endomorphism algebra is isomorphic to some matrix
algebra over $\KK$. Pick a non-zero object $Y\in \cD_{h^{-1}}$. By the previous displayed equation, the canonical algebra morphism
$$\End_{\cD}(X)\ot_{\KK}\End_{\cD}(Y)\to\End_{\cD}(X\ot Y)$$
is an isomorphism.
Since $X\ot Y\in \cD_1\simeq \Ve_{\KK}$ we see that the algebra $\End_{\cD}(X\ot Y)$ is a matrix algebra 
over $\KK$; this implies that $\End_{\cD}(X)$ is simple and hence also a matrix algebra (as $\overline{\KK}=\KK$). It follows that all indecomposable objects
of $\cD_h$ are isomorphic to each other and invertible. Thus the category $\cD$ is semisimple (and hence
abelian) and pointed.

Finally $?\ot A: \C \to \cD$ is the required symmetric tensor functor, concluding the proof.\end{proof}

\begin{corollary} \label{graded str cor} 
 Let $\C$ be a graded pre-Tannakian category such that its neutral component is Tannakian.
Then there exists a field extension $\kk \subset \KK$ and a super fiber functor $\C \to \sVe_{\KK}$.
\end{corollary}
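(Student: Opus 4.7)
The plan is to apply Proposition~\ref{PropGradTan} to produce a pointed semisimple symmetric tensor category $\cD$ over an extension field, and then factor through a canonical super fiber functor on such a $\cD$.

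First, I would apply Proposition~\ref{PropGradTan} to obtain a field extension $\varphi:\kk\hookrightarrow\KK$ (which I may enlarge so that $\KK$ is algebraically closed) and a symmetric $\varphi$-tensor functor $F:\C\to\cD$ with $\cD$ semisimple pointed over $\KK$. From the proof of Proposition~\ref{PropGradTan} every simple object of $\cD$ is invertible; let $\Gamma$ denote the group of isomorphism classes of invertibles, which surjects onto the support of the $H$-grading with trivial kernel since $\cD_1\simeq\Ve_\KK$.

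Second, I would construct a super fiber functor $S:\cD\to\sVe_\KK$ as follows. For each invertible simple $L\in\cD$, the self-braiding $c_{L,L}$ is a scalar $\epsilon(L)\in\KK^\ast$ with $\epsilon(L)^2=1$ by symmetry of the braiding, and the hexagon axiom applied to invertibles forces $\epsilon:\Gamma\to\{\pm 1\}$ to be a group homomorphism (when $\mathrm{char}(\KK)=2$ this is trivial and $\sVe_\KK=\Ve_\KK$). Send each $L\in\cD_g$ to a one-dimensional super line of parity $\epsilon(L)$, and build the monoidal structure on $S$ from the canonical identifications $L_g\otimes L_h\simeq L_{gh}$. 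Composing $S\circ F$ then yields the required super fiber functor $\C\to\sVe_\KK$.

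The main obstacle is verifying that $S$ is indeed a symmetric tensor functor, which reduces to checking that the associator $3$-cocycle of $\cD$ is gauge-equivalent to the super sign cocycle determined by $\epsilon$. This amounts to the structural claim that every pointed semisimple symmetric tensor category over an algebraically closed field is equivalent, as a symmetric tensor category, to $\Rep(\widehat{\Gamma},z_\epsilon)$, where $\widehat\Gamma$ is the Cartier dual of $\Gamma$ and $z_\epsilon\in\widehat\Gamma(\KK)$ is the element of order dividing $2$ corresponding to $\epsilon$; such a category manifestly admits a canonical super fiber functor via the forgetful construction with parity given by $z_\epsilon$. Proving this structural statement may require separating the prime-to-$p$ and $p$-parts of $\Gamma$ (a further extension of $\KK$ may be needed to split the $p$-part) and invoking Eilenberg--Mac Lane's computation of the abelian third cohomology, but once established it completes the argument.
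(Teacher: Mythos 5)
Your proposal is correct and takes essentially the same route as the paper: apply Proposition~\ref{PropGradTan} to land in a semisimple pointed category over an extension field, then use that every such category admits a super fiber functor. The paper disposes of the second step simply by citing \cite[Example~6.2.3]{Co}, which is precisely the structural claim you isolate as the remaining obstacle (trivializing the associator of a pointed semisimple symmetric category against the super sign cocycle of the parity homomorphism $\epsilon$ via the Eilenberg--Mac Lane classification), so your sketch of that step, while left incomplete, is filling in a black box the paper takes as known.
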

\begin{proof}
By \cite[Example~6.2.3]{Co}, every semisimple pointed category admits a super fiber functor. The conclusion thus follows from Proposition~\ref{PropGradTan}.
\end{proof}

\subsection{Concluding the proof} 
In this section we consider a Frobenius exact pre-Tannakian category $\C$ equipped with a tensor functor $I:\Ver_p\to\C$.

\begin{proposition} \label{star prop}
Assume that $(\C,I)$ is reduced. Then the Frobenius functor
$\Fr$ lands in $\C^*\bt \Ver_p$.
\end{proposition}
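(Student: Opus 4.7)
The plan is to compute the iterated Frobenius $\Fr_{\mathcal{E}}(\Fr(X))$ for $\mathcal{E} := \C\boxtimes\Ver_p$ in two different ways and compare. First note that $\mathcal{E}$ is Frobenius exact (by Proposition~\ref{Frex}(5), taking the Deligne product of a splitting functor for $\C$ with the identity on $\Ver_p$), so $\Fr_{\mathcal{E}}$ is itself a $\Frob$-tensor functor. Given $X\in\C$, I fix decompositions $\Fr(X) = \bigoplus_i M_i\boxtimes L_{a_i}$ and $\Fr(M_i) = \bigoplus_j N_{ij}\boxtimes L_{b_{ij}}$; the goal is to show $L_{b_{ij}}\in\sVec$ whenever $N_{ij}\neq 0$, which will establish $M_i\in\C^\ast$.

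The first computation applies Section~\ref{SecFrComm} to the $\Frob$-tensor functor $H := \Fr\colon \C\to\mathcal{E}$. The resulting commutative square gives $\Fr_{\mathcal{E}} \circ \Fr \simeq H'\circ \Fr$, with $H'$ acting as $H = \Fr$ on $\C$ and as the identity on the $\Ver_p$-factor; evaluating on $X$ one obtains
\[
\Fr_{\mathcal{E}}(\Fr(X)) \;\simeq\; \bigoplus_{i,j} N_{ij} \boxtimes L_{b_{ij}} \boxtimes L_{a_i}
\]
in $\C\boxtimes \Ver_p\boxtimes\Ver_p$, where the first $\Ver_p$-factor (containing $L_{b_{ij}}$) is the one inside $\mathcal{E}$ and the second (containing $L_{a_i}$) is the new factor produced by $\Fr_{\mathcal{E}}$.

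The second computation proceeds by writing $M_i\boxtimes L_{a_i} \simeq (M_i\boxtimes\be)\otimes(\be\boxtimes L_{a_i})$ and using monoidality of $\Fr_{\mathcal{E}}$, together with Section~\ref{SecFrComm} applied to the inclusions $\C\hookrightarrow\mathcal{E}$ and $\Ver_p\hookrightarrow\mathcal{E}$. The key input here is Example~\ref{ExFrFun}(i), which gives $\Fr_{\Ver_p}(L_{a_i}) = \epsilon_i\boxtimes L_{d_i}$ with $\epsilon_i\in\{\be,L_{p-1}\}\subset\sVec$. This produces
\[
\Fr_{\mathcal{E}}(\Fr(X)) \;\simeq\; \bigoplus_{i,j} N_{ij} \boxtimes \epsilon_i \boxtimes (L_{b_{ij}}\otimes L_{d_i}).
\]

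To conclude, I match isotypic components of the two expressions indexed by simple objects $L_k\boxtimes L_l \in \Ver_p\boxtimes\Ver_p$. When $L_k\notin\sVec$ the second expression has no contribution (since $\epsilon_i\in\sVec$ for every $i$), so neither does the first, forcing $\bigoplus_{(i,j):\,b_{ij}=k,\,a_i=l} N_{ij}=0$ in $\C$ for each $l$. Specializing to $l = a_{i_0}$ for any $(i_0,j_0)$ with $b_{i_0 j_0}=k\notin\sVec$ then yields $N_{i_0 j_0}=0$, so $\Fr(M_i)\in\C\boxtimes\sVec$, i.e., $M_i\in\C^\ast$, for every $i$. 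The main obstacle will be careful bookkeeping of the two distinct $\Ver_p$-factors in $\mathcal{E}\boxtimes\Ver_p$, one coming from $\mathcal{E}$ itself and one newly produced by $\Fr_{\mathcal{E}}$, so that the two computations are genuinely compared within the same ambient category.
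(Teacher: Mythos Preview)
Your proof is correct and in fact establishes a slightly stronger statement than the paper does at this point: you never invoke the reduced hypothesis, so you show that $\Fr$ lands in $\C^\ast\boxtimes\Ver_p$ for \emph{any} Frobenius exact pre-Tannakian $\C$. The paper's argument is different: it first reduces to simple $X$ (using that $\C^\ast$ is Serre), then uses full faithfulness of $\Fr[I]$ (this is where reducedness enters) to see that $\Fr(X)=Y\boxtimes L_i$ with $Y$ indecomposable and likewise $\Fr(Y)=Z\boxtimes L_j$, and finally observes that indecomposability of $(\Fr[I])^2(X)\simeq Z\otimes I(L_i\otimes L_j)$ forces $L_i\otimes L_j$ to be simple in $\Ver_p$, hence $i\in\{1,p-1\}$ or $j\in\{1,p-1\}$.

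Both arguments hinge on iterating the Frobenius functor twice, but yours packages this as a single isotypic-component comparison in $\C\boxtimes\Ver_p\boxtimes\Ver_p$, while the paper's argument stays inside $\C$ via $\Fr[I]$ and appeals to the $\Ver_p$ fusion rules. Your approach is cleaner and more direct here; the paper's approach has the advantage of being phrased entirely in terms of the internal functor $\Fr[I]$, which is the central object of the reduced/perfect formalism, so it fits more organically into the surrounding narrative. A small remark on presentation: it is cleanest to take the canonical decomposition $\Fr(X)=\bigoplus_{a=1}^{p-1} M_a\boxtimes L_a$ (with some $M_a$ possibly zero) so that the $(k,l)$-isotypic component on the first side is exactly $N_{l,k}$, making the final vanishing immediate.
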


\begin{proof} Since $\C^*$ is a Serre subcategory, it is sufficient to prove that for a simple object
$X\in \C$, we have $\Fr(X)\in \C^*\bt \Ver_p$. Since $\Fr(X)$ is indecomposable, see Theorem~\ref{ThmPerf}(1),
we have $\Fr(X)=Y\bt L_i$ for some $1\le i< p$ and indecomposable $Y\in \C$.
Thus $\Fr[I](X)=Y\ot I(L_i)$ and $\Fr(Y)=Z\bt L_j$ for some $1\le j< p$.
Thus $(\Fr[I])^2(X)=Z\ot I(L_j\ot L_i)$, see Example~\ref{VerPerf}. Since $\Fr[I]$ is fully faithful, the latter object must be indecomposable, so in particular $L_j\ot L_i$ must be simple. This forces either $i$ or $j$ to be in $\{1,p-1\}$. If $j\in\{1,p-1\}$, we are
done by the definition of $\C^*$. If $i\in\{1,p-1\}$ then $X\in \C^*$, so $Y\in \C^*$ as a subquotient of 
$X^{\ot p}$.
\end{proof}

\begin{proposition}\label{PropRed}
Assume that $(\C,I)$ is reduced and $\C$ is of moderate growth. 
Then there is a field extension
$\kk \subset \KK$ and a fiber functor $\C \to (\Ver_p)_{\KK}$. 
\end{proposition}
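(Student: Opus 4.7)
The plan is to exploit the graded-category theory of the preceding subsection on the subcategory $\C^\ast$, and then lift a super fiber functor on $\C^\ast$ to a fiber functor on all of $\C$ by composing with the Frobenius functor.

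First, I record the good properties of $\C^\ast$. By Proposition~\ref{star prop}, $\Fr$ sends $\C$ into $\C^\ast\boxtimes\Ver_p$, so by equation~\eqref{enFr} and the discussion preceding Proposition~\ref{graded prop}, $\Fr^{en}$ restricted to $\C^\ast$ lands in $\C^\ast\boxtimes\Ver_p^{en\ast}$. By Theorem~\ref{ThmPerf}(1), the functors $\Fr$ and $\Fr^{en}$ are fully faithful on $\C$, hence on $\C^\ast$.

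Second, Proposition~\ref{graded prop} applied to $\C^\ast$ (in place of $\C$) produces a grading $\C^\ast=\bigoplus_T\C^\ast_T$ indexed by isomorphism classes of invertible objects of $\Ver_p^{en\ast}$. Since $\C^\ast\subset\C$ is of moderate growth and inherits Frobenius exactness, Corollary~\ref{Tann cor} shows that the neutral component $\C^\ast_\be$ is Tannakian. Corollary~\ref{graded str cor} then yields a field extension $\varphi_0:\kk\hookrightarrow\KK$ and a super fiber functor $S:\C^\ast\to\sVec_\KK$.

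Third, I construct the desired fiber functor on $\C$ as the composition
$$\C\;\xrightarrow{\Fr}\;\C^\ast\boxtimes\Ver_p\;\xrightarrow{(F_1,F_2)}\;(\Ver_p)_\KK,$$
where $F_1$ is $S:\C^\ast\to\sVec_\KK$ followed by the canonical embedding $\sVec_\KK\hookrightarrow(\Ver_p)_\KK$, where $F_2:\Ver_p\to(\Ver_p)_\KK$ is base change along $\varphi_0$, and $(F_1,F_2)$ is the symmetric tensor functor provided by the universal property of the Deligne tensor product. Since $(\C,I)$ reduced forces $\kk$ to be perfect, $\Frob:\kk\to\kk$ is a field automorphism and the composite is a $\varphi$-tensor functor for the field embedding $\varphi:=\varphi_0\circ\Frob$; this is the desired fiber functor.

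The main obstacle is the bookkeeping of the final step: one must verify that the Deligne-product combination of the $\varphi_0$-linear tensor functors $F_1$ and $F_2$ is well defined as a $\varphi_0$-tensor functor out of the $\kk$-linear Deligne product $\C^\ast\boxtimes\Ver_p$, and that precomposition with the $\Frob$-linear Frobenius yields an exact symmetric monoidal functor into $(\Ver_p)_\KK$.
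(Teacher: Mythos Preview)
Your argument is correct and follows essentially the same route as the paper: use Proposition~\ref{star prop} to land in $\C^\ast\boxtimes\Ver_p$, apply the graded-category results (Proposition~\ref{graded prop} and Corollaries~\ref{Tann cor}, \ref{graded str cor}) to $\C^\ast$ to obtain a super fiber functor $\C^\ast\to\sVe_{\KK}$, and then compose. The only point you leave implicit is that applying Proposition~\ref{graded prop} to $\C^\ast$ requires $(\C^\ast)^\ast=\C^\ast$; this is immediate (and the paper simply asserts it), since for $Y\in\C^\ast$ Proposition~\ref{star prop} gives $\Fr(Y)\in\C^\ast\boxtimes\Ver_p$ while $Y\in\C^\ast$ gives $\Fr(Y)\in\C\boxtimes\sVe$, and intersecting yields $\Fr(Y)\in\C^\ast\boxtimes\sVe$.
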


\begin{proof} By Proposition \ref{star prop} we have a $\Frob$-tensor functor $\C\to\C^*\bt \Ver_p^{en}$. The category $\C^*$ clearly satisfies $(\C^*)^*=\C^*$, has the property that $\Fr^{en}$ is fully faithful by Theorem~\ref{ThmPerf}(1) and is of moderate growth, so by Proposition \ref{graded prop} it is graded
with Tannakian neutral component, see Corollary \ref{Tann cor}. Thus by Corollary \ref{graded str cor} 
we have a fiber functor $\C^* \to \sVe_{\KK}$ for some field extension $\kk \subset \KK$. Summarizing,
we have the following composition of symmetric monoidal functors:
$$\C \to \C^*\bt \Ver_p\to (\sVe \bt \Ver_p)_{\KK}\to(\Ver_p)_{\KK},$$
where the last functor is just the tensor product. This is a $\iota$-tensor functor for the composite inclusion $\iota: \kk\subset\kk\subset\KK$, where the first inclusion is the Frobenius homomorphism.
\end{proof}

\begin{theorem}\label{ThmAllFields}
Let $\C$ be a Frobenius exact pre-Tannakian category of moderate growth. There exists a fiber functor $\C\to(\Ver_p)_{\KK}$ for some field extension $\kk\subset\KK$.
\end{theorem}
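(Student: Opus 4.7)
The plan is to reduce the general case to the reduced case that has already been handled in Proposition~\ref{PropRed}. The two ingredients that make this reduction painless are Theorem~\ref{ThmPerf}(2), which embeds any Frobenius exact pre-Tannakian category of moderate growth into a perfect one (in particular a reduced one), and the fact that a fiber functor into $(\Ver_p)_{\KK'}$ composed with a $\iota$-tensor functor is still a fiber functor (after enlarging the field).

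Concretely, I would proceed as follows. First, apply Theorem~\ref{ThmPerf}(2) to $\C$. Let $\KK$ denote the perfection $\varinjlim\kk$ of $\kk$ with inclusion $\iota:\kk\hookrightarrow\KK$. This produces a Frobenius exact pre-Tannakian category $\C'$ of moderate growth over $\KK$, a tensor functor $I:(\Ver_p)_{\KK}\to\C'$ such that $(\C',I)$ is perfect, and a $\iota$-tensor functor $G:\C\to\C'$. Since being perfect means $\Fr[I]$ is an equivalence, it is in particular fully faithful, so $(\C',I)$ is reduced in the sense of Definition~\ref{DefPerfect}.

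Second, apply Proposition~\ref{PropRed} to the reduced pair $(\C',I)$, which is legitimate since $\C'$ is of moderate growth over the perfect field $\KK$. This yields a field extension $\KK\subset\KK'$ and a fiber functor $F:\C'\to(\Ver_p)_{\KK'}$. The composite
\[
\C\;\xrightarrow{G}\;\C'\;\xrightarrow{F}\;(\Ver_p)_{\KK'}
\]
is then a $\kk$-linear exact symmetric monoidal functor, namely a fiber functor of $\C$ into $(\Ver_p)_{\KK'}$ for the composite field extension $\kk\hookrightarrow\KK\hookrightarrow\KK'$, which is exactly what is required.

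There is no genuine obstacle at this stage, since all the technical work has been carried out in the previous sections: Theorem~\ref{ThmPerf}(2) handles the passage to a perfect (hence reduced) enlargement without destroying Frobenius exactness or moderate growth, and Proposition~\ref{PropRed} provides the fiber functor under the reducedness hypothesis. The present theorem is then simply the composition of these two reductions. The only point worth checking is that perfection implies reducedness, which is immediate from Definition~\ref{DefPerfect}, and that exactness and the $\kk$-linear symmetric monoidal structure are preserved under composition, which is automatic.
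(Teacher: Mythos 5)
Your proposal is correct and follows exactly the paper's own argument: invoke Theorem~\ref{ThmPerf}(2) to pass to a perfect (hence reduced) enlargement $(\C',I)$ over the perfection of $\kk$, then apply Proposition~\ref{PropRed} to obtain the fiber functor and compose. No gaps.
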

\begin{proof}
Let $\C$ be a Frobenius exact pre-Tannakian category of moderate growth. By Theorem~\ref{ThmPerf}(2), there exists a $\iota$-tensor functor $\C\to\C'$, for some field extension $\iota:\kk\hookrightarrow\KK$ to some Frobenius exact pre-Tannakian category over $\KK$ of moderate growth equipped with a tensor functor $I:(\Ver_p)_{\KK}\to\C'$ which makes $(\C',I)$ perfect (in particular reduced). By Proposition~\ref{PropRed}, the latter admits a fiber functor $\C'\to(\Ver_p)_{\KK'}$ for some field extension $\KK\subset\KK'$. 
\end{proof}

\subsection{Algebraically closed fields}\label{acf} 
Now we prove Theorem~\ref{MainThm}.
\begin{theorem}\label{MainThmText}
Assume that $\kk$ is algebraically closed and let $\C$ be a Frobenius exact pre-Tannakian category of moderate growth. There exists a fiber functor $\C\to\Ver_p$, which is unique up to isomorphism.
\end{theorem}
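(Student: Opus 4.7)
The plan is to invoke Theorem~\ref{ThmAllFields} and sharpen it when $\kk$ is algebraically closed by tracing through the chain of constructions and checking that each intermediate field extension may be taken to be trivial. Applying Theorem~\ref{ThmAllFields} to $\C$ yields a fiber functor $F: \C \to (\Ver_p)_\KK$ for some extension $\kk \subset \KK$, and the task is to show $\KK$ can be taken to equal $\kk$.

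I would proceed in two steps. First, the proof of Theorem~\ref{ThmAllFields} invokes Theorem~\ref{ThmPerf}(2), whose field extension is the perfection of $\kk$. Since any algebraically closed field is perfect, this perfection is $\kk$ itself, so Theorem~\ref{ThmPerf}(2) produces, over $\kk$ itself, a Frobenius exact pre-Tannakian category $\C'$ of moderate growth equipped with $I: \Ver_p \to \C'$ making $(\C', I)$ perfect, together with a $\kk$-linear tensor functor $\C \to \C'$. Second, Proposition~\ref{PropRed} is applied to $(\C', I)$; inspecting its proof, the field extension enters only through Corollary~\ref{graded str cor}, which in turn invokes Proposition~\ref{PropGradTan}, whose sole use of a field extension is to produce a fiber functor on the Tannakian neutral component $\C'_\be$ (of moderate growth). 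However, over an algebraically closed field, Deligne's classical theorem on Tannakian categories provides a fiber functor $\C'_\be \to \Ve_\kk$ directly, so no extension of $\kk$ is required. Tracing the composition in the proof of Proposition~\ref{PropRed} yields $\C' \to \Ver_p$, and composing with $\C \to \C'$ gives the desired $F: \C \to \Ver_p$.

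For uniqueness, suppose $F_1, F_2: \C \to \Ver_p$ are two fiber functors. Each realises $\C$ as $\Rep_{\Ver_p}(G_i)$ for an affine group scheme $G_i = \underline{\Aut}_\otimes(F_i)$ in $\Ver_p$ compatible with the action of $\pi_1(\Ver_p)$, and by Tannakian reconstruction the two $G_i$ are canonically identified through $\C$. The isomorphism class of $F_2$ relative to $F_1$ is then measured by the $G_1$-torsor $T = \underline{\Isom}_\otimes(F_1, F_2)$ in $\Ver_p$, and $F_1 \simeq F_2$ is equivalent to $T$ admitting a global section $\be \to T$. Over an algebraically closed field, any torsor under an affine group scheme in $\Ver_p$ is trivial: one reduces to the Tannakian subcategory $\C_\be$ and invokes triviality of torsors under affine group schemes over algebraically closed fields (Lang's theorem in its general form for non-finite-type groups, via inverse limits), combined with rigidity of the grading supplied by Proposition~\ref{graded prop}.

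The main obstacle is the first Tannakian descent step: ensuring that the fiber functor on $\C'_\be$ can be taken over $\kk$ itself rather than over a transcendental extension. This rests on Deligne's classical result, applied here to a pre-Tannakian category of moderate growth (not assumed finitely generated) over algebraically closed $\kk$. A secondary delicate point is uniqueness, which requires rigorously establishing triviality of torsors under affine group schemes in $\Ver_p$ over $\kk$; conceptually this is clear, but a careful argument must handle the non-algebraic (pro-algebraic) case and the internal structure of $\Ver_p$, most likely by reducing to the Tannakian core $\C_\be$ where classical results apply.
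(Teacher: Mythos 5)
Your existence argument follows the paper's route: apply Theorem~\ref{ThmAllFields} and check that every intermediate field extension collapses when $\kk$ is algebraically closed (the perfection in Theorem~\ref{ThmPerf}(2) is trivial, and Deligne's argument gives a fiber functor for the Tannakian neutral component over $\kk$ itself). One point you pass over: the composite in Proposition~\ref{PropRed} begins with the Frobenius functor $\Fr$, which is only $\Frob$-semilinear, so even after setting $\KK=\kk$ what you obtain a priori is a $\Frob$-tensor functor $\C\to\Ver_p$ rather than a $\kk$-linear fiber functor; the paper repairs this by twisting the $\kk$-action on the target (possible precisely because $\kk$ is perfect). This is a fixable technicality, but it should be addressed.

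The uniqueness argument has a genuine gap. You reduce uniqueness to triviality of the torsor $\underline{\mathrm{Isom}}_\otimes(F_1,F_2)$ under an affine group scheme in $\Ver_p$ and then assert that such torsors are trivial over an algebraically closed field, citing Lang's theorem. But triviality of this torsor is literally equivalent to the statement $F_1\simeq F_2$ being proved, so asserting it is circular unless proved; Lang's theorem concerns connected algebraic groups over finite fields and is irrelevant here; and the classical triviality of torsors over algebraically closed fields applies to ordinary (super)group schemes, not to group schemes internal to $\Ver_p$, where no off-the-shelf result is available. Your proposed ``reduction to $\C_\be$'' is not carried out and is not straightforward, since the torsor does not live in the Tannakian part. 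The paper avoids all of this by a different device: for any fiber functor $F:\C\to\Ver_p$, compatibility of $F$ with $\Fr$ gives a commutative square identifying $\Phi\circ F$ (with $\Phi$ the Frobenius-twist autoequivalence of $\Ver_p$, the identity on objects) with the composite $(\otimes)\circ(F_0\boxtimes\id)\circ\Fr$, where $F_0:\C^\ast\to\sVec$ is the restriction of $F$. The latter composite depends only on $F_0$ and on intrinsic data of $\C$, and $F_0$ is unique up to isomorphism by Deligne's argument for super fiber functors over algebraically closed fields; hence $F$ is unique. You would need either to adopt this reduction or to actually develop a theory of torsors in $\Ver_p$, which the paper does not do.
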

\begin{proof}
By an argument due to Deligne, see \cite[\S 6.4]{Co}, since $\kk$ is algebraically closed, any Tannakian category has a fiber functor to the category of vector spaces over $\kk$.
Therefore, we can take $\KK=\kk$ in Proposition~\ref{PropGradTan} (in this case, by construction, $\cD$ is actually the de-equivariantization of $\C$ by the affine group scheme $G$ for which $\C=\Rep G$) and Corollary~\ref{graded str cor}. Furthermore, for algebraically closed fields, we can interpret the $\Frob$-functor $\C\to\C^*\bt \Ver_p^{en}$ in the proof of Proposition~\ref{PropRed} as a $\kk$-linear tensor functor to a tensor category by twisting the action on the target, see Section~\ref{Frob1}. Finally, $\kk\hookrightarrow\KK$ from Theorem~\ref{ThmPerf}(2) is also an isomorphism for algebraically closed fields, so as in the proof of Theorem~\ref{ThmAllFields} we actually get a fiber functor $\C\to\Ver_p$.

Also by an argument due to Deligne, see \cite[\S 6.4]{Co}, since $\kk$ is algebraically closed, any two fiber functors from a pre-Tannakian category to $\sVec$ are isomorphic. So consider any fiber functor $F:\C\to\Ver_p$. By Example~\ref{ExFrFun}(i), $\C^\ast$ is a super Tannakian category and $\Fr:\C\to\C\boxtimes\Ver_p$ actually takes values in $\C^\ast\boxtimes \Ver_p$. Denote by $F_0$ the restriction of $F$ to a fiber functor $\C^\ast\to\sVec$. As an instance of the commutative diagram in Section~\ref{SecFrComm}, we get a commutative diagram
$$\xymatrix{
\C\ar[rr]^{F}\ar[d]^{\Fr}&&\Ver_p\ar[d]^{\Fr}\\
\C^\ast\boxtimes\Ver_p\ar[rr]^{F_0\boxtimes\id} &&\sVec\boxtimes\Ver_p\ar[r]^-{\otimes}&\Ver_p.
}$$
The upper path composes to the functor
$\Phi\circ F$, where $\Phi$ is the additive equivalence of $\Ver_p$ which sends every object to itself and sends every morphism to its $p$-th power. On the other hand, by uniqueness of fiber functors to $\sVec$, the lower path is unique up to isomorphism. It follows that also $F$ is unique.
\end{proof}

\section{Applications}

For the entire section, we assume that $\kk$ is algebraically closed, of characteristic $p\ge 0$.

\subsection{Frobenius-Perron dimensions in pre-Tannakian categories of moderate growth} 

\subsubsection{} We can apply Theorem \ref{MainThmText} to define 
Frobenius-Perron dimensions in Frobenius exact (in particular, semisimple) pre-Tannakian categories of moderate growth which are not necessarily finite. We agree that ${\rm Ver}_0:=\sVec_{\kk}$.

We let $\cO_p$ denote the ring $\BZ[2\cos(\pi/{p})]$ if $p>0$ and $\cO_0=\BZ$. Note that $\cO_p$ equals $\BZ$ if and only if $p\le 3$. Set also $q=e^{\pi i/p}$ if $p>0$. If $p>2$ then a $\BZ$-basis of $\cO_p$ is given by

$$[m]_q:=\frac{q^m-q^{-m}}{q-q^{-1}}, \quad\mbox{with $1\le m\le \frac{p-1}{2}$}.$$

\begin{theorem}\label{ThmFP}
Let $\C$ be a pre-Tannakian category, Frobenius exact if $p>0$, of moderate growth.
\begin{enumerate}
\item The assignment $X\mapsto \gd(X)$ induces a ring homomorphism
$$\mathrm{Gr}(\C)\,\to\,\cO_p,\quad [X]\mapsto \gd(X)$$
and $\gd(X)\ge [\dim X]_q$, for $\dim X\in\mathbb{F}_p\simeq\{0,1,\cdots, p-1\}$.
Moreover, if $p>2$, $\gd(X)$ is a linear combination with non-negative integer coefficients of the $[m]_q$ with $1\le m\le \frac{p-1}{2}$.
\item If $\C$ is a finite tensor category, then $\gd(X)=\FPdim(X)$ is the Frobenius-Perron dimension of $X$ (\cite{EGNO}, Section 4.5).
\item For the fiber functor $F:\C\to\Ver_p$ (which exists and is unique), we have $\gd(X)=\FPdim(F(X))$.
\item For a symmetric tensor functor $H:\C\to\C'$, where $\C'$ satisfies the same conditions imposed on $\C$, we have $\gd(X)=\gd(H(X))$.
\item $\C$ is Tannakian if and only if $\gd(X)=\ad(X)$ for all $X\in\C$ if and only if $\ad$ is multiplicative ($\ad(X\otimes Y)=\ad(X)\ad(Y)$ for all $X,Y\in\C$).
\item $\C$ is super-Tannakian if and only if $\gd(X)\in\BZ$ (equivalently $\gd(X)\in\BN$) for all $X\in\C$.
\end{enumerate}
\end{theorem}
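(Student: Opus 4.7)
The plan is to derive every part from the unique fiber functor $F\colon \C \to \Ver_p$ produced by Theorem~\ref{MainThmText} (setting $\Ver_0 := \sVec$ and using Deligne's 2002 theorem when $p = 0$). The strategy is to prove (2) and (3) first, and then extract the remaining parts essentially formally.

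For (2), in a finite tensor category with simples $L_1, \ldots, L_r$, writing $[X^{\otimes n}] = \sum_i m_i^{(n)} [L_i]$ in $\mathrm{Gr}(\C)$ gives $\sum_i m_i^{(n)} \FPdim(L_i) = \FPdim(X)^n$ and $\ell(X^{\otimes n}) = \sum_i m_i^{(n)}$, yielding the two-sided bound
\[
\frac{\FPdim(X)^n}{\max_i \FPdim(L_i)} \;\le\; \ell(X^{\otimes n}) \;\le\; \frac{\FPdim(X)^n}{\min_i \FPdim(L_i)};
\]
taking $n$-th roots gives $\gd(X) = \FPdim(X)$. In particular $\gd(Y) = \FPdim(Y)$ for all $Y \in \Ver_p$.

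The technical heart is (3), that $\gd(X) = \FPdim(F(X))$. One inequality is easy: faithful exactness of $F$ gives $\ell(X^{\otimes n}) \le \ell(F(X)^{\otimes n})$, so $\gd(X) \le \FPdim(F(X))$ by (2). For the reverse, I would invoke Theorem~\ref{MainThm} to identify $\C$ with $\Rep(G)$ for an affine group scheme $G$ in $\Ver_p$, so that $F$ is the forgetful functor and admits a section $I\colon \Ver_p \hookrightarrow \C$ (trivial $G$-action). For the simple composition factors $S_1, \ldots, S_m$ of $X^{\otimes n}$ one has $\sum_i [F(S_i)] = [F(X)]^n$ in $\mathrm{Gr}(\Ver_p)$, hence $\sum_i \FPdim(F(S_i)) = \FPdim(F(X))^n$, and the desired lower bound $\gd(X) \ge \FPdim(F(X))$ reduces to a subexponential bound $\max_i \FPdim(F(S_i)) \le P(n)$ for a polynomial $P$. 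Establishing this bound on Frobenius--Perron dimensions of simple $G$-subrepresentations of $F(X)^{\otimes n}$, via the structure theory of $G$ inherited from the finite fusion ring $\mathrm{Gr}(\Ver_p)$, is the main obstacle of the theorem.

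Given (3), the remaining parts follow. For (1): $\gd = \FPdim \circ F_\ast$ is a composition of ring homomorphisms, and the bound $\gd(X) \ge [\dim X]_q$ is a direct verification in $\cO_p$ using $[p]_q = 0$ and $[p-i]_q = [i]_q$ applied to $F(X) = \sum a_i L_i$ with $\dim X \equiv \sum a_i i \pmod{p}$. For (4): $F_{\C'} \circ H\colon \C \to \Ver_p$ is a fiber functor, hence isomorphic to $F_\C$ by uniqueness, so $\gd(X) = \gd(H(X))$. For (5): exactness of $F$ gives $\ad(X) = \ad(F(X))$; $\gd(X) = \ad(X)$ everywhere forces $\FPdim(L_i) = [i]_q$ to equal $\ad(L_i) = i$ on every $L_i$ in the image of $F$ (Example~\ref{ExVerpad}), forcing $F(\C) \subseteq \Ve$ and $\C$ Tannakian. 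Multiplicativity of $\ad$ is handled similarly: in $\Rep(C_p)$ one has $\ad(L_i \otimes L_j) = ij$ precisely when $i + j \le p$, and iterating tensor products of any $L_i$ with $i \ge 2$ eventually produces projective summands, violating multiplicativity unless only $L_1$ occurs in $F(\C)$. For (6): since $[1]_q, \ldots, [(p-1)/2]_q$ is a $\BZ$-basis of $\cO_p$ (for $p > 2$) and $[p-i]_q = [i]_q$, the condition $\sum a_i [i]_q \in \BZ$ with $a_i \ge 0$ forces $a_i = 0$ for $2 \le i \le p-2$, so $F(X) \in \langle L_1, L_{p-1}\rangle = \sVec$ and $\C$ is super-Tannakian.
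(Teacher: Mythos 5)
Your architecture matches the paper's: establish $\gd=\FPdim$ on $\Ver_p$ (your part (2) is essentially the paper's Lemma~\ref{LemFP}), prove $\gd(X)=\FPdim(F(X))$, and then deduce (1), (4), (5), (6) more or less formally; those deductions, and the easy inequality $\gd(X)\le \FPdim(F(X))$, are all fine (modulo small care in (5) with $L_{p-1}$, which the paper handles by exhibiting $\ad(L_{p-1}^{\otimes 2})=1<(p-1)^2$). However, the reverse inequality in (3) is left unproved: you correctly reduce it to a subexponential bound on $\FPdim(F(V))$ for simple constituents $V$ of $X^{\otimes n}$, but then declare this "the main obstacle of the theorem" and stop. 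That is a genuine gap, and it sits exactly at the technical heart of the result. The route you gesture at (structure theory of the affine group scheme $G$ in $\Ver_p$) is not how the paper closes it, and the section $I:\Ver_p\hookrightarrow\C$ you invoke need not exist for a general $\C\simeq\Rep(\mathbb G,\varepsilon)$.

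The missing idea is the paper's Lemma~\ref{LemVinX}: for any simple constituent $V$ of $X^{\otimes n}$, the object $V^\ast\otimes V$ is a subquotient of $S^n(X^\ast\otimes X)$. This is proved by mapping into the coend $H=\mathcal{O}(\pi_1(\C))=\int^{Z}Z^\ast\otimes Z$; commutativity of $H$ gives an epimorphism $S^n(X^\ast\otimes X)\twoheadrightarrow \mathrm{im}\,\phi_{X^{\otimes n}}$, the inclusion $\mathrm{im}\,\phi_V\subset\mathrm{im}\,\phi_{X^{\otimes n}}$ holds since $V$ is a constituent of $X^{\otimes n}$, and $\phi_V$ is a monomorphism for simple $V$. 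Applying $F$ and invoking \cite[Proposition~11.1]{EOf}, which gives $\FPdim(S^nY)\le D_\epsilon(1+\epsilon)^n$ for $Y\in\Ver_p$ and every $\epsilon>0$, one obtains $\FPdim(FV)^2=\FPdim\bigl(F(V^\ast\otimes V)\bigr)\le D_\epsilon(1+\epsilon)^n$. Note this yields a bound of the form $C_\epsilon(1+\epsilon)^n$ for every $\epsilon>0$ rather than the polynomial bound you ask for, but that already suffices: it gives $\ell(F(X)^{\otimes n})^{1/n}\le (1+\epsilon)\,C_\epsilon^{1/n}\,\ell(X^{\otimes n})^{1/n}$, hence $\gd(FX)\le(1+\epsilon)\gd(X)$ for all $\epsilon>0$. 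Without an argument of this kind (or some substitute controlling the Frobenius--Perron dimensions of the simple constituents of $F(X)^{\otimes n}$ that actually come from $\C$), part (3) -- and with it parts (1), (4), (5), (6) as you derive them -- is not established.
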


Before proving Theorem \ref{ThmFP}, we need a couple of auxiliary results.

\begin{lemma}\label{LemVinX}
Consider a pre-Tannakian category $\C$, an object $X\in\C$ and a simple constituent $V$ of $X^{\otimes n}$ for $n\in\BN$. Then $V^\ast \otimes V$ is a subquotient of $S^n(X^\ast\otimes X)$.
\end{lemma}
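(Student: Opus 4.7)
I would prove this via an explicit subquotient construction inside $(X^\ast\otimes X)^{\otimes n}$. Write $V$ as a simple subquotient of $X^{\otimes n}$: there exist subobjects $A\subsetneq B\subseteq X^{\otimes n}$ with $V\cong B/A$. Dualizing, set $A^\perp:=(X^{\otimes n}/A)^\ast$ and $B^\perp:=(X^{\otimes n}/B)^\ast$ as subobjects of $(X^{\otimes n})^\ast\cong (X^\ast)^{\otimes n}$, so that $V^\ast\cong A^\perp/B^\perp$. The subobject $A^\perp\otimes B\subseteq (X^\ast)^{\otimes n}\otimes X^{\otimes n}$ admits a canonical surjection $q\colon A^\perp\otimes B\twoheadrightarrow V^\ast\otimes V$ with kernel $K:=B^\perp\otimes B+A^\perp\otimes A$ (obtained by tensoring the short exact sequences $0\to A\to B\to V\to 0$ and $0\to B^\perp\to A^\perp\to V^\ast\to 0$).

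Using the braiding isomorphism $(X^\ast)^{\otimes n}\otimes X^{\otimes n}\cong (X^\ast\otimes X)^{\otimes n}$, I would view $A^\perp\otimes B$ as a subobject of $(X^\ast\otimes X)^{\otimes n}$, and compose with the canonical coinvariants surjection $\pi\colon (X^\ast\otimes X)^{\otimes n}\twoheadrightarrow S^n(X^\ast\otimes X)$ to obtain a morphism $\alpha\colon A^\perp\otimes B\to S^n(X^\ast\otimes X)$ with image $I\subseteq S^n(X^\ast\otimes X)$ and kernel $L$. The central claim is the kernel comparison $L\subseteq K$. Once established, $q$ factors through $\alpha$, producing a surjection $I\twoheadrightarrow V^\ast\otimes V$; since $I$ is a subobject of $S^n(X^\ast\otimes X)$, this realizes $V^\ast\otimes V$ as a quotient of a subobject of $S^n(X^\ast\otimes X)$, hence as a subquotient.

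The principal obstacle is the kernel comparison $L\subseteq K$: one must show that every element of $A^\perp\otimes B$ killed by $\pi$ already lies in $B^\perp\otimes B+A^\perp\otimes A$. I would approach this via a Cauchy-type decomposition of $(X^\ast)^{\otimes n}\otimes X^{\otimes n}$ as an $S_n$-equivariant object under the diagonal action. In characteristic zero the identity $S^n(X^\ast\otimes X)\cong\bigoplus_{\lambda\vdash n}S^\lambda X^\ast\otimes S^\lambda X$ identifies $\ker\pi$ with the sum of the "off-diagonal" Schur-Weyl components $V_\mu\otimes V_\lambda\otimes S^\mu X^\ast\otimes S^\lambda X$ for $\mu\neq\lambda$ together with the trace-zero subspace inside each diagonal piece; the annihilator relations $B^\perp\perp B$ and $A^\perp\perp A$ (combined with the vanishing $\mathrm{tr}(f\otimes v)=f(v)=0$ whenever $f$ annihilates $v$) place $K$ precisely in these components, and a dimension count gives the equality $L=K$. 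In positive characteristic this direct sum decomposition is replaced by an $S_n$-equivariant filtration on $(X^\ast\otimes X)^{\otimes n}$ arising from the Jordan–H\"older series of $X^{\otimes n}$ in $\Rep(S_n,\C)$, and one must track the annihilator conditions through this filtration together with the behaviour of the (only right exact) coinvariants functor; this is the main technical hurdle.
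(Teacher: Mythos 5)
Your reduction is sound as far as it goes: the identification $V^\ast\cong A^\perp/B^\perp$, the surjection $q\colon A^\perp\otimes B\twoheadrightarrow V^\ast\otimes V$ with kernel $K=B^\perp\otimes B+A^\perp\otimes A$ (exactness of $\otimes$ in a tensor category makes this correct), and the observation that everything follows once $L:=\ker\alpha\subseteq K$ — all of that is fine. But the kernel comparison $L\subseteq K$ is precisely the content of the lemma, and you do not prove it. In characteristic zero your sketch invokes "a dimension count," which is not available in a general pre-Tannakian category: $\C$ need not be $\Ve$ or even have a fiber functor (think of $\sVe$ or Deligne's interpolation categories), so there are no dimensions of Hom- or weight-spaces to count, and the Schur--Weyl components you describe are components of $(X^\ast)^{\otimes n}\otimes X^{\otimes n}$ as an object of $\C$, not of a vector space. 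In positive characteristic — the case the paper actually needs in Section 8 — the Cauchy decomposition degenerates to a filtration, $S^n$ is only a coinvariants construction, and you explicitly defer the entire argument as "the main technical hurdle." As written, the proposal is a correct reformulation of the lemma, not a proof of it.

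The claim $L\subseteq K$ is true, and the clean way to see it is the paper's argument, which bypasses all Schur--Weyl bookkeeping: map everything into the commutative Hopf algebra $H=\int^{Z\in\C}Z^\ast\otimes Z=\cO(\pi_1(\C))$ in $\Ind\C$. Commutativity of $H$ makes the composite $(X^\ast\otimes X)^{\otimes n}\to H^{\otimes n}\to H$ factor through $S^n(X^\ast\otimes X)$, with image $\mathrm{im}\,\phi_{X^{\otimes n}}$; dinaturality of the maps $\phi_Y\colon Y^\ast\otimes Y\to H$ shows that $\phi_{X^{\otimes n}}$ restricted to your $A^\perp\otimes B$ equals $\phi_V\circ q$, hence $\mathrm{im}\,\phi_V\subseteq\mathrm{im}\,\phi_{X^{\otimes n}}$; and $\phi_V$ is a monomorphism for simple $V$ (it is the image under the exact functor $\C\boxtimes\C\to\C$ of a nonzero map out of the simple object $V^\ast\boxtimes V$). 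This gives $V^\ast\otimes V\cong\mathrm{im}\,\phi_V$ as a subobject of a quotient of $S^n(X^\ast\otimes X)$ — and, incidentally, yields your containment $L\subseteq\ker(\phi_V\circ q)=K$ for free. If you want to salvage your explicit construction, you should prove the kernel comparison by this route rather than by the filtration analysis you outline.
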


\begin{proof}
If $p=0$, we can use the decomposition
$$S^n(X^\ast\otimes X)\;\simeq\;\bigoplus_{\lambda\vdash n}\mathbb{S}_\lambda X^\ast\otimes \mathbb{S}_\lambda X,$$
where $\mathbb{S}_\lambda$ is the Schur functor corresponding to the partition $\lambda$. In general, we have the following argument. Following \cite[\S 7]{De},  we consider the commutative Hopf algebra in $\Ind\C$
$$H\;=\; \int^{Z\in\C} Z^\ast\otimes Z\; =\;\cO(\pi_1(\C)).$$
Since this is the coordinate algebra of the fundamental group $\pi_1(\C)$ of $\C$, every object in $\C$ has a natural co-action of $H$, compatible with the tensor product. For each $Y\in\C$, we consider the corresponding (non-zero when $Y\not=0$) co-algebra morphism $\phi_Y:Y^\ast\otimes Y\to H$, obtained by adjunction from the co-action (or directly from the definition of $H$). By commutativity of $H$ we have an epimorphism
$$S^n(X^\ast\otimes X)\twoheadrightarrow \mathrm{im} \phi_{X^{\otimes n}}\subset H.$$
Since $V$ is a subquotient of $X^{\otimes n}$, we have $\mathrm{im}\phi_V\subset \mathrm{im}\phi_{X^{\otimes n}}$. The claim now follows from the observation that $\phi_V$ is a monomorphism for simple $V$. Indeed, $\phi_V$ is the image under the (exact) tensor functor $\C\boxtimes\C\to\C$ of a morphism
$$V^\ast\boxtimes V\to  \int^{Z\in\C} Z^\ast\boxtimes Z$$ in $\C\boxtimes\C$, where the left object is simple.
\end{proof}

\subsubsection{}\label{UnionFinite}
We let $\cD$ be a pre-Tannakian category which is a union of pre-Tannakian categories with finitely many isomorphism classes of simple objects. In other words, for every $X\in\cD$, there are only finitely many simple objects which appear as constituents in $\{X^{\otimes i}\otimes X^{\ast j}\,|\, i,j\in\BN\}$. By \cite[Proposition~4.5.7]{EGNO}, we have a well-defined notion of Frobenius-Perron dimension of objects $X$ in $\cD$, which is the Frobenius-Perron dimension of $X$ in an arbitrary tensor subcategory with finitely many simple objects which contains $X$.

Of immediate interest is the very specific case of the fusion category $\cD=\Ver_p$, but we choose this level of generality to be able to include $$\Ver_{p^\infty}\;=\;\bigcup_{n>0}\Ver_{p^n}$$
from \cite{BEO} for future applications.

\begin{lemma}\label{LemFP}
For all $X\in\cD$, we have
$$\FPdim(X)\,=\,\gd(X).$$
\end{lemma}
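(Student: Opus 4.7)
The strategy is to reduce the statement to the case of a finite tensor category, where it is a standard consequence of Perron--Frobenius theory.

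Given $X\in\cD$, let $\cD_X\subset\cD$ denote the full subcategory whose objects have all composition factors among the simple constituents of $\{X^{\otimes i}\otimes (X^\ast)^{\otimes j}\}_{i,j\ge 0}$. By the hypothesis on $\cD$ recalled in \ref{UnionFinite}, this set of simples is finite; consequently $\cD_X$ is closed under $\otimes$, subquotients, and duals, has finitely many isomorphism classes of simple objects, and inherits finite-dimensional Hom-spaces from $\cD$, hence is a finite tensor category. By the very definition of $\FPdim$ on $\cD$ given in \ref{UnionFinite}, one has $\FPdim_\cD(X)=\FPdim_{\cD_X}(X)$.

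Next I would observe that the growth dimension also does not change under this restriction. Since $\cD_X$ is closed under subquotients inside $\cD$, every composition series of $X^{\otimes n}$ computed in $\cD$ lies entirely in $\cD_X$; hence $\ell_\cD(X^{\otimes n})=\ell_{\cD_X}(X^{\otimes n})$ for all $n$, and therefore $\gd_\cD(X)=\gd_{\cD_X}(X)$.

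It thus suffices to prove $\FPdim(X)=\gd(X)$ for $X$ in an arbitrary finite tensor category $\cC$. Let $N_X$ denote the non-negative integer matrix of left multiplication by $[X]$ on $\mathrm{Gr}(\cC)$ in the basis of isomorphism classes of simple objects. Writing $\mathbf{e}_{\be}$ for the basis vector of $\be$ and $\mathbf{1}$ for the functional summing all coordinates, one has
$$\ell(X^{\otimes n})=\langle \mathbf{1},\, N_X^n\,\mathbf{e}_{\be}\rangle.$$
By Perron--Frobenius, the spectral radius $\rho(N_X)$ equals the Perron eigenvalue of $N_X$, which is $\FPdim(X)$ by the characterization of $\FPdim$ (see \cite[Propositions~3.3.4--3.3.6 and~4.5.7]{EGNO}). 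The right and left Perron eigenvectors both have strictly positive entries (the right one being $(\FPdim Y)_Y$), so $\mathbf{e}_{\be}$ has nonzero component along the right Perron eigenspace and $\mathbf{1}$ pairs nontrivially with the orbit; combining this with Gelfand's formula $\lim_n\|N_X^n\|^{1/n}=\rho(N_X)$ yields $\lim_n \ell(X^{\otimes n})^{1/n}=\rho(N_X)=\FPdim(X)$, i.e. $\gd(X)=\FPdim(X)$.

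\textbf{Main obstacle.} The serious content is packaged into the finite case, which is essentially a citation to \cite{EGNO}; the only subtlety is ensuring that the spectral radius is \emph{realized} as the asymptotic of $\langle\mathbf{1},N_X^n\mathbf{e}_{\be}\rangle$, not merely an upper bound. This is exactly where positivity of both Perron eigenvectors is used, together with the fact that inside $\cD_X$ every simple appears as a constituent of some $X^{\otimes i}\otimes (X^\ast)^{\otimes j}$, which is guaranteed by the construction of $\cD_X$.
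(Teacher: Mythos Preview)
Your proposal is correct and follows the same overall strategy as the paper: pass to a finite tensor subcategory containing $X$ and then exploit Perron--Frobenius. The paper's execution of the second step is more elementary and avoids the subtlety you flag: rather than invoking Gelfand's formula and eigenspace components, it simply uses that $1\le\FPdim(L)\le d:=\max_i\FPdim(L_i)$ for every simple $L$, so additivity and multiplicativity of $\FPdim$ immediately give the sandwich $\FPdim(X)^n\le d\cdot\ell(X^{\otimes n})\le d\cdot\FPdim(X)^n$, and taking $n$-th roots finishes without any further spectral theory. This is in fact the concrete content of your ``positive left and right Perron eigenvector'' remark, since the vector $(\FPdim L_i)_i$ is the positive left eigenvector of $N_X$ with eigenvalue $\FPdim(X)$.
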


\begin{proof}
Let $d\in\BR$ be the maximal value the Frobenius-Perron dimension takes on a simple object in a tensor category $\C\subset\cD$ which contains $X$ and only has finitely many simple objects. Since the Frobenius-Perron dimension of a simple object is at least $1$, we find that for $n\in\BN$
$$\FPdim(X)^n\;\le\;d\cdot \ell(X^{\otimes n})\;\le\;d\cdot \FPdim(X)^n.$$
Hence
$$d^{-\frac{1}{n}}\FPdim(X)\;\le\;\ell(X^{\otimes n})^{\frac{1}{n}}\;\le\;\FPdim(X),$$
and taking the limit yields the result.
\end{proof}

\begin{lemma}\label{LemClaim}
Consider a pre-Tannakian category $\C$ and a symmetric tensor functor $F:\C\to\cD$, with $\cD$ as in \ref{UnionFinite}. For every object $X\in\C$ and $\epsilon>0$, there exists $C_\epsilon \in\BR$ such that, for every $n\in\BN$ and simple constituent $V$ of $X^{\otimes n}$, we have $\FPdim (FV)\le C_\epsilon(1+\epsilon)^n$.
\end{lemma}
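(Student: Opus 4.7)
The plan is to combine Lemma~\ref{LemVinX} with a polynomial growth estimate for symmetric powers in $\cD$, and then conclude by taking square roots.

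Since $V$ is simple in $\C$, Lemma~\ref{LemVinX} gives that $V^{\ast}\otimes V$ is a subquotient of $S^n(X^{\ast}\otimes X)$. The functor $F$ is exact and symmetric monoidal, so it commutes with the formation of $S^n$ and preserves subquotients. Setting $Y:=F(X)^{\ast}\otimes F(X)$, we deduce that $F(V)^{\ast}\otimes F(V)$ is a subquotient of $S^n Y$ in $\cD$. Frobenius--Perron dimension on $\cD$ is well-defined by the discussion in \ref{UnionFinite}; it is additive on short exact sequences and multiplicative on tensor products, and in particular it is monotone on subquotients, giving
$$
\FPdim(F(V))^{2}\;=\;\FPdim(F(V)^{\ast}\otimes F(V))\;\le\;\FPdim(S^n Y).
$$

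The claim is thus reduced to showing that, for each fixed $Y\in\cD$, the sequence $\FPdim(S^n Y)$ grows at most polynomially in $n$. Polynomial growth is plainly dominated by $C_\epsilon^{2}(1+\epsilon)^{2n}$ for any $\epsilon>0$ and a suitable constant $C_\epsilon$, and taking square roots in the displayed inequality then yields the bound $\FPdim(F(V))\le C_\epsilon(1+\epsilon)^{n}$ claimed in the lemma. To establish the polynomial bound on $\FPdim(S^n Y)$, pick a finite tensor subcategory $\cD_{0}\subset\cD$ containing $Y$. The standard isomorphism $\Sym(A\oplus B)\simeq \Sym A\otimes \Sym B$, together with a filtration argument in the non-semisimple case, reduces the task to showing that $\FPdim(S^{k}L)$ grows polynomially in $k$ for each of the (finitely many) simple objects $L$ of $\cD_{0}$ appearing in $Y$.

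The main obstacle is this last estimate. It is a Hilbert-series type statement for the graded commutative algebra object $\Sym L=\bigoplus_k S^k L$ in $\cD_{0}$, which is finitely generated in degree one by the single simple object $L$; a categorical analogue of the classical fact that a finitely generated graded commutative $\kk$-algebra has polynomial Hilbert function is expected to force the Hilbert function measured by FPdim in $\cD_{0}$ to grow polynomially as well. In the main case of interest, $\cD=\Ver_p$, this can in addition be verified directly using the known fusion rules for $\Rep C_p$ and the fact that semisimplification annihilates the projective indecomposable $L_p$, which causes many summands of $S^k L$ to disappear and leaves only a bounded (in fact, often constant) contribution in each degree.
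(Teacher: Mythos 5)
Your reduction is exactly the one the paper uses: apply Lemma~\ref{LemVinX}, transport through $F$ (exact symmetric monoidal functors commute with $S^n$ and preserve subquotients), use $\FPdim(F(V)^\ast\otimes F(V))=\FPdim(FV)^2$, and thereby reduce everything to a growth bound on $\FPdim(S^nY)$ for a fixed $Y\in\cD$. Up to that point the argument is correct and complete. Note, though, that only a \emph{subexponential} bound $\FPdim(S^nY)\le D_\epsilon(1+\epsilon)^n$ is needed after taking square roots, not a polynomial one.

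The gap is precisely at the step you flag as the "main obstacle": you do not prove the growth estimate, you only assert that a categorical analogue of the Hilbert-function argument "is expected to" give polynomial growth of $\FPdim(S^kL)$. This estimate \emph{is} the substantive content of the lemma once the formal reduction is done, so leaving it as an expectation leaves the proof incomplete. The paper closes this step by citing \cite[Proposition~11.1]{EOf}, which establishes the (weaker, but sufficient) subexponential bound for objects of a finite symmetric tensor category; that result is not a routine Hilbert-series computation, since $\Sym Y$ need not be Noetherian or finitely generated over anything obvious in a general finite symmetric tensor category, and I would not take polynomial growth for granted beyond the cases you can check by hand. Your reduction to simple objects via $\Sym(A\oplus B)\simeq\Sym A\otimes\Sym B$ plus a filtration argument also needs care, since $S^n$ is only right exact; one gets that $S^n(\mathrm{gr}\,Y)$ surjects onto the associated graded of $S^nY$, which does give the inequality in the needed direction, but this should be said. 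Your direct verification in $\Ver_p$ (using $S^pL_i=0$ for $i>1$, so that $\Sym L_i$ is bounded and only the unit contributes in all degrees) is correct and suffices for the application to Theorem~\ref{ThmFP}(3), but the lemma is stated for arbitrary $\cD$ as in \ref{UnionFinite} (e.g.\ $\Ver_{p^\infty}$), and for that generality you must either prove the subexponential bound or invoke \cite[Proposition~11.1]{EOf}.
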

\begin{proof}
Since $ \FPdim (F(V^\ast \otimes V))$ is the square of $\FPdim (FV)$, via Lemma~\ref{LemVinX} it suffices to show that for every $Y\in\cD$, there exists $D_\epsilon\in\BR$ such that $\FPdim (S^nY)\le D_\epsilon (1+\epsilon)^n  $ for all $n\in\BN$.
The latter follows from \cite[Proposition~11.1]{EOf}.
\end{proof}

\begin{lemma}\label{PropFP1}
For a pre-Tannakian category $\C$ and a symmetric tensor functor $F:\C\to\cD$, with $\cD$ as in \ref{UnionFinite}, we have
$$\gd (X)\;=\;\gd(FX),\qquad\mbox{for all $X\in\C$.}$$

\end{lemma}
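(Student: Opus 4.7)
The plan is to prove the two inequalities $\gd(FX)\ge \gd(X)$ and $\gd(FX)\le \gd(X)$ separately, using Lemma~\ref{LemFP} to translate the growth rate on the target side into a Frobenius--Perron dimension so that Lemma~\ref{LemClaim} can be applied.

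The easy direction is $\gd(FX)\ge \gd(X)$. A symmetric tensor functor between pre-Tannakian categories is automatically faithful: if $X\ne 0$, the coevaluation $\be\hookrightarrow X\otimes X^\ast$ is injective since $\be$ is simple, and its image under the exact monoidal functor $F$ is the coevaluation in $\cD$, which is nonzero, forcing $FX\ne 0$; faithfulness on morphisms then follows from exactness plus preservation of images. Consequently any Jordan--H\"older filtration of $X^{\otimes n}$ in $\C$ maps to a strict filtration of $(FX)^{\otimes n}$ in $\cD$, yielding $\ell((FX)^{\otimes n})\ge \ell(X^{\otimes n})$. Taking $n$-th roots and passing to the limit gives the inequality.

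The substantive direction is $\gd(FX)\le \gd(X)$. Fix a composition series of $X^{\otimes n}$ in $\C$ with simple quotients $V_1,\ldots, V_N$, where $N=\ell(X^{\otimes n})$. Since $\FPdim$ is a ring homomorphism on $\mathrm{Gr}(\cD)$ and $F$ is exact monoidal, the identity $\FPdim(FX)^n=\FPdim(F(X^{\otimes n}))$ yields
$$\FPdim(FX)^n\;=\;\sum_{i=1}^N\FPdim(FV_i)\;\le\;\ell(X^{\otimes n})\cdot\max_i\FPdim(FV_i).$$
For any $\epsilon>0$, Lemma~\ref{LemClaim} furnishes a constant $C_\epsilon$ (independent of $n$) with $\FPdim(FV_i)\le C_\epsilon(1+\epsilon)^n$ for every simple constituent. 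Taking $n$-th roots gives
$$\FPdim(FX)\;\le\;\ell(X^{\otimes n})^{1/n}\cdot C_\epsilon^{1/n}(1+\epsilon),$$
and letting $n\to\infty$ followed by $\epsilon\to 0$ produces $\FPdim(FX)\le \gd(X)$. Combining with $\FPdim(FX)=\gd(FX)$ from Lemma~\ref{LemFP} concludes the argument.

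The main obstacle is entirely absorbed into Lemma~\ref{LemClaim}: without a uniform sub-exponential bound $\FPdim(FV_i)\le C_\epsilon(1+\epsilon)^n$ on the Frobenius--Perron dimensions of individual simple constituents of $X^{\otimes n}$, the extra factor produced by $\max_i\FPdim(FV_i)$ could not be absorbed after extracting $n$-th roots. Once that control is in hand, the rest is a routine growth-rate manipulation.
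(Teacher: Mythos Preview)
Your proof is correct and follows essentially the same route as the paper: the easy inequality is immediate from faithfulness and exactness, and the substantive inequality is obtained by decomposing $X^{\otimes n}$ into simples, invoking Lemma~\ref{LemClaim} to bound the Frobenius--Perron dimensions of their images, and absorbing the resulting factor after taking $n$-th roots. The only cosmetic difference is that the paper bounds $\ell(FX^{\otimes n})$ directly via $\ell(FV)\le \FPdim(FV)$, whereas you work with $\FPdim(FX)^n$ throughout and convert at the end using Lemma~\ref{LemFP}; the content is the same.
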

\begin{proof}
Clearly, we have $\gd(X)\le \gd(FX)$. On the other hand, we have
$$\ell(FX^{\otimes n})=\sum_{V} [X^{\otimes n}:V] \ell(FV)\le \sum_{V} [X^{\otimes n}:V] \FPdim(FV),$$
where the sum runs over the simple objects $V$ which appear as constituents in $X^{\otimes n}$.
Hence, by Lemma~\ref{LemClaim}, for every $\epsilon>0$ there exists $C_\epsilon \in\BR$ 
$$\ell(FX^{\otimes n})^{\frac{1}{n}}\;\le\;   (1+\epsilon)\ell(X^{\otimes n})^{\frac{1}{n}} C_\epsilon^{\frac{1}{n}}.$$
Thus $\gd(FX)$ is bounded by $(1+\epsilon)\gd(X)$ for every $\epsilon>0$, so $\gd(FX)\le \gd(X)$.
\end{proof}

\begin{proof}[Proof of Theorem~\ref{ThmFP}]
Part (3) follows from the combination of Lemmas~\ref{LemFP} and~\ref{PropFP1}.

For $p>0$, recall from \cite{EOV} that the Frobenius-Perron dimension yields a ring homomorphism
$$\FPdim:\;\mathrm{Gr}(\Ver_p)\;\twoheadrightarrow\; \BZ[2\cos(\pi/{p})],   \quad [L_r]\mapsto [r]_{q}.$$
Note that the function $\BZ\to \BR$, $n\mapsto |[n]_q|$ is subadditive, since $|\sin (x)|$ is subadditive on $\BR$. The function factors through the ring homomorphism $\BZ\twoheadrightarrow\mathbb{F}_p$ which thus implies that the function
$$[\cdot]_q:\mathbb{F}_p\to\BR_{\ge 0},\;\, n\mapsto [n]_q=|[n]_q|$$
satisfies $[i+j]_q\le [i]_q+[j]_q$ for all $i,j\in\mathbb{F}_p$ (with the sum $i+j$ taken in $\mathbb{F}_p$). Since $\FPdim(L)=[\dim L]_q$ for each simple object $L\in\Ver_p$, it follows that $\FPdim(X)\ge[\dim X]_q$ for all $X\in\Ver_p$.

By the above paragraph, part (3) implies part (1). Since the Frobenius-Perron dimension is preserved by tensor functors between finite tensor categories, part (3) also implies part (2). Part (4) similarly follows from part (3) by considering a fiber functor $\C'\to\Ver_p$.

Part (6) is only relevant if $p>2$. By the above description of the Frobenius-Perron dimension on $\Ver_p$, part (3) shows that a fiber functor $F:\C\to\Ver_p$ takes values in the category of supervector spaces if and only if $\gd$ takes values in the integers. By part (6) and part (3), to prove the first characterisation in part (5) it suffices to observe that for a supervector space $W$, we have $\gd(W)=\ad(W)$ if and only if $W$ is concentrated in degree $0$, see Example~\ref{ExVect}.

The second characterisation in part (5) is trivial $p=2$. We observe that 
$$1=\ad(L_{p-1}^{\otimes 2})\;<\; (\ad L_{p-1})^2=(p-1)^2,\quad\mbox{if $p>2$},$$
$$4=\ad(L_{p-2}^{\otimes 2})\;<\; (\ad L_{p-2})^2=(p-2)^2,\quad\mbox{if $p>3$}.$$
Lemma~\ref{LemXY} thus shows that if $\ad(X^{\otimes n})=\ad(X)^n$ for all $n$, for some $X\in \Ver_p$, then $X\in\Ve$, from which the claim follows.
\end{proof}

\begin{conjecture}\label{semiconj} In any pre-Tannakian category 
of moderate growth the map $V\mapsto \gd(V)$ defines a homomorphism ${\rm Gr}(\mathcal C)\to \Bbb R$, and $\gd(V)$ is an algebraic integer.  
\end{conjecture}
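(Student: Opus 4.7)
My plan splits the conjecture into the well-definedness/additivity part, the multiplicativity part, and the algebraic-integer part; I expect the multiplicativity step to be the true obstacle.

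\textbf{Step 1: well-definedness and additivity.} Because length is additive on short exact sequences and the tensor product in $\C$ is biexact, the composition factors (with multiplicity) of $V^{\otimes n}$ depend only on $[V] \in \mathrm{Gr}(\C)$; hence $\ell(V^{\otimes n})$ and therefore $\gd(V)$ are invariants of $[V]$. Lemma~\ref{LemXY2}(2) already gives $\gd(X \oplus Y) \geq \gd(X) + \gd(Y)$. For the reverse bound, expand
$$\ell((X \oplus Y)^{\otimes n}) \;=\; \sum_{k=0}^{n} \binom{n}{k}\,\ell(X^{\otimes k})\,\ell(Y^{\otimes n-k})$$
and use moderate growth to bound each term by $C_\varepsilon^2(\gd X + \varepsilon)^k(\gd Y + \varepsilon)^{n-k}$; the binomial theorem gives a bound of $C_\varepsilon^2(\gd X + \gd Y + 2\varepsilon)^n$, and taking $n$-th roots and letting $\varepsilon \to 0$ yields equality. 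Additivity on the Grothendieck ring then follows because for a short exact sequence $0\to A\to B\to C\to 0$ the biexactness of $\otimes$ forces $B^{\otimes n}$ and $(A\oplus C)^{\otimes n}$ to have identical composition factors.

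\textbf{Step 2: multiplicativity.} Lemma~\ref{LemXY} supplies only $\gd(X \otimes Y) \geq \gd(X)\gd(Y)$. The natural object of study is left multiplication by $[V]$ on the countable $\mathbb N$-submodule $M_V \subset \mathrm{Gr}(\C)$ spanned by the simple constituents of $\{V^{\otimes i}\otimes V^{*\otimes j}\}_{i,j\ge 0}$; this is encoded by a non-negative integer matrix $T$ each of whose columns is finitely supported, and moderate growth says that $\mathbf{1}^{\top} T^{n} e_{\be}$ grows exponentially with rate $\gd V$. I would try to identify $\gd(V)$ with the spectral radius of $T$ by a non-symmetric Perron--Frobenius argument on an appropriate completion of $M_V$, and then compare spectral radii of $T_X \otimes T_Y$ with that of $T_{X \otimes Y}$ to obtain the reverse bound.

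\textbf{Step 3: algebraicity and main obstacle.} Granting multiplicativity, if the subring of $\mathrm{Gr}(\C)$ generated by $[V]$ and $[V^*]$ were finitely generated as an abelian group, classical Perron--Frobenius applied to the finite matrix of left multiplication by $[V]$ would give $\gd(V)$ as an algebraic integer; in general one would try to realise $\gd(V)$ as a limit of algebraic integers $\lambda_N$ coming from finite Serre tensor subcategories and control the degrees. A far cleaner route, however, is to produce a symmetric tensor functor $\C \to \mathcal D$ into a ``tame'' ambient category such as $\Ver_{p^\infty}$ (\cite{BEO}) or a suitable ultraproduct (\cite{Harman}), for then Lemma~\ref{LemFP} and Lemma~\ref{PropFP1} combined with Theorem~\ref{ThmFP} would yield the full conjecture in one stroke. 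The essential difficulty is precisely this: in the Frobenius exact case of Theorem~\ref{ThmFP} the upper bound on $\gd(X \otimes Y)$ is extracted only \emph{after} the fiber functor to $\Ver_p$ has been constructed, and without some analogous ambient category no purely combinatorial route to multiplicativity is presently known. I therefore expect any proof of Conjecture~\ref{semiconj} to either settle a fiber-functor problem for non-Frobenius-exact categories or develop a genuinely new non-negative operator theory on infinite Grothendieck rings of pre-Tannakian categories.
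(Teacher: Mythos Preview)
The statement you are attempting to prove is a \emph{conjecture} in the paper, not a theorem: the paper does not give a proof, and indeed in the Remark immediately following Conjecture~\ref{semiconj} the authors state only that it holds in characteristic zero and in the Frobenius exact case (both via Theorem~\ref{ThmFP}), and that in general it would follow from another open conjecture, namely \cite[Conjecture~1.4]{BEO}, through Lemmas~\ref{LemFP} and~\ref{PropFP1}. That latter conjecture asserts precisely the existence of a fiber functor into $\Ver_{p^\infty}$, so your Step~3 diagnosis is exactly in line with the paper: the known route is to produce a symmetric tensor functor into a tame target and then invoke Lemma~\ref{PropFP1}, and the obstruction is that no such functor is currently available beyond the Frobenius exact case.

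Your Step~1 is a genuine (and elementary) contribution beyond what the paper records: the paper derives additivity only as a byproduct of the fiber functor in the cases where the conjecture is known, whereas your direct argument via biexactness and the binomial expansion establishes $\gd(X\oplus Y)=\gd(X)+\gd(Y)$ and $\gd(W)=\gd(\mathrm{gr}\,W)$ unconditionally for any pre-Tannakian category of moderate growth. But your Step~2 is, as you yourself acknowledge, not a proof: the proposed Perron--Frobenius argument on an infinite non-negative matrix is only a sketch, and the comparison of spectral radii of $T_X\otimes T_Y$ with $T_{X\otimes Y}$ is precisely the missing multiplicativity inequality $\gd(X\otimes Y)\le\gd(X)\gd(Y)$ in another guise. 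So the proposal does not prove the conjecture, and correctly identifies why; this matches the paper's own stance.
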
 

\begin{remark} \begin{enumerate}
\item Theorem \ref{ThmFP} shows that Conjecture \ref{semiconj} holds in characteristic zero and for Frobenius exact categories in positive characteristic. 

\item Conjecture \ref{semiconj} follows from Lemmas~\ref{LemFP} and~\ref{PropFP1} together with \cite[Conjecture~1.4]{BEO}. 
Moreover, \cite[Conjecture 1.4]{BEO}  constrains the precise form of algebraic integers 
$\gd(V)$, which turn out to be somewhat more general than 
in the Frobenius exact case (namely, they are Frobenius-Perron dimensions of objects in $\Ver_{p^n}$ rather than $\Ver_p$).
\end{enumerate}
\end{remark}  

\subsubsection{} Let $\CC$ be a Frobenius exact pre-Tannakian category of moderate growth over $\kk$, ${\rm char}(\kk)=p>0$, and let $F: \C\to \Ver_p$ be its fiber functor. Denote by $\CC_{\rm int}\subset \CC$ the full subcategory of objects with integer $\gd(X)$. Then by Theorem \ref{ThmFP}, $\CC_{\rm int}\subset \CC$ is a super-Tannakian tensor subcategory which coincides with $\C$ if $p=2,3$. 

\begin{proposition}\label{coinci} We have $\C_{\rm int}=\C^*$. 
\end{proposition}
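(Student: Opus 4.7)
The plan is to chain a sequence of equivalences relating the four conditions: $X \in \C_{\mathrm{int}}$, $F(X) \in \sVec$, $\Fr_{\Ver_p}(F(X)) \in \Ver_p \boxtimes \sVec$, and $\Fr_{\C}(X) \in \C \boxtimes \sVec$. For $p \leq 3$ the statement is immediate since $\C^* = \C$ by definition, and either $\Ver_2 = \Ve$ or $\Ver_3 = \sVec$ forces $\FPdim(F(X)) \in \BZ$. So I focus on $p \geq 5$.

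First, I would show that $X \in \C_{\mathrm{int}}$ iff $F(X) \in \sVec$. By Theorem~\ref{ThmFP}(3) we have $\gd(X) = \FPdim(F(X))$. Writing $F(X) = \bigoplus_i n_i L_i$, we get $\FPdim(F(X)) = \sum_i n_i [i]_q$. Using $[r]_q = [p-r]_q$ and the fact that $\{[m]_q : 1 \leq m \leq (p-1)/2\}$ is a $\BZ$-basis of $\cO_p$ with only $[1]_q = 1$ being an integer, we find $\FPdim(F(X)) \in \BZ$ iff $n_i = 0$ for all $i \notin \{1, p-1\}$, i.e., iff $F(X) \in \sVec$.

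Next, I would show that $F(X) \in \sVec$ iff $\Fr_{\Ver_p}(F(X)) \in \Ver_p \boxtimes \sVec$. Inspecting the formulas in Example~\ref{ExFrFun}(i), $\Fr(L_i)$ lies in $\Ver_p \boxtimes \sVec$ iff $i \in \{1, p-1\}$; by semisimplicity and additivity this extends to arbitrary objects. Then, by the commutative diagram in Section~\ref{SecFrComm} applied to the fiber functor $F:\C \to \Ver_p$, we have
$$(F \boxtimes \id) \circ \Fr_{\C} \;=\; \Fr_{\Ver_p} \circ F,$$
so $\Fr_{\Ver_p}(F(X)) \in \Ver_p \boxtimes \sVec$ iff $(F \boxtimes \id)(\Fr_{\C}(X)) \in \Ver_p \boxtimes \sVec$.

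Finally, I need to descend this last condition from $\Ver_p \boxtimes \Ver_p$ to $\C \boxtimes \Ver_p$. Since $\Ver_p$ is semisimple with one-dimensional endomorphism algebras on simples, every object $Z \in \C \boxtimes \Ver_p$ decomposes canonically as $Z = \bigoplus_{i=1}^{p-1} Y_i \boxtimes L_i$ with $Y_i \in \C$, and $(F \boxtimes \id)(Z) = \bigoplus_i F(Y_i) \boxtimes L_i$. By faithfulness of $F$, $F(Y_i) = 0$ iff $Y_i = 0$, so $(F \boxtimes \id)(Z) \in \Ver_p \boxtimes \sVec$ iff $Y_i = 0$ for $i \notin \{1, p-1\}$ iff $Z \in \C \boxtimes \sVec$. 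Applying this to $Z = \Fr_{\C}(X)$ closes the chain and yields $X \in \C^*$. I do not anticipate a genuine obstacle here; the content is entirely bookkeeping once Theorem~\ref{ThmFP}(3), Example~\ref{ExFrFun}(i) and the naturality square for $\Fr$ under tensor functors are in hand.
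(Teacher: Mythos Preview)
Your proof is correct and uses the same ingredients as the paper: Theorem~\ref{ThmFP}(3), Example~\ref{ExFrFun}(i), the naturality square in Section~\ref{SecFrComm}, and faithfulness of $F$. The organization differs slightly. You set things up as a single chain of equivalences passing through the intermediate condition $F(X)\in\sVec$; the paper instead treats the two inclusions asymmetrically. For $\C^\ast\subset\C_{\rm int}$ the paper runs exactly your chain backward. For $\C_{\rm int}\subset\C^\ast$ the paper takes a shortcut: it applies Theorem~\ref{ThmFP} to the tensor functor $\Fr:\C\to\C\boxtimes\Ver_p$ to get $\gd(\Fr(X))=\gd(X)\in\BZ$, and then argues directly that an object of $\C\boxtimes\Ver_p$ with integer $\gd$ must lie in $\C\boxtimes\sVec$. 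Your route has the advantage of making every step a transparent biconditional, so nothing implicit is left in the passage from integrality of $\gd$ to landing in the $\sVec$-component; the paper's route is a line shorter but leaves that last step to the reader.
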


\begin{proof} For $p=2,3$ both categories coincide with $\C$, so we may assume $p>3$. Suppose $X\in \C_{\rm int}$. Then by Theorem \ref{ThmFP} $\gd({\rm Fr}(X))=\gd(X)$ is an integer, so 
we must have ${\rm Fr}(X)\in \C\boxtimes \sVec$, thus $X\in \C^*$. Conversely, if $X\in \C^*$ then 
${\rm Fr}(X)\in \C\boxtimes \sVec$, so ${\rm Fr}(F(X))\in \Ver_p\boxtimes \sVec$. Hence 
$F(X)\in \sVec$, so by Theorem \ref{ThmFP} $\gd(X)=\gd(F(X))\in \Bbb Z$ and $X\in \C_{\rm int}$. 
\end{proof}

\subsubsection{} Assume now that $\C$ is finitely generated. 
We will represent $\C$ as the equivariantization of a finite symmetric tensor category 
with respect to the action of an affine group scheme of finite type over $\bold k$. If $p=2$ then we have shown in Corollary \ref{peq2} that $\CC$ is Tannakian, i.e.,  $\CC=\Rep G={\rm Vec}_\kk^G$ for an affine group scheme $G$ of finite type over $\kk$, so we may assume that $p>2$. The fiber functor $\C\to\Ver_p$ allows us to define a homomorphism $\varepsilon:\pi_1(\Ver_p)\to \mathbb G$ between affine group schemes of finite type in $\Ver_p$ (we refer to \cite{Ve} for details on the theory of such affine group schemes), such that $\C\simeq\Rep(\mathbb G,\varepsilon)$. Then we define $G$ as the maximal ordinary subgroup of $\mathbb G$. Its coordinate ring is defined from $\mathcal{O}(\mathbb G)$ by taking the quotient with respect to the ideal generated by the maximal subobjects which is a direct sums of objects $L_i,1<i$ in $\Ver_p$. Alternatively, $G$ is the restriction of the functor $\mathbb G$ to the category of ordinary algebras.

We then have a homomorphism $\mathbb G\to G^{(1)}$ to the Frobenius twist of $G$, which comes from 
$$\Fr_+\cO(\mathbb G)\hookrightarrow S^p\cO(\mathbb G)\to\cO(\mathbb G),$$
where the second morphism is multiplication. Indeed, we have $\Fr_+\cO(G)\simeq\cO(G)^{(1)}$ and, using that the $p$-th symmetric power of every $L_i,i>1$ in $\Ver_p$ vanishes, it follows that $\Fr_+J$ maps to zero in the above morphism, with $J$ the ideal in $\mathcal{O}(\mathbb G)$ defining $\mathcal{O}(G)$.
We denote the kernel and image of $\mathbb G\to G^{(1)}$ by $\mathbb G_1$ and $G_+$ (in particular, $G_+= G^{(1)}$ if $G$ is reduced). It follows that ${G_+}$ is an affine group scheme of finite type, that $\mathbb{G}_1$ is a finite group scheme in $\Ver_p$ and that $\varepsilon$ factors through $\mathbb{G}_1$. The following proposition then follows
from standard properties of equivariantization and de-equivariantization, see \cite{EGNO}.\footnote{To be more precise, the book \cite{EGNO} considers (de)equivariantization only for abstract group actions, but the theory for actions of affine group schemes is completely parallel, with the usual algebro-geometric amendments, see e.g. \cite{G}.}

\begin{proposition}\label{PropFinG1}
The tensor category $\Rep(\mathbb{G}_1,\varepsilon)$ is finite and for the canonical action of ${G_+}$, we have $\C\simeq (\Rep(\mathbb{G}_1,\varepsilon))^{G_+}$.
\end{proposition}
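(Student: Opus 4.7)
The plan is to deduce both claims from the standard (de-)equivariantization correspondence applied to the short exact sequence of affine group schemes in $\Ver_p$
$$1\;\to\;\mathbb{G}_1\;\to\;\mathbb{G}\;\to\;G_+\;\to\;1$$
constructed just before the statement, where the outer terms are respectively a finite group scheme in $\Ver_p$ (through which $\varepsilon$ factors) and an ordinary affine group scheme of finite type over $\kk$. Since $\mathbb{G}_1$ is normal in $\mathbb{G}$, the conjugation action of $\mathbb{G}$ on $\mathbb{G}_1$ descends to an action of $G_+$, and since $G_+$ is an ordinary group scheme this yields the canonical $G_+$-action on $\Rep(\mathbb{G}_1,\varepsilon)$ referred to in the statement.

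The finiteness claim for $\Rep(\mathbb{G}_1,\varepsilon)$ is essentially immediate: it is the category of finite-dimensional comodules over the finite-dimensional Hopf algebra $\mathcal{O}(\mathbb{G}_1)$ in the finite tensor category $\Ver_p$, cut down by the compatibility with $\varepsilon:\pi_1(\Ver_p)\to\mathbb{G}_1$. This is well-known to yield a finite tensor category over $\kk$, the $\varepsilon$-compatibility merely selecting a Serre tensor subcategory of $\Rep(\mathbb{G}_1)$.

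For the equivalence $\C\simeq \Rep(\mathbb{G}_1,\varepsilon)^{G_+}$, I would invoke the de-equivariantization theorem: given a short exact sequence $1\to N\to H\to Q\to 1$ of affine group schemes, one has $\Rep(H)\simeq \Rep(N)^{Q}$, realised concretely by viewing $\mathcal{O}(N)=\mathcal{O}(H)\otimes_{\mathcal{O}(Q)}\kk$ as a commutative algebra object in $\Rep(H)$ and taking modules. Applied to our sequence in $\Ver_p$, with the $\varepsilon$-twist tracked throughout, this gives the asserted equivalence. The main (essentially cosmetic) obstacle is simply to verify that the classical arguments of \cite{EGNO}, stated there for actions of abstract finite groups on finite tensor categories, extend verbatim to the present mixed setting of an affine group scheme acting on a tensor category in $\Ver_p$ with compatible $\pi_1(\Ver_p)$-structure; this is the translation alluded to in the footnote and proceeds step-by-step by replacing the regular representation and induction/restriction functors by their algebro-geometric analogues in $\Ver_p$.
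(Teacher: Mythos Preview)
Your proposal is correct and is essentially the same argument the paper gives: the paper simply asserts that the proposition follows from ``standard properties of equivariantization and de-equivariantization'' in \cite{EGNO}, with the footnote remarking that the passage from abstract group actions to affine group scheme actions is a routine algebro-geometric translation. Your write-up is a faithful expansion of exactly this sketch, including the same acknowledgement about adapting \cite{EGNO} to the group-scheme setting.
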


\begin{remark} 
 This kind of presentation does not always exist in characteristic zero, 
e.g. for $\CC=\Rep SL(m|n)$. The reason it exists in characteristic $p$ is that 
the Frobenius homomorphism of $SL(m|n)$ lands in the even part $SL(m|n)_0=S(GL(m)\times GL(n))$, while in characteristic zero $SL(m|n)$ has no non-trivial homomorphisms to purely even groups when $m,n>0$. 
\end{remark}

\subsection{Classification of semisimple pre-Tannakian categories of moderate growth in positive characteristic} 
Using Nagata's classification of {\it linearly reductive} affine group schemes, i.e., those with semisimple representation categories (\cite{N}), we can give an  even more explicit description of {\it semisimple} pre-Tannakian categories of moderate growth in positive characteristic. 

Namely, let $\CC$ be such a category, and assume that $\CC$ is finitely generated (we may assume this without loss of generality since any such category is a union of finitely generated ones). By Proposition~\ref{PropFinG1}, 
$\CC=\mathcal D^{{G_+}}$ where $\mathcal D$ is a finite symmetric category 
with an action of a linearly reductive finite type group scheme ${G_+}$. 
Moreover, in this case it is easy to see that for $\C$ to be semisimple, $\mathcal D$ must be semisimple, i.e., a fusion category. Thus we get 

\begin{theorem} Let $\CC$ be a finitely generated semisimple pre-Tannakian category of moderate growth over $\kk$ of characteristic $p>0$. Then there exists a symmetric fusion category $\mathcal D$ and a linearly reductive affine group scheme ${G_+}$ of finite type acting on $\mathcal D$ such that $\CC=\mathcal D^{{G_+}}$. 
\end{theorem}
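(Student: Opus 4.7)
The plan is to combine Proposition~\ref{PropFinG1} with a Maschke-type averaging argument. For $p=2$, Corollary~\ref{peq2} identifies $\CC=\Rep G$ for an affine group scheme $G$ of finite type over $\kk$; semisimplicity of $\CC$ forces $G$ to be linearly reductive by Nagata \cite{N}, and one takes $\mathcal D=\Ve$ with trivial $G$-action. So I may assume $p>2$ and apply Proposition~\ref{PropFinG1} to write $\CC\simeq(\Rep(\mathbb G_1,\varepsilon))^{G_+}$, with $\mathcal D:=\Rep(\mathbb G_1,\varepsilon)$ a finite symmetric tensor category and $G_+$ an affine group scheme of finite type. It then remains to upgrade semisimplicity of $\CC$ to the two statements: (a) $G_+$ is linearly reductive, and (b) $\mathcal D$ is semisimple, hence a symmetric fusion category.

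For (a), I would use the trivial tensor embedding $\Ve\hookrightarrow\mathcal D$, $V\mapsto V\otimes\be$. Since the unit object is canonically $G_+$-fixed, this embedding is $G_+$-equivariant, and on equivariantizations produces a fully faithful tensor functor
$$\Rep G_+\;=\;\Ve^{G_+}\;\hookrightarrow\;\mathcal D^{G_+}\;=\;\CC.$$
Because $\be$ is simple in $\mathcal D$, every subobject in $\mathcal D$ of $\be^{\oplus n}$ is itself of the form $\be^{\oplus k}$, so the image of $\Rep G_+$ is closed under subquotients in $\CC$, i.e., it is a Serre subcategory. A Serre subcategory of a semisimple abelian category is semisimple, hence $\Rep G_+$ is semisimple and Nagata's theorem \cite{N} gives that $G_+$ is linearly reductive.

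For (b), identify $\mathcal D$ with the de-equivariantization $\CC_{G_+}$, that is, with the category of $\mathcal O(G_+)$-modules in $\CC$. Given a short exact sequence $0\to A\to B\xrightarrow{\pi}C\to 0$ of such modules, semisimplicity of $\CC$ yields a $\CC$-linear splitting $s\colon C\to B$. The classical Maschke-style averaging argument, available because $G_+$ is now known to be linearly reductive, applies the Reynolds operator on $\Hom_\CC(C,B)$ (which is a rational $G_+$-module via the $\mathcal O(G_+)$-module structures on $C$ and $B$) to produce an $\mathcal O(G_+)$-linear map $\bar s$; since $\id_C$ is already $G_+$-invariant, averaging the identity $\pi\circ s=\id_C$ yields $\pi\circ\bar s=\id_C$. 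Hence every short exact sequence in $\mathcal D$ splits, so $\mathcal D$ is a semisimple finite symmetric tensor category, i.e., a symmetric fusion category, and the presentation $\CC\simeq\mathcal D^{G_+}$ claimed in the statement is established.

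The main technical obstacle is carrying out step (b) rigorously for the possibly non-finite linearly reductive group scheme $G_+$: one must interpret $\Hom_\CC(C,B)$ as a rational $G_+$-module through the comodule structures descended from the $\mathcal O(G_+)$-module structures, and then invoke the Reynolds projection onto invariants. Both ingredients are standard for linearly reductive affine group schemes of finite type once (a) has been established, but they do rely on the concrete Hopf-algebraic structure of $\mathcal O(G_+)$ supplied in the paragraph preceding Proposition~\ref{PropFinG1}.
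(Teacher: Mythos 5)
Your overall architecture coincides with the paper's: the proof in the text consists of invoking Proposition~\ref{PropFinG1} to write $\CC\simeq(\Rep(\mathbb G_1,\varepsilon))^{G_+}$ and then asserting that semisimplicity of $\CC$ forces $G_+$ to be linearly reductive and $\cD$ to be semisimple (the latter is dismissed with ``it is easy to see''). Your step (a) is a correct and useful elaboration: $\Rep G_+=\Ve^{G_+}$ sits inside $\CC$ as a Serre subcategory (since subobjects of $\be^{\oplus n}$ in $\cD$ are again of that form, and an equivariant structure on $\be^{\oplus k}$ is exactly a $k$-dimensional representation of $G_+$), hence is semisimple, and Nagata applies. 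The $p=2$ reduction via Corollary~\ref{peq2} also matches the surrounding discussion in the paper.

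The mechanism you propose for step (b), however, has a genuine gap. Objects of the de-equivariantization $\CC_{G_+}$ are $\cO(G_+)$-\emph{modules} in $\mathrm{Ind}\,\CC$, not $G_+$-\emph{equivariant} objects: a module over the commutative Hopf algebra $\cO(G_+)$ is a quasi-coherent sheaf on $G_+$ and does not carry a ``descended'' rational $G_+$-action. Consequently $\Hom_{\mathrm{Ind}\CC}(C,B)$ is not naturally a rational $G_+$-module whose invariants are the $\cO(G_+)$-linear maps; the translation action of $G_+$ lives on the \emph{category} $\CC_{G_+}$ (by twisting module structures), not on individual Hom spaces. Your Reynolds averaging is genuinely available when $\cO(G_+)$ is separable, i.e.\ for the finite \'etale quotient $\Gamma$ of $G_+$ in Nagata's extension (Lemma~\ref{LemNag}), but fails literally for the diagonalizable part: $\cO(\mathbb G_m)=\kk[t,t^{-1}]$ is not separable and has non-projective finitely generated modules. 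Two ways to close the gap: (i) show that every object of $\cD=\CC_{G_+}$ is a direct summand of a free module $F(X)=X\otimes\cO(G_+)$ with $X\in\CC$ (this is where linear reductivity of $G_+$, established in your step (a), actually enters; compare the use of the induction functor $H^\vee$ in the proof of Corollary~\ref{corol}), and then observe that the adjunction $\Hom_{\cD}(F(X),M)\cong\Hom_{\mathrm{Ind}\CC}(X,M)$ together with semisimplicity of $\mathrm{Ind}\,\CC$ makes $F(X)$ projective, so every object of $\cD$ is projective; or (ii) use Nagata to de-equivariantize in two stages, handling the diagonalizable kernel by a grading argument and the prime-to-$p$ \'etale quotient by the separability idempotent. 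As written, your averaging step would not go through for a torus factor of $G_+$.
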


Thus we see that $\CC$ has a Tannakian subcategory $\Rep {G_+}$ over which it is a finite module category. Consequently, we have 

\begin{corollary}\label{corol} There exists a finite collection of simple objects 
$X_1,...,X_r\in \CC$ such that every simple object of $\CC$ 
is a direct summand in $V\otimes X_i$ for some $i$, where 
$V\in \Rep {G_+}$ is irreducible. 
\end{corollary}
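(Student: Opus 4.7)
The plan is to exploit the presentation $\CC=\mathcal{D}^{G_+}$ directly. Since $\mathcal{D}$ is a fusion category, its set of isomorphism classes of simple objects is finite, and the action of $G_+$ by tensor autoequivalences partitions it into finitely many orbits $\mathcal{O}_1,\dots,\mathcal{O}_r$. For each orbit $\mathcal{O}_k$, the semisimple object $\bigoplus_{D\in\mathcal{O}_k}D\in\mathcal{D}$ carries a canonical $G_+$-equivariant structure permuting the summands, hence descends to an object of $\CC$ whose underlying object in $\mathcal{D}$ is supported in $\mathcal{O}_k$. By semisimplicity of $\CC$ this object decomposes into simples whose underlying objects are likewise supported in $\mathcal{O}_k$, and I pick any one such summand to serve as $X_k$. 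This produces the desired finite collection $X_1,\dots,X_r$.

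The core step is to show that every simple $Y\in\CC$ is a direct summand of $V\otimes X_k$ for some $k$ and some irreducible $V\in\Rep G_+$. First, simplicity of $Y$ forces the simple constituents of $F(Y)$ (with $F:\CC\to\mathcal{D}$ the forgetful functor) to lie in a single orbit $\mathcal{O}_k$, since the decomposition of $F(Y)$ into orbit-blocks is $G_+$-stable and would otherwise descend to a non-trivial decomposition of $Y$ in $\CC$. With $k$ fixed accordingly, let $I:\mathcal{D}\to\CC$ denote the right adjoint of $F$ (induction). By the adjunction and rigidity of $X_k$,
$$\Hom_{\CC}\bigl(Y,\,I(\be)\otimes X_k\bigr)\;\simeq\;\Hom_{\mathcal{D}}\bigl(F(Y)\otimes F(X_k)^\ast,\,\be\bigr),$$
and the right-hand side is non-zero because $F(Y)$ and $F(X_k)$ share a simple summand in $\mathcal{O}_k$ and $\mathcal{D}$ is rigid. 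On the other hand, $I(\be)\simeq\mathcal{O}(G_+)$ sits in $\Rep G_+\subset\CC$ and, by Peter--Weyl (using linear reductivity of $G_+$), decomposes as the ind-object $\bigoplus_V V^{\oplus\dim V}$ over irreducible $V\in\Rep G_+$. Since $Y$ has finite length the non-vanishing above forces $\Hom_{\CC}(Y,V\otimes X_k)\neq 0$ for some irreducible $V$; simplicity of $Y$ and semisimplicity of $\CC$ then exhibit $Y$ as a direct summand of $V\otimes X_k$.

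The main obstacle is handling the right adjoint $I$ and its action on ind-objects, in particular the identification $I(\be)\simeq\mathcal{O}(G_+)$ together with its Peter--Weyl decomposition. Both facts are standard for a linearly reductive affine group scheme, but must be set up carefully in the enriched setting of an equivariantization of a tensor category; in particular one needs that $\Rep G_+\subset\CC$ is closed under the relevant tensor products and that the adjunction extends compatibly to ind-completions. An alternative route that avoids ind-objects is to invoke Clifford theory for the action of $G_+$ on $\mathcal{D}$, which parameterizes simple objects of $\CC$ by pairs consisting of an orbit in $\mathrm{Irr}(\mathcal{D})$ and an irreducible projective representation of the stabilizer in $G_+$, and from which the statement follows by an elementary verification after restricting irreducibles of $G_+$ to the stabilizer.
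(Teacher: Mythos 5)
Your argument is essentially the paper's: both rest on the de-equivariantization functor $F=H\colon\CC\to\cD$, its right adjoint $I=H^\vee$ (induction), the projection formula $I(\be)\otimes X\simeq I(F(X))$ with $I(\be)\simeq\mathcal O({G_+})$, and the Peter--Weyl decomposition of $\mathcal O({G_+})$ into irreducibles of the linearly reductive ${G_+}$; your adjunction identity $\Hom_{\CC}(Y,I(\be)\otimes X_k)\simeq\Hom_{\cD}(F(Y)\otimes F(X_k)^{\ast},\be)$ is exactly the paper's $\Hom(X,\mathcal O({G_+})\otimes X_i)=\Hom(H(X),H(X_i))$.

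The one step that does not hold as stated is your construction of the $X_k$: the orbit sum $\bigoplus_{D\in\mathcal O_k}D$ does not in general carry a ``canonical ${G_+}$-equivariant structure permuting the summands.'' The action of ${G_+}$ only permutes isomorphism classes, and the obstruction to choosing coherent isomorphisms is precisely the Clifford-theoretic cocycle you yourself invoke in your alternative route; for a group scheme (as opposed to an abstract group) the recipe is not even well posed. This is easily repaired, and the repair is what the paper does: $F$ is a surjective tensor functor, so every simple $D\in\cD$ occurs as a constituent of $F(X)$ for some simple $X\in\CC$; choosing one such $X$ for each of the finitely many simples of $\cD$ (orbits are not needed) already gives $\Hom_{\cD}(F(Y),F(X_i))\ne 0$ for any nonzero $Y$ and some $i$, and the rest of your argument goes through verbatim. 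Your observation that $F(Y)$ of a simple $Y$ is supported on a single orbit is correct but superfluous.
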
 

\begin{proof}
Let $H: \C\to\C_{{G_+}}$ be the natural surjective tensor functor.

Let $Y_1,...,Y_r$ be all the simples of $\cD=\C_{{G_+}}$ and $X_1,...,X_r$ be some simple objects of $\C$ such that $H(X_i)$ contains $Y_i$; they exist by surjectivity of $H$. So $\Hom(H(X_i),Y_i)\ne 0$. Let $H^\vee: \C_{{G_+}}\to {\rm Ind}\C$ be the right adjoint of $H$, i.e., the induction functor. Then $H^\vee(H(X_i))=H^\vee(\bold 1)\otimes X_i=\mathcal O({G_+})\otimes X_i$. Hence for an arbitrary $X\in \C$, we have 
$$\Hom(X,\mathcal O({G_+})\otimes X_i)=\Hom(X,H^\vee(H(X_i))=\Hom(H(X),H(X_i)),$$ which is nonzero for some $i$ if $X\ne 0$ as $H(X_i)$ contains $Y_i$. So for simple $X$ there is $i$ such that $X$ is contained in $\cO({G_+})\otimes X_i$. Hence there is a simple ${G_+}$-module $V$ such that $X$ is contained in $V\otimes X_i$, as claimed. 
\end{proof}

Moreover, Nagata's theorem (\cite{N}, see also Section \ref{SecBen} below) allows us to describe ${G_+}$ quite explicitly. Namely, such a group scheme can be included in a short exact sequence
$$
1\to A^\vee\to {G_+}\to \Gamma\to 1,
$$
where $\Gamma$ is a finite group of order prime to $p$, $A$  is a finitely generated abelian group without $p$-torsion, and $A^\vee$ is the dual group scheme of $A$
(i.e., $A^\vee=T\times A_{\rm tors}^\vee$, the product of a torus 
with the dual group scheme to a finite abelian $p$-group). 
It  follows that the dimensions of the irreducible representations of ${G_+}$ are uniformly bounded (by $|\Gamma|$). Thus we get 

\begin{corollary} Frobenius-Perron dimensions of simple objects in a finitely generated semisimple pre-Tannakian category of moderate growth in positive characteristic
are uniformly bounded (so they take finitely many values). 
\end{corollary}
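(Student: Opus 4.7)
The plan is to combine Corollary \ref{corol} with the explicit structure of $G_+$ given by Nagata's theorem to produce a uniform bound on $\gd$, and then use the discreteness of Frobenius--Perron dimensions from Theorem \ref{ThmFP}(1) to upgrade boundedness into finiteness of the value set.

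First I would apply Corollary \ref{corol}: every simple $X\in\C$ is a direct summand of $V\otimes X_i$ for some irreducible $V\in\Rep G_+$ and some $X_i$ from the fixed finite list $X_1,\dots,X_r$. Since $\gd$ is a ring homomorphism (Theorem \ref{ThmFP}(1)) and on the Tannakian subcategory $\Rep G_+\subset\C$ coincides with the ordinary vector-space dimension (by Theorem \ref{ThmFP}(2) applied inside any finite tensor subcategory of $\Rep G_+$ containing $V$), we obtain
$$\gd(X)\;\le\;\gd(V\otimes X_i)\;=\;(\dim V)\cdot \gd(X_i)\;\le\;(\dim V)\cdot M,$$
where $M:=\max_i\gd(X_i)<\infty$. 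The task thereby reduces to a uniform bound on $\dim V$ over all irreducible $V\in\Rep G_+$.

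Second, I would exploit the Nagata exact sequence $1\to A^\vee\to G_+\to\Gamma\to 1$ with $A^\vee$ diagonalizable. Since every $A^\vee$-representation decomposes into characters, at least one character $\chi$ of $A^\vee$ appears in $V|_{A^\vee}$. By Frobenius reciprocity $V$ embeds in $\mathrm{Ind}_{A^\vee}^{G_+}(\chi)$, which as a $\kk$-vector space has dimension equal to the index $[G_+:A^\vee]=|\Gamma|$. Therefore $\dim V\le|\Gamma|$, uniformly in $V$, and so $\gd(X)\le M\cdot|\Gamma|$ for every simple $X\in\C$.

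Finally, Theorem \ref{ThmFP}(1) locates every value $\gd(X)$ in the set of non-negative integer combinations of the basis $\{[m]_q:1\le m\le(p-1)/2\}$ of $\cO_p$ (or of non-negative integers when $p\le 3$); since each $[m]_q\ge 1$, only finitely many such combinations can lie below $M|\Gamma|$, giving the claimed finiteness. The least routine ingredient is the Frobenius reciprocity / induction step for affine group schemes, but this is standard given the concrete structure of $G_+$ and the diagonalizability of $A^\vee$; everything else is assembly of results already established in the paper.
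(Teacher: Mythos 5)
Your proposal is correct and follows essentially the same route as the paper, which likewise deduces the bound from Corollary~\ref{corol} together with the observation (via the Nagata sequence) that irreducible $G_+$-representations have dimension at most $|\Gamma|$, and gets finiteness of the value set from Theorem~\ref{ThmFP}(1). One small repair: to see $\gd(V)=\dim_\kk V$ for irreducible $V\in\Rep G_+$ you should not invoke Theorem~\ref{ThmFP}(2) inside a ``finite tensor subcategory'' (none need exist when $G_+$ contains a torus); instead use Theorem~\ref{ThmFP}(3) or (4), since the fiber functor restricted to the Tannakian subcategory $\Rep G_+$ lands in $\Ve\subset\Ver_p$ where $\FPdim$ is the ordinary dimension.
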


\begin{remark} Note that this is false in characteristic zero, e.g. for 
$\CC=\Rep SL_2(\Bbb C)$. 
\end{remark} 

\subsection{Growth rates in modular representation theory}

Now we will discuss some applications of our results to classical modular representation theory. First we will consider the problem of describing non-negligible indecomposable direct summands in $V^{\otimes n}\otimes V^{*\otimes m}$ (i.e., ones of dimension prime to $p$), where $V$ is a finite dimensional representation of a finite group (or, more generally, affine group scheme) $\bold G$  over a field $\kk$ of characteristic $p>0$. This type of questions is discussed in \cite{B,B2,BS}.

\subsubsection{}
Let $\bold G$ be an affine group scheme over $\kk$ of characteristic $p>0$. Let $V\in \Rep\bold G$. Let $\delta_n(V)$ be the number of indecomposable non-negligible direct summands in $V^{\otimes n}$ (counted with multiplicities). We have $\delta_n(V)\le (\dim_\kk V)^n$. Thus 
we can define the invariant 
$$
\delta(V):=\limsup_{n\to \infty}\delta_n(V)^{1/n}\in \Bbb R. 
$$ 

\begin{theorem}\label{finti} 
\begin{enumerate}
\item $\delta(V)=\lim_{n\to \infty}\delta_n(V)^{1/n}=\sup_{n\ge 1}\delta_n(V)^{1/n}$. Moreover, there exist unique non-negative integers $m_j$, $j=1,...,p-1$ such that
$$
\delta(V)=\sum_{k=1}^{p-1}[k]_qm_k
$$
and for $p>2$
$$
\delta(S^2V)-\delta(\wedge^2V)=\sum_{k=1}^{p-1}[k]_{q^2}m_k.
$$

\item For $V,W\in \Rep\bold G$,
$$\delta(V\oplus W)=\delta(V)+\delta(W)\;\mbox{ and }\;\delta(V\otimes W)=\delta(V)\delta(W).$$

\item $\dim_\kk V-\sum_{k=1}^{p-1}km_k$ is divisible by $p$. 

\item We have $\delta(V)\ge |[\dim_{\kk}V]_q|.$

\end{enumerate}
\end{theorem}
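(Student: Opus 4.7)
The plan is to reduce the problem to the Verlinde category via semisimplification and the main theorem. Let $\overline{\Rep\bold G}$ denote the semisimplification of $\Rep\bold G$ (its quotient by the tensor ideal of negligible morphisms) and $\overline V$ the image of $V$; by construction, the non-negligible indecomposable summands of $V^{\otimes n}$ correspond bijectively to the simple constituents of $\overline V^{\otimes n}$, so $\delta_n(V)=\ell(\overline V^{\otimes n})$. The tensor subcategory $\C_V\subset\overline{\Rep\bold G}$ generated by $\overline V$ is a semisimple pre-Tannakian category, hence automatically Frobenius exact, and it is of moderate growth since $\delta_n(V)\le(\dim_\kk V)^n$ (with moderate growth propagating from $\overline V\oplus\overline V^*$ to all objects in $\C_V$, all of which are direct summands of its tensor powers). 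By Theorem~\ref{MainThmText} it admits a unique fiber functor $F\colon\C_V\to\Ver_p$, and we let $m_k$ be the multiplicity of $L_k$ in the Krull--Schmidt decomposition $F(\overline V)\cong\bigoplus_{k=1}^{p-1}L_k^{\oplus m_k}$.

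Then Theorem~\ref{ThmFP}(3) yields $\delta(V)=\gd(\overline V)=\FPdim(F(\overline V))=\sum_k m_k[k]_q$, which is the first formula in (1). The equalities $\delta(V)=\lim=\sup$ follow from Fekete's lemma applied to the supermultiplicative sequence $\delta_n$, where supermultiplicativity $\delta_{n+m}\ge\delta_n\delta_m$ comes from the inequality $\ell(A\otimes B)\ge\ell(A)\ell(B)$ in the semisimple rigid category $\C_V$ (each simple summand of $A$ tensored with each simple summand of $B$ contributes at least one constituent). Part (2) is immediate from the preservation of $\oplus$ and $\otimes$ by both semisimplification and $F$ together with additivity and multiplicativity of $\FPdim$. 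Part (3) follows because both functors preserve categorical dimension, giving $\dim_\kk V\equiv\dim F(\overline V)=\sum_k km_k\pmod p$, using $\dim L_k=k\in\mathbb F_p$. Part (4) is then the inequality $\FPdim(F(\overline V))\ge[\dim F(\overline V)]_q$ from Theorem~\ref{ThmFP}(1).

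For the remaining identity in (1), valid for $p>2$: since $X^{\otimes 2}=S^2X\oplus\wedge^2X$ in any Frobenius exact symmetric tensor category with $p>2$, and both semisimplification and $F$ respect this decomposition, we reduce to proving $\FPdim(S^2L_k)-\FPdim(\wedge^2L_k)=[k]_{q^2}$ in $\Ver_p$ for each $k$. I plan to establish this (the main obstacle of the proof) by identifying $\Ver_p$ with the semisimplification of the tilting category of $U_q(\mathfrak{sl}_2)$ at $q=e^{i\pi/p}$ and applying the classical character identity $\chi_{S^2V}-\chi_{\wedge^2V}=\chi_V\circ\mathrm{sq}$ to the simple $V_k$, whose quantum character is $[k]_t$, then specializing $t=q$ (since $\FPdim$ on $\Ver_p$ is precisely this specialization). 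Finally, uniqueness of the $m_k$ among non-negative integer tuples satisfying both formulas follows from the $\BZ$-linear independence of $\{[k]_q\}_{1\le k\le(p-1)/2}$ in $\cO_p$ and of $\{[k]_{q^2}\}_{1\le k\le(p-1)/2}$ in $\BZ[2\cos(2\pi/p)]$, combined with the relations $[p-k]_q=[k]_q$ and $[p-k]_{q^2}=-[k]_{q^2}$, which jointly recover the pairs $m_k+m_{p-k}$ and $m_k-m_{p-k}$ and hence each $m_k$ individually.
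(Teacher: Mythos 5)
Your proposal is correct and follows essentially the same route as the paper: semisimplify, pass to the tensor subcategory generated by $\overline V$, apply Theorem~\ref{MainThmText} to get the fiber functor to $\Ver_p$, compute via $\FPdim$, and recover the $m_k$ from the two displayed identities using $[p-k]_q=[k]_q$ and $[p-k]_{q^2}=-[k]_{q^2}$ (the paper phrases this last step via the Galois automorphism $q^2\mapsto -q$, which is equivalent to your linear-independence argument). The only divergence is the identity $\FPdim(S^2L_k)-\FPdim(\wedge^2L_k)=[k]_{q^2}$, which the paper simply cites from [EOV, Proposition 4.5]; your sketch of it is sound in spirit, but should be run through the \emph{symmetric} realization of $\Ver_p$ as the semisimplified tilting category of $SL_2$ in characteristic $p$ (where negligible tiltings have vanishing character value at $q$ and the Adams-operation identity applies verbatim), rather than through $U_q(\mathfrak{sl}_2)$, whose equivalence with $\Ver_p$ is monoidal but not braided, so the $S^2/\wedge^2$ decompositions need not match there.
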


Note that the bound in (4) is sharp (achieved for $\bold G=\Bbb Z/p$). 

\begin{example} For $p=2,3$ we get that $\delta(V)$ is an integer. For $p=5$ we have $\delta(V)=n_1+\frac{1+\sqrt{5}}{2}n_2$ for non-negative integers $n_1,n_2$.
\end{example}

\begin{proof} (1) It is clear 
that $\delta_n(V)=d_n(\overline V)$, where $\overline V$ is the image of $V$ in the semisimplification of $\Rep\bold G$. Thus 
$$
\delta(V)=\lim_{n\to \infty}\delta_n(V)^{1/n}=\sup_{n\ge 1}\delta_n(V)^{1/n}=\gd(\overline V).
$$
Let $\mathcal C=\langle \overline{V}\rangle$ and 
$F: \mathcal C\to {\rm Ver}_p$ be the fiber functor, which exists by Theorem \ref{MainThmText}. Then $F(\overline{V})=\oplus_{k=1}^{p-1}m_kL_k$ and ${\rm FPdim}(L_k)=[k]_q$, so
$$
{\rm FPdim}(\overline{V})=\sum_{k=1}^{p-1}[k]_q m_k
$$
and for $p>2$
$$
{\rm FPdim}(S^2\overline{V})-{\rm FPdim}(\wedge^2\overline V)=
\sum_{k=1}^{p-1}[k]_{q^2}m_k,
$$
see \cite{EOV}, Proposition 4.5.

Let $p>2$. Since $[k]_q=[p-k]_q$, and $[k]_q$ form a $\Bbb Q$-basis of
$\Bbb Q(q+q^{-1})$ when $1\le k\le \frac{p-1}{2}$,
we see that $\delta(V)$ determines $m_k+m_{p-k}$.
On the other hand, applying the Galois automorphism $g$
such that $g(q^2)=-q$, we get
$$
g({\rm FPdim}(S^2\overline{V})-{\rm FPdim}(\wedge^2\overline V))=
\sum_{k=1}^{p-1}(-1)^{k-1}[k]_{q}m_k,
$$
which determines $m_k-m_{p-k}$. Thus $m_k$ are uniquely determined, as claimed.

(2) This follows immediately from the above and Theorem~\ref{ThmFP}(1).

(3) The image of $\dim_\kk V$ in $\Bbb F_p$
is $\dim \overline V=\sum_{k=1}^{p-1}km_k\in \Bbb F_p$.
Thus the number $\dim_\kk V-\sum_{k=1}^{p-1}km_k$ is divisible by $p$.

(4) This follows from Theorem~\ref{ThmFP}(1), using $|[\dim_k V]_{q}|=[\dim V]_q$, for the categorical dimension $\dim V=\dim \overline{V}$.
\end{proof}

Also, Corollary \ref{corol} and Nagata's theorem imply

\begin{corollary} Let $V$ be an indecomposable representation of $\bold G$ over $\kk$ of dimension prime to $p$, and let $\Sigma(V)$ be the collection of all indecomposables of dimension prime to $p$ that occur as direct summands in $V^{\otimes n}\otimes V^{*\otimes m}$ for various $n,m$. 
Then there exists a constant $K_V$ such that for any $W_1,...,W_r\in \Sigma(V)$, the tensor product $W_1\otimes...\otimes W_r$ contains $\le K_V^r$ indecomposable direct summands of dimension coprime to $p$. In particular, for every $W\in \Sigma(V)$ one has $\delta(W)\le K_V$. 
\end{corollary}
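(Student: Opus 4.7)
The plan is to pass to the semisimplification and reduce to a statement about Frobenius--Perron dimensions in a finitely generated semisimple pre-Tannakian category of moderate growth, so that the uniform bound on FPdim's of simples established in the previous corollary can be applied directly.

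First, I would observe that if $\overline{\Rep\bold G}$ denotes the semisimplification, then the negligible indecomposables of $\Rep\bold G$ are exactly those of dimension divisible by $p$, so non-negligible indecomposable direct summands of any $W\in\Rep\bold G$ correspond bijectively (with multiplicities) to simple constituents of $\overline{W}$ in $\overline{\Rep\bold G}$. In particular, the number of indecomposable summands of $W_1\otimes\cdots\otimes W_r$ of dimension coprime to $p$ equals the length $\ell(\overline{W_1}\otimes\cdots\otimes\overline{W_r})$ computed in $\overline{\Rep\bold G}$. Let $\C\subset\overline{\Rep\bold G}$ be the symmetric tensor subcategory generated by $\overline{V}$ (equivalently, generated by $\overline{V}$ and $\overline{V}^*$, since $\C$ is rigid). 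Then $\C$ is a finitely generated semisimple pre-Tannakian category of moderate growth, and each $\overline{W}$ for $W\in\Sigma(V)$ is a simple object of $\C$ (being an indecomposable summand of some tensor power of $\overline{V}$ and $\overline{V}^*$).

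Next, I would apply the corollary immediately preceding our statement, which asserts that Frobenius--Perron dimensions of simple objects in any such $\C$ are uniformly bounded. Let $K_V$ be the supremum of $\FPdim(S)$ over simples $S$ of $\C$; this is a finite real number and bounds $\FPdim(\overline{W})$ for every $W\in\Sigma(V)$.

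Finally, since $\C$ is semisimple and every simple object has $\FPdim\ge 1$, for any object $X\in\C$ we have $\ell(X)\le \FPdim(X)$. Thus for $W_1,\ldots,W_r\in\Sigma(V)$,
\[
\ell\bigl(\overline{W_1}\otimes\cdots\otimes\overline{W_r}\bigr)
\;\le\;\FPdim\bigl(\overline{W_1}\otimes\cdots\otimes\overline{W_r}\bigr)
\;=\;\prod_{i=1}^r\FPdim(\overline{W_i})\;\le\;K_V^r,
\]
which is the required bound. The "in particular" clause is then immediate: for $W\in\Sigma(V)$, by Theorem~\ref{ThmFP}(3) and the discussion in the proof of Theorem~\ref{finti}, $\delta(W)=\gd(\overline{W})=\FPdim(\overline{W})\le K_V$.

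There is no serious obstacle here: all the hard work is packaged into the previously established uniform boundedness of Frobenius--Perron dimensions of simples (which in turn rests on the structure theorem $\C\simeq\cD^{G_+}$ combined with Nagata's theorem bounding irreducible dimensions of linearly reductive group schemes by $|\Gamma|$). The only care needed is the translation between non-negligible summands in $\Rep\bold G$ and simple constituents in $\C$, and the elementary inequality $\ell\le\FPdim$ on a semisimple category with simples of $\FPdim\ge 1$.
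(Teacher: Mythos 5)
Your proposal is correct and follows essentially the same route as the paper, which simply derives this corollary from the uniform boundedness of Frobenius--Perron dimensions of simple objects in the finitely generated semisimple category $\langle\overline V\rangle$ (itself a consequence of Corollary~\ref{corol} and Nagata's theorem), together with the standard translation between non-negligible indecomposable summands and simple constituents in the semisimplification and the inequality $\ell\le\FPdim$. The only detail worth noting is that the paper leaves all of this implicit in a one-line attribution, whereas you have spelled it out; your steps are all valid.
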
 

\begin{remark}
The growth rate $\delta(V)$ is an analog of $\gamma_{\mathfrak{X}}(V)$ introduced in \cite[\S 1.4]{B} ($\mathfrak{X}$ is the ideal of negligible objects in $\Rep \mathbf{G}$), which is similarly defined in terms of the {\em dimension} of the non-negligible part of $V^{\otimes n}$. In particular, $\delta(V)\le \gamma_{\mathfrak{X}}(V)$. To the best of our knowledge, it is an open question whether $\gamma_{\mathfrak{X}}$ takes values in algebraic numbers (as $\delta$ does by \ref{finti}(1)), whether the properties in \ref{finti}(2) hold for $\gamma_{\mathfrak{X}}$, or whether in fact $\delta=\gamma_{\mathfrak{X}}$.
\end{remark}

\subsection{Benson's conjecture}\label{SecBen}

\subsubsection{} We start by formulating the infinite type version of Nagata's theorem. Recall that for any abelian group, the dual group scheme $A^\vee$ from~\cite[IV.\S1]{DG} can be defined as $\mathrm{Spec}( \kk A)$. A profinite group $\varprojlim \Gamma_i$ can be interpreted as an affine group scheme by taking the inverse limit in the corresponding category, or explicitly as $\mathrm{Spec}\varinjlim (\kk\Gamma_i)^\ast$.
\begin{lemma}\label{LemNag}
Let $G$ be an affine group scheme over $\kk$ for which $\Rep G$ is semisimple. Then there exists a unique (up to isomorphism) short exact sequence
$$1\to A^\vee\to G\to \Gamma\to 1,$$
where $A$ is an abelian group in which all torsion elements have order a power of $p$ and $\Gamma$ is a projective limit of finite groups with order prime to $p$.
\end{lemma}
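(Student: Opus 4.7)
The plan is to bootstrap Nagata's classical theorem from the finite-type setting to pro-finite-type. First I would write $G = \varprojlim_\alpha G_\alpha$ over the directed system of finite-type quotients of $G$; equivalently, $\mathcal{O}(G)$ is the filtered union of its finitely generated sub-Hopf-algebras. Since $G \twoheadrightarrow G_\alpha$ is surjective, every representation of $G_\alpha$ pulls back to one of $G$, so $\Rep G_\alpha$ embeds into $\Rep G$ as a semisimple tensor subcategory, making each $G_\alpha$ linearly reductive of finite type. Nagata's theorem then supplies a canonical short exact sequence
\begin{equation*}
1 \to A_\alpha^\vee \to G_\alpha \to \Gamma_\alpha \to 1,
\end{equation*}
with $A_\alpha$ a finitely generated abelian group whose torsion is $p$-primary and $\Gamma_\alpha$ finite of order prime to $p$.

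The central step is to pass to the limit, and here I would rely on an intrinsic characterization of the decomposition. Since $A_\alpha$ has only $p$-primary torsion, the diagonalizable group $A_\alpha^\vee$ is connected and equals the identity component $G_\alpha^\circ$, while $\Gamma_\alpha = \pi_0(G_\alpha)$ is étale of prime-to-$p$ order. Both constructions are functorial for surjections of finite-type affine group schemes, so the decompositions assemble into a morphism of inverse systems. Setting $A := \varinjlim A_\alpha$ preserves $p$-primary torsion (it is inherited by directed unions), and $\Gamma := \varprojlim \Gamma_\alpha$ is a projective limit of finite groups of order prime to $p$. One then identifies $A^\vee = \mathrm{Spec}(\kk A) = \varprojlim A_\alpha^\vee$ as an affine group scheme of multiplicative type, embedded in $G$ via the compatible closed immersions $A_\alpha^\vee \hookrightarrow G_\alpha$.

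Finally I would verify exactness of $1 \to A^\vee \to G \to \Gamma \to 1$: injectivity at $A^\vee$ and the kernel description at $G$ transfer from each finite level, while surjectivity at $\Gamma$ follows from Mittag--Leffler applied to the inverse system $\{\Gamma_\alpha\}$ of finite groups with surjective transition maps. Uniqueness is then immediate from the intrinsic characterization: any alternative $1 \to B^\vee \to G \to \Lambda \to 1$ of the stated form forces $B^\vee$ to be connected of multiplicative type with étale prime-to-$p$ quotient at every finite level, so $B^\vee = G^\circ = A^\vee$, and consequently $B \cong A$, $\Lambda \cong \Gamma$. The main obstacle I anticipate is ensuring exactness in the inverse limit---particularly surjectivity at $\Gamma$---but this is routine given the finiteness of each $\Gamma_\alpha$ and the Mittag--Leffler condition automatically satisfied by inverse systems of finite sets with surjective transition maps.
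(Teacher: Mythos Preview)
Your proposal is correct and follows essentially the same route as the paper: write $G$ as an inverse limit of finite-type linearly reductive quotients, apply Nagata's theorem at each level to identify $A_\alpha^\vee$ with the connected component $G_\alpha^\circ$, and pass to the limit. The paper compresses the alignment and uniqueness steps into the single observation that all homomorphisms $A^\vee\to\Gamma$ vanish (equivalent to your connected-versus-\'etale argument), but the substance is identical.
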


\begin{proof}
Uniqueness follows from the observation that for such groups, all homomorphisms $A^\vee\to\Gamma$ are trivial. From the Tannakian formalism it follows that $G$ must be an inverse limit of affine group schemes $H$ of finite type with semisimple representation theory. As $\kk$ is algebraically closed and by \cite[IV.3.1.1.2]{DG}, Nagata's theorem \cite[IV.3.3.6]{DG} implies that the short exact sequence given by the inclusion of the connected component $H^0$ in $H$ is a short exact sequence of the desired type. By the above vanishing of homomorphisms, the short exact sequences align well to take the inverse limit.
\end{proof}

\subsubsection{}\label{DefFun}
Consider now the case of characteristic $p=2$. In this case we know by Corollary~\ref{peq2} that for a finite group $\bold G$ the semisimplification $\overline{\Rep \bold G}$ is Tannakian, i.e., $\overline{\Rep \bold G}=\Rep G$ for an affine group scheme $G$ as in Lemma~\ref{LemNag}. In particular, we have an assignment 
$${\bold G}\;\mapsto\;(A=A_{\bold G},\Gamma=\Gamma_{\bold G}),$$
which associates to any finite group $\bold G$ an abelian group $A$ without odd torsion and a profinite group $\Gamma$ which is the inverse limit of finite groups of odd order. Note that we then actually have
$$\overline{\Rep\bold G}\;\simeq\; (\Ve_A)^{\Gamma}$$
for an action of $\Gamma$ on the category of $A$-graded vector spaces.

This is closely related to the following conjecture of D. Benson (\cite{B2}, Conjecture 1.1), supported by ample computer evidence and proofs in special cases.

\begin{conjecture}\label{beco}
  Let $\bold G$ be a finite $2$-group and $V$ an odd dimensional indecomposable representation of $\bold G$ over $\kk$ of characteristic $2$ and let us decompose $V \otimes V^{*}$ as the direct sum $\kk \oplus Q$. Then, $Q$ is a direct sum of even-dimensional indecomposable representations.\footnote{In fact, it is conjectured in \cite{B2} that the dimensions of the non-trivial summands in $V\otimes V^*$ are moreover divisible by $4$.}
\end{conjecture}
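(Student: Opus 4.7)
The plan is to reduce Conjecture~\ref{beco} to a computation in the semisimplification $\overline{\Rep\bold G}$ and then invoke Corollary~\ref{peq2}. First recall that an indecomposable $M\in\Rep\bold G$ is negligible in $\Rep\bold G$ precisely when its categorical dimension $\dim_\kk M\in\kk$ vanishes, which in characteristic $2$ amounts to $\dim_\kk M$ being even. Hence the conjecture is equivalent to the assertion that in the decomposition $V\otimes V^\ast=\kk\oplus Q$, the summand $Q$ becomes zero in $\overline{\Rep\bold G}$, i.e.\ $\overline{V\otimes V^\ast}\simeq\overline\kk$.

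I would then apply Corollary~\ref{peq2} to the semisimple pre-Tannakian category $\overline{\Rep\bold G}$, which is automatically Frobenius exact (being semisimple) and of moderate growth (being a fusion category). This yields $\overline{\Rep\bold G}\simeq\Rep G$ for an affine group scheme $G$ over $\kk$ of the form in Lemma~\ref{LemNag}. The key step is to argue that $G$ is diagonalizable. Any non-negligible indecomposable $M\in\Rep\bold G$ has $\dim_\kk M$ odd, so its image $\overline M$ is a simple object of $\overline{\Rep\bold G}\simeq\Rep G$ with categorical dimension $\dim_\kk M\bmod 2 = 1\in\kk$; but in $\Rep G$ the categorical dimension of a simple equals its $\kk$-dimension, so $\overline M$ is $1$-dimensional, i.e.\ a character of $G$. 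Thus every simple of $\overline{\Rep\bold G}$ is invertible, the category is pointed, and $\overline{\Rep\bold G}\simeq\Ve_A$ for some abelian group $A$ (with only $2$-torsion, by Lemma~\ref{LemNag}).

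To conclude, $\overline V\in\Ve_A$ is one-dimensional and concentrated in some degree $a\in A$, so $\overline V\otimes\overline V^\ast$ is one-dimensional and of degree $0$, hence $\overline V\otimes\overline V^\ast\simeq\overline\kk$. Combined with the first paragraph this forces $\overline Q=0$, so every indecomposable summand of $Q$ is negligible, hence has even $\kk$-dimension, which is the desired statement. The conceptual obstacle is already absorbed into Corollary~\ref{peq2}; once that is available, the remaining content is the elementary observation that odd-dimensional indecomposables of a $2$-group in characteristic $2$ all descend to $1$-dimensional characters in the semisimplification, forcing the Tannakian envelope to be diagonalizable.
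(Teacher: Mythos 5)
This statement is an open conjecture of Benson; the paper does not prove it, only reformulates it (as the assertion that $\overline{\Rep\bold G}$ is pointed) and relates it to Conjectures~\ref{gammatriv} and~\ref{gammatriv1}. Your argument therefore must contain a gap, and it does, at the step you yourself flag as the key one. You argue that a non-negligible indecomposable $M$ has odd $\dim_\kk M$, hence its image $\overline M$ in $\overline{\Rep\bold G}\simeq\Rep G$ has categorical dimension $1\in\kk$, and then you conclude that $\overline M$ is $1$-dimensional because ``in $\Rep G$ the categorical dimension of a simple equals its $\kk$-dimension.'' That last claim is false: the categorical dimension of an object of $\Rep G$ is an element of $\End(\be)=\kk$, namely the image of the integer $\dim_\kk$ under $\BZ\to\kk$, i.e.\ the dimension \emph{reduced mod $2$}. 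So categorical dimension $1\in\kk$ only recovers the information you started with (that $\dim_\kk\overline M$ is odd); it does not force $\overline M$ to be invertible. For example, if the group scheme $G$ produced by Corollary~\ref{peq2} had a $3$-dimensional irreducible representation, it would have categorical dimension $1\in\kk$ and your argument could not detect it; by Lemma~\ref{LemNag} such irreducibles can a priori arise from the odd-order profinite quotient $\Gamma_{\bold G}$, which can be nonabelian.

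Ruling out exactly this possibility is the entire content of the conjecture: as the paper explains, Conjecture~\ref{beco} is equivalent to $\overline{\Rep\bold G}$ being pointed, and the stronger Conjecture~\ref{gammatriv} asserts $\Gamma_{\bold G}=1$; the authors state they cannot prove even the weak consequence that $V^{\otimes 3}$ never contains a trivial direct summand for non-trivial indecomposable $V$. Everything up to and including the application of Corollary~\ref{peq2} in your write-up is correct and is precisely the paper's reduction (a minor quibble: $\overline{\Rep\bold G}$ is of moderate growth because $\ell(\overline V^{\otimes n})\le(\dim_\kk V)^n$, not because it is a fusion category --- it generally has infinitely many simples). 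But the passage from ``Tannakian'' to ``diagonalizable'' is exactly the open problem, not an elementary observation.
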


In the language of tensor categories, Conjecture \ref{beco} says that the semisimplification $\overline{{\rm Rep}\bold G}$ is a {\em pointed category}, i.e., the pre-Tannakian category ${\rm Vec}_{A'}$ of vector spaces graded by an abelian group $A'$, and in particular for any indecomposable odd-dimensional $V\in \Rep \bold G$, the subcategory
$\langle \overline V\rangle$ is ${\rm Vec}_C$ for a cyclic group $C$.

Moreover, computer evidence collected by D. Benson (private communication) suggests that $C=\Bbb Z$ or has order a power of $2$. Such strengthened version of the conjecture would be equivalent to

\begin{conjecture}\label{gammatriv} If $\bold G$ is a finite 2-group,
then $\Gamma_{\bold G}=1$. In other words,
 $\overline{\Rep\bold G}={\rm Vec}_A$ for some (in general, infinitely generated) abelian group $A$ without odd torsion.
\end{conjecture}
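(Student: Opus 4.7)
The plan is to use Corollary~\ref{peq2} and Nagata's theorem (Lemma~\ref{LemNag}) to reduce Conjecture~\ref{gammatriv} to Benson's Conjecture~\ref{beco} (in a slightly strengthened form), and then to attempt to prove the latter directly. As a first step, Corollary~\ref{peq2} (the $p=2$ case of the main theorem) identifies $\overline{\Rep \mathbf G}$ with $\Rep G$ for some affine group scheme $G$ over $\kk$, and Lemma~\ref{LemNag} produces the canonical short exact sequence
$$1 \to A_{\mathbf G}^\vee \to G \to \Gamma_{\mathbf G} \to 1,$$
where $A_{\mathbf G}$ has only $2$-torsion (plus possibly a free part) and $\Gamma_{\mathbf G}$ is a profinite group of pro-odd order. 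The goal is to show $\Gamma_{\mathbf G} = 1$.

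The second step is a categorical reformulation: $\Gamma_{\mathbf G} = 1$ holds if and only if $\overline{\Rep \mathbf G}$ is pointed \emph{and} its group of invertible objects has no odd torsion. Indeed, pointedness gives $\Rep G \simeq \Ve_B$ so that $G = B^\vee$, and the uniqueness clause of Lemma~\ref{LemNag} then identifies $\Gamma_{\mathbf G}$ with the Cartier dual of the odd-torsion part of $B$. Equivalently, Conjecture~\ref{gammatriv} amounts to the following two assertions for every indecomposable $V \in \Rep \mathbf G$ of odd dimension: (i) $V \otimes V^*$ has $\kk$ as its unique odd-dimensional indecomposable direct summand, and (ii) there is no odd integer $m > 1$ with $V^{\otimes m} \cong \kk \oplus Q$ and $Q$ a sum of even-dimensional indecomposables. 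Assertion (i) is Benson's original Conjecture~\ref{beco}; together with (ii) it is the strengthening alluded to in the discussion preceding the conjecture.

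Third, to attack (i), I would decompose
$$V \otimes V^* \;\simeq\; \kk \,\oplus\, \mathfrak{sl}(V)$$
as $\mathbf G$-modules. The splitting is provided by the trace map paired with the identity section $\kk \to V \otimes V^*$, whose composition is multiplication by $\dim V$, a nonzero element of $\kk$ in characteristic $2$ since $\dim V$ is odd. The task reduces to showing that every indecomposable summand of the conjugation module $\mathfrak{sl}(V)$ has even $\kk$-dimension. A natural strategy is induction on $|\mathbf G|$ via Clifford theory relative to a central subgroup of order $2$, combined with support-variety or rank-variety methods in the spirit of Carlson and Benson, exploiting that in characteristic $2$ every simple $\mathbf G$-module is the trivial module. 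Assertion (ii) should then follow by a similar but more delicate iteration, using the multiplicativity of $\gd$ from Theorem~\ref{ThmFP}(1) to constrain the multiplicative order of a hypothetical odd-order invertible object in $\overline{\Rep \mathbf G}$.

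The main obstacle is the content of (i), which is precisely Benson's conjecture and remains open despite substantial computer evidence and proofs in special cases \cite{B,B2,BS}. Generic finite $2$-groups have wild representation type, so a uniform parity argument not relying on a classification of indecomposables is essential; a genuinely new input, such as a mod-$2$ cohomological obstruction to odd-order gradings on $\overline{\Rep \mathbf G}$, or a parity invariant behaving multiplicatively under tensor products, appears to be required. The present paper reduces Conjecture~\ref{gammatriv} to Benson's conjecture in a clean categorical form, but does not supply the additional input needed to close the gap.
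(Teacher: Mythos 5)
The statement you have been asked to prove is Conjecture~\ref{gammatriv}: it appears in the paper as an \emph{open conjecture}, not a theorem, and the paper supplies no proof of it --- the authors even remark that the $m=3$ case of an equivalent reformulation (that $V^{\otimes 3}$ never contains $\kk$ as a direct summand for a non-trivial indecomposable $V$) is unknown. Your first two steps --- identifying $\overline{\Rep\mathbf{G}}$ with $\Rep G$ via Corollary~\ref{peq2}, invoking Lemma~\ref{LemNag}, and reformulating $\Gamma_{\mathbf{G}}=1$ as pointedness of $\overline{\Rep\mathbf{G}}$ together with absence of odd torsion in its group of invertible objects --- are correct and coincide with the reductions the paper itself carries out in the surrounding discussion (where the odd-torsion part is handled via Feit--Thompson). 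But these reductions merely restate the conjecture; they do not prove it.

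The genuine gap is in your third step. The splitting $V\otimes V^*\simeq \kk\oplus\ker(\mathrm{tr})$ is valid because $\dim V$ is odd, but the claim that every indecomposable summand of $\ker(\mathrm{tr})$ is even-dimensional is exactly Benson's Conjecture~\ref{beco}, which is open; ``induction via Clifford theory'' and ``support-variety methods'' are named as strategies rather than executed --- no inductive step is formulated and no parity invariant is constructed --- and you concede yourself that a genuinely new input appears to be required. Assertion (ii) is likewise not established: multiplicativity of $\gd$ from Theorem~\ref{ThmFP} only gives $\gd(\overline{V})=1$ for an invertible $\overline{V}$ and places no constraint on its order in the group of invertible objects. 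So the proposal is a faithful account of why the conjecture is difficult, but it is not a proof, and no proof exists in the paper to compare it against.
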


For a general finite group $\bold G$, it is known (\cite{EOf}, Section 4) that in any characteristic $p$
$$
\overline{{\rm Rep}\bold G}\simeq \overline{{\rm Rep} N(\bold G_p)}\simeq \overline{{\rm Rep} \bold G_p}^{N(\bold G_p)/\bold G_p},
$$
where $\bold G_p$ is a Sylow $p$-subgroup of $\bold G$, $N(\bold G_p)$ its normalizer, and the superscript $N(\bold G_p)/\bold G_p$ means taking the $N(\bold G_p)/\bold G_p$-equivariantization.
Returning to $p=2$, we thus find a short exact sequence
$$
1\to \Gamma_{\bold{G}_2}\to \Gamma_{\bold G}\to N(\bold G_2)/\bold G_2\to 1.
$$
So Conjecture \ref{gammatriv} is equivalent to the following seemingly more general conjecture.

\begin{conjecture}\label{gammatriv1} For any finite group $\bold G$, the natural surjective homomorphism $\Gamma_{\bold G}\twoheadrightarrow N(\bold G_2)/\bold G_2$ is an isomorphism $\Gamma_{\bold G}\simeq N(\bold G_2)/\bold G_2$.
\end{conjecture}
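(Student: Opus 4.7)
First I would observe that Conjecture~\ref{gammatriv1} is equivalent to Conjecture~\ref{gammatriv}: the exact sequence
$$1\to \Gamma_{\bold G_2}\to \Gamma_{\bold G}\to N(\bold G_2)/\bold G_2\to 1$$
displayed above identifies the kernel of the natural surjection with $\Gamma_{\bold G_2}$, so the surjection is an isomorphism precisely when $\Gamma_{\bold G_2}=1$. Thus the task reduces to proving $\Gamma_H=1$ for every finite $2$-group $H$. Writing $\overline{\Rep H}\simeq \Rep G_H$ (Tannakian by Corollary~\ref{peq2}) and applying Nagata's theorem in the form of Lemma~\ref{LemNag}, we obtain $1\to A^{\vee}\to G_H\to \Gamma_H\to 1$ with $A$ having only $2$-power torsion and $\Gamma_H$ profinite of odd order; the target is to show $\Gamma_H=1$. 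Equivalently, one must show that for every non-projective indecomposable $V\in\Rep H$ of odd $\kk$-dimension the image $\overline V$ is invertible in $\overline{\Rep H}$, which is Benson's Conjecture~\ref{beco} in its strengthened form.

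The approach I would try is induction on $|H|$, using a normal subgroup $K\lhd H$ of index two (which exists for every non-trivial $2$-group) and the symmetric tensor restriction functor $R\colon \overline{\Rep H}\to \overline{\Rep K}$. By the inductive hypothesis, $\overline{\Rep K}\simeq \Ve_B$ with $B$ an abelian group having no odd torsion. Each simple $X\in\overline{\Rep H}$ satisfies $R(X)=\bigoplus_{i} b_i$ for a finite family of lines $b_i\in B$, and $H$-conjugation on $K$ induces an action $\tau$ of $C_2=H/K$ on $B$ permuting the $b_i$. Granted (i) that $\tau$ is trivial and (ii) that each $R(X)$ is concentrated at a single degree in $B$, one could extract enough invertible objects to show $\overline{\Rep H}$ pointed: combining with the Frobenius functor $\Fr_+$, which by Theorem~\ref{ThmFP}(3,4) preserves Frobenius--Perron dimensions and intertwines $R$ via the diagram of Section~\ref{SecFrComm}, the Tannakian dictionary would identify $G_H$ with a commutative affine group scheme, closing the induction.

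The principal obstacle is that semisimplification does not commute with equivariantization: although $\Rep H\simeq (\Rep K)^{C_2}$, one cannot simply replace $\overline{\Rep H}$ by $(\overline{\Rep K})^{C_2}=(\Ve_B)^{C_2}$, and the latter fails to be pointed as soon as $\tau$ is nontrivial, while conversely negligible summands in $\Rep K$ can assemble into non-negligible simple objects of $\overline{\Rep H}$ with $\FPdim>1$. Thus both the triviality of $\tau$ and the absence of such ``hidden'' simples require genuine input from $2$-modular representation theory — for instance a proof that every odd-dimensional non-projective indecomposable $\kk H$-module is endotrivial modulo negligible direct summands, or a refined support-variety analysis for non-abelian $2$-groups. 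Producing such an input appears to go beyond the techniques developed in this paper, and is the essential point at which I expect the argument to get stuck; this is consistent with the statement remaining a conjecture.
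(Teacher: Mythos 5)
This statement is an open conjecture in the paper: no proof is given there, only the observation that the short exact sequence $1\to \Gamma_{\bold{G}_2}\to \Gamma_{\bold G}\to N(\bold G_2)/\bold G_2\to 1$ makes it equivalent to Conjecture~\ref{gammatriv} (triviality of $\Gamma_H$ for $2$-groups $H$), i.e.\ to the strengthened form of Benson's conjecture. Your reduction is exactly that same equivalence, and your assessment is accurate: the inductive attack via an index-two normal subgroup founders precisely because semisimplification does not commute with equivariantization, so the statement remains open both in your write-up and in the paper. There is nothing to correct; just be aware that what you have produced is (correctly) a reduction plus an obstruction analysis, not a proof, matching the paper's own treatment.
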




Finally, note that by the Feit-Thompson theorem, a finite group of odd order is solvable. Hence if $\Gamma_{\bold G}\ne 1$ then it has a non-trivial algebraic 1-dimensional representation. 
So Benson's Conjecture \ref{gammatriv} is equivalent to the following conjecture. 

\begin{conjecture} Let $\bold G$ be a finite 2-group, $V$ a non-trivial indecomposable representation of $\bold G$ over $\bold k$ 
of characteristic $2$, and $m$ an odd positive integer. Then 
$V^{\otimes m}$ does not contain the trivial representation $\kk$ as a direct summand. 
\end{conjecture}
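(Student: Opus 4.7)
The plan is to prove the equivalence of this statement with Conjecture~\ref{gammatriv} rigorously, reducing the hard content to that open conjecture. By Corollary~\ref{peq2} together with Lemma~\ref{LemNag}, the semisimplification $\overline{\Rep \bold G}$ is Tannakian, and the associated linearly reductive group scheme $G_{\bold G}$ fits into an extension $1 \to A_{\bold G}^\vee \to G_{\bold G} \to \Gamma_{\bold G} \to 1$, where $A_{\bold G}$ has no odd torsion and $\Gamma_{\bold G}$ is a profinite group of odd order. The target of the reduction is the assertion $\Gamma_{\bold G} = 1$, equivalently that $\overline{\Rep \bold G} \simeq \Ve_{A_{\bold G}}$ is pointed.

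For the direction ``Conjecture~\ref{gammatriv} $\Rightarrow$ the statement'', assume $\Gamma_{\bold G} = 1$ and let $V$ be a non-trivial indecomposable in $\Rep \bold G$. If $V$ is negligible, then $\overline{V^{\otimes m}} = 0$, so $\kk$ cannot be a direct summand of $V^{\otimes m}$ because additive functors preserve direct summands and $\overline{\kk} \ne 0$. Otherwise $\overline V = L_a$ is invertible in $\Ve_{A_{\bold G}}$ with $a \neq 0$, and the hypothesis that $\kk = L_0$ is a direct summand of $V^{\otimes m}$ would force $ma = 0$, contradicting the oddness of $m$ together with the absence of odd torsion in $A_{\bold G}$. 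For the converse, if $\Gamma_{\bold G} \neq 1$ then, applying the Feit--Thompson theorem to a nontrivial finite quotient, one obtains a nontrivial character $\chi$ of $\Gamma_{\bold G}$ of some odd prime order $q$, which pulls back to a nontrivial invertible object $L \in \overline{\Rep \bold G}$ with $L^{\otimes q} \cong \kk$. Choosing a non-negligible indecomposable lift $V \in \Rep \bold G$ of $L$ (possible by the standard correspondence between simples of the semisimplification and non-negligible indecomposables), representatives $f \colon \kk \to V^{\otimes q}$ and $g \colon V^{\otimes q} \to \kk$ of the splitting in $\overline{\Rep \bold G}$ satisfy $g \circ f - \id_\kk$ negligible; since $\kk$ is simple with $\End_{\bold G}(\kk) = \kk$ and $\id_\kk$ is non-negligible, the only negligible scalar is $0$, so $g \circ f = \id_\kk$. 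This exhibits $\kk$ as a direct summand of $V^{\otimes q}$ and contradicts the final statement.

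The main obstacle is the proof of Conjecture~\ref{gammatriv} itself, which is the genuinely difficult content. A natural strategy is induction on $|\bold G|$ using a normal subgroup $N \triangleleft \bold G$ of index $2$: by the inductive hypothesis $\overline{\Rep N} \simeq \Ve_{A_N}$ with $A_N$ free of odd torsion, so $\overline{\Rep \bold G}$ should be describable as an extension of this pointed Tannakian category by a $\BZ/2$-action together with appropriate $2$-cocycle data, and one would then need to show that no odd-order characters can be introduced by such an extension. Controlling the relevant cocycle data in order to exclude odd torsion in the Picard group appears to require genuinely new input beyond the categorical machinery of this paper---plausibly from the theory of endotrivial modules or from support variety computations in $H^\ast(\bold G,\kk)$---and this is exactly the content of Benson's open conjecture; I do not expect this final step to yield to the methods developed here.
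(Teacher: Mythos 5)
The statement you were asked to prove is an open conjecture in the paper: the authors give no proof, only the assertion (via Feit--Thompson) that it is equivalent to Conjecture~\ref{gammatriv}, and they explicitly remark that they do not know it even for $m=3$. Your write-up correctly establishes exactly that equivalence (both directions of your reduction are sound, including the use of the bijection between non-negligible indecomposables and simples of the semisimplification and the fact that the only negligible endomorphism of $\kk$ is $0$), and you rightly identify that the remaining content is the open conjecture itself, so your proposal matches the paper's treatment.
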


We don't know if this is true even for $m=3$.


The function ${\bold G}\mapsto (A,\Gamma)$ from \ref{DefFun} is only known in a few cases, in most cases $A$ is too big to describe explicitly (whereas for tame representation type, $A$ is countable, already for $(\BZ/2)^{\times 3}$ it has cardinality of $\kk$). Since they all confirm the conjectures in this section, we restrict to mentioning $A$ for 2-groups.
\begin{example}(\cite{A}, \cite[\S 3]{B2})
Let $n$ be a strictly positive integer and $q=2^n$.
\begin{center}
\begin{tabular}{|c|c|}
\hline
${\bold G}$& $A$\\
\hline
\hline
$\BZ/2^n$&$(\BZ/2)^{\times n-1} $\\
\hline
$(\BZ/2)^{\times 2}$&$\BZ $\\
\hline
$D_{4q}$&\mbox{infintely generated torsion free}\\
\hline
\end{tabular}
\end{center}
For the generalized quaternion groups $Q_{4q}$ (including $Q_8$), it is known there is $4$-torsion in $A$, but it is not known that $\Gamma=1$.
\end{example}

\subsection{Characteristic $p>2$} In this subsection we prove a generalization 
of Theorem 1.4 in \cite{B2}. This is only one example of many possible applications 
of this type, which we give here as a ``proof of concept".

We assume that $\bold k$ is an algebraically closed field 
of characteristic $p\ge 3$ and ${\bold G}$ a finite group scheme over $\bold k$. We 
will freely use the terminology of \cite{B2}. 

Recall  that to every finite-dimensional representation $V$ of ${\bold G}$, Friedlander, Pevtsova and Suslin \cite{FPS}
attached a {\it generic Jordan type} $[1]^{m_1}...[p]^{m_{p}}$ and the corresponding {\it stable generic Jordan type} 
$[1]^{m_1}...[p-1]^{m_{p-1}}$ (see also \cite{B2} and references therein). Namely, we have a variety of $\pi$-points $\Pi({\bold G})$ as introduced in \cite[\S 2]{FPS}, which comprises equivalence classes of homomorphisms $\alpha: \kk[x]/x^p\to \kk{\bold G}$ factoring through the group algebra of an abelian subgroup scheme of ${\bold G}$. For a fixed choice of irreducible component $Z\subset\Pi(\bold G)$, there is a unique generic Jordan type $[1]^{m_1}...[p]^{m_{p}}$ of $\alpha^*V$, where $m_i$ denotes the number of copies of the indecomposable of length $i$ in a decomposition into indecomposable $\kk[x]/x^p$-modules of $\alpha^*V$, for each $V\in\Rep\bold G$, see \cite[Theorem~3.4]{FPS}.


\begin{lemma}\label{le1} One has $\delta(V)\le \sum_{k=1}^{p-1} [k]_qm_k$. 
\end{lemma}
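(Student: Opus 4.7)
The plan is to build, from the generic $\pi$-point, a symmetric tensor functor to $(\Ver_p)_K$ and then invoke the $\gd$-invariance of Theorem~\ref{ThmFP}(4).

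First, I extend scalars from $\kk$ to the residue field $K$ at the generic point of $Z$, so that the generic $\pi$-point is realised on the nose by an algebra homomorphism $\alpha:K[x]/x^p\to K\mathbf{G}_K$ with $\alpha^\ast V_K\cong\bigoplus_k[k]^{m_k}$. Because extension of scalars of a semisimple $\kk$-linear tensor category over an algebraically closed field preserves simples (and hence $\gd$), one has $\delta(V)=\gd(\overline V)=\gd(\overline{V_K})$.

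Next, I exploit the classification of $\pi$-points: within the equivalence class of $\alpha$ one can choose a representative that factors through the inclusion of a rank-one abelian unipotent $p$-subgroup scheme $D\hookrightarrow\mathbf{G}_K$, with $D$ isomorphic to one of $\BZ/p$, $\mu_p$, or $\alpha_p$, and with $\alpha$ identified with the standard algebra presentation of $KD$. For a finite group this uses a cyclic shifted subgroup inside an elementary abelian Sylow; for a finite group scheme it uses a Friedlander--Pevtsova--Suslin $1$-parameter subgroup. In either case, restriction along $D\hookrightarrow\mathbf{G}_K$ yields a $K$-linear symmetric tensor functor $R:\Rep\mathbf{G}_K\to\Rep D$.

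Since tensor functors preserve and reflect negligibility of morphisms (via preservation of categorical traces), $R$ descends to a faithful exact symmetric tensor functor $\overline R:\overline{\Rep\mathbf{G}_K}\to\overline{\Rep D}\simeq(\Ver_p)_K$, sending the indecomposable $[k]$ to $L_k$ for $1\le k\le p-1$ and killing the projective $[p]$. Consequently $\overline R(\overline{V_K})=\bigoplus_{k=1}^{p-1}m_kL_k$. Both $\overline{\Rep\mathbf{G}_K}$ and $(\Ver_p)_K$ are Frobenius exact pre-Tannakian of moderate growth, so Theorem~\ref{ThmFP}(4) applied to $\overline R$ gives
\[
\delta(V)\;=\;\gd(\overline{V_K})\;=\;\gd\bigl(\overline R(\overline{V_K})\bigr)\;=\;\FPdim\Bigl(\bigoplus_{k=1}^{p-1}m_kL_k\Bigr)\;=\;\sum_{k=1}^{p-1}m_k[k]_q,
\]
which in fact yields equality and in particular the asserted inequality.

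The main obstacle is the subgroup-scheme realisation step: arranging the generic $\pi$-point to arise from a genuine Hopf embedding of a rank-one $p$-subgroup scheme. This is non-trivial input from $\pi$-point theory and is where the distinction between finite groups and infinitesimal group schemes matters. Once granted, the descent to semisimplifications and the comparison in $\Ver_p$ are formal applications of the machinery developed earlier in the paper.
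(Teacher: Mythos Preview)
Your approach has a genuine gap at the descent step. The assertion that tensor functors preserve negligibility of morphisms is false: preservation of traces only gives $\mathrm{tr}(F(g)F(f))=0$ for $g$ in the image of $F$, not for arbitrary $h:F(Y)\to F(X)$. Restriction $\Rep G\to\Rep H$ sends negligible morphisms to negligible ones precisely when $p\nmid[G:H]$ (this is exactly the hypothesis invoked in \cite[\S 3.3]{EOs} and in Section~\ref{SecEnh} above), and here $[\mathbf{G}_K:D]=|\mathbf{G}|/p$ is divisible by $p$ whenever $p^2\mid|\mathbf{G}|$. Concretely, for $\mathbf{G}=(\BZ/p)^2$ the uniserial $p$-dimensional module $V=\kk\mathbf{G}/(h-1)$ is negligible in $\Rep\mathbf{G}$, yet $V|_{\langle h\rangle}=\be^{\oplus p}$ is not negligible in $\Rep\langle h\rangle$; so restriction does not descend to semisimplifications.

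The subgroup-scheme realisation is equally problematic. For a finite group a cyclic shifted subgroup is only an algebra embedding $\kk[t]/t^p\hookrightarrow \kk E$, not a Hopf embedding; the generic $\pi$-point of $(\BZ/p)^2$ does not arise from any of its $p+1$ genuine order-$p$ subgroups, even after base change to $K$ (the group remains \'etale with the same finite list of subgroups). So in general there is no rank-one subgroup scheme $D$ making $R$ a symmetric tensor functor with the prescribed Jordan type.

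A warning sign is that your argument would yield the \emph{equality} $\delta(V)=\sum_k m_k[k]_q$, although the $m_k$ depend on the chosen component $Z\subset\Pi(\mathbf{G})$ while $\delta(V)$ does not. The paper's proof is far more elementary and avoids all of this: since stable generic Jordan types multiply by the Verlinde rule \cite{FPS,B2}, setting $X=\bigoplus_k m_kL_k\in\Ver_p$ one has $s_n(V)=\ell(X^{\otimes n})$; and each non-negligible indecomposable summand of $V^{\otimes n}$, having dimension prime to $p$, has non-empty stable generic Jordan type, giving $\delta_n(V)\le s_n(V)$. Taking $n$-th roots yields the inequality.
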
 

\begin{proof} Let $X:=\oplus_{k=1}^{p-1} m_kL_k\in \Ver_p$.
It is known that the tensor product for stable generic Jordan types is given by the Verlinde rule, i.e., is the same as in ${\rm Ver}_p$ (\cite{FPS}, \cite{B2}, Section 4). 
Thus the number $s_n(V)$ of summands in the stable generic Jordan type 
of $V^{\otimes n}$ equals the length of the object 
$X^{\otimes n}$. Since the dimension of a non-negligible indecomposable object in $\Rep\bold G$ is not divisible by $p$, its stable generic Jordan type is not empty.
Applying this to all indecomposable non-negligible direct summand in $V^{\otimes n}$ shows
$$
\delta_n(V)\le s_n(V)=\ell(X^{\otimes n}). 
$$
Taking $n$-th root and limit $n\to \infty$, we obtain 
$$
\delta(V)\le {\rm FPdim}(X)=\sum_{k=1}^{p-1}[k]_qm_k. 
$$   
\end{proof}

Following \cite{B2}, we will say that a finite dimensional representation $V$ of a finite group ${\bold G}$ is {\it $p'$-invertible} if $V\otimes V^*\cong \bold 1\oplus P$ where $P$ is negligible (i.e., a direct sum of indecomposable modules of dimension divisible by $p$).
Recall that by \cite[Lemma~4.2]{B2}, if a stable generic Jordan type of $V$ is $[1]$ or $[p-1]$ then $V$ is $p'$-invertible. So consider the next simplest case -- stable generic Jordan types
$[2], [p-2], [1]^2, [p-1]^2$. 

\begin{theorem} Let $M$ be an indecomposable finite dimensional representation of a finite group ${\bold G}$ over $\bold k$ which is not $p'$-invertible. Suppose that a stable generic Jordan type of $M$ is 
$[2]$, $[p-2]$, $[1]^2$, or $[p-1]^2$. Then $S^2M$ (for $[p-2],[p-1]^2$) or $\wedge^2M$ (for $[2],[1]^2$) is $p'$-invertible. Moreover, the McKay graph $Q_M$ is a quotient of a graph of the form $\Bbb ZT$ by an admissible group of automorphisms, where $T$ is a directed labelled tree. The associated undirected labelled tree $\overline T$ is uniquely determined by $Q_M$ and is one of the following.

(i) For Jordan type $[2],[p-2]$ (in which case $p\ge 5$): Dynkin diagram $A_{p-1}$. 

(ii) For Jordan type $[1]^2,[p-1]^2$: Dynkin diagram $A_{p-1}$, affine Dynkin diagram $\widetilde D_n$, $n\ge 4$, $\widetilde E_6$, $\widetilde E_7$, $\widetilde E_8$, or the infinite diagram $D_\infty$. Moreover, $A_{p-1}$, $\widetilde E_6$, $\widetilde E_7$, $\widetilde E_8$ do not occur for $p=3$ and $\widetilde E_8$ does not occur for $p=5$.  
\end{theorem}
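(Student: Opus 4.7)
The plan is to work in the semisimple pre-Tannakian category $\mathcal C := \langle \overline M\rangle \subseteq \overline{\Rep \bold G}$ generated by the image of $M$ in the semisimplification. By Theorem~\ref{MainThmText} there is a unique fiber functor $F\colon\mathcal C \to \Ver_p$, and the proof of Lemma~\ref{le1} identifies $F(\overline M) = \bigoplus_{k=1}^{p-1} m_k L_k$ in $\Ver_p$ from the stable generic Jordan type, so the four cases correspond respectively to $F(\overline M)$ being $L_2$, $L_{p-2}$, $\mathbf{1}\oplus\mathbf{1}$, and $L_{p-1}\oplus L_{p-1}$. The McKay graph $Q_M$ coincides with the fusion graph of $\overline M$ on the simple objects of $\mathcal C$.

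I would first prove $p'$-invertibility. Since semisimplification is a symmetric tensor functor, $\wedge^2\overline M = \overline{\wedge^2 M}$ and $S^2\overline M = \overline{S^2 M}$, and $p'$-invertibility of the exterior (resp.\ symmetric) square of $M$ is equivalent to invertibility of the corresponding object in $\mathcal C$, which by Theorem~\ref{ThmFP}(3) reduces to checking that its image under $F$ in $\Ver_p$ has Frobenius--Perron dimension $1$. For $[1]^2$, $\wedge^2(\mathbf 1\oplus\mathbf 1) = \mathbf 1$; for $[p-1]^2$, the fact that $L_{p-1}$ is the odd line of $\sVec\subset\Ver_p$ gives $S^2(L_{p-1}\oplus L_{p-1}) = \mathbf{1}$; and for $[2]$ and $[p-2]$ one lifts to $\Rep C_p$, where $V_2\otimes V_2 = V_1 \oplus V_3$ with $\wedge^2 V_2 = V_1$, while $V_{p-2}\otimes V_{p-2} = V_1 \oplus V_3 \oplus (p-4)V_p$ with $V_1 \subseteq S^2V_{p-2}$ and the remaining non-projective summand $V_3$ in the exterior part, whence after semisimplification $\wedge^2 L_2 = \mathbf 1 = S^2 L_{p-2}$.

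For the classification of $Q_M$, I would split according to whether $F(\overline M) \in \sVec\subset\Ver_p$. In cases $[1]^2$ and $[p-1]^2$ it does, so the restriction of $F$ to $\mathcal C$ defines a super fiber functor $\mathcal C \to \sVec$, identifying $\mathcal C \simeq \Rep(\mathbf H, z)$ for a linearly reductive affine supergroup scheme $(\mathbf H, z)$ (linear reductivity from semisimplicity of $\mathcal C$); under this equivalence $\overline M$ becomes a faithful $2$-dimensional super-representation $V$ of $\mathbf H$, purely even for $[1]^2$ and purely odd for $[p-1]^2$. Combining Nagata's structure theorem (Lemma~\ref{LemNag}) with the classical (super-)McKay correspondence enumerates the possible $(\mathbf H,V)$: binary polyhedral subgroups of $SL_2$ of order coprime to $p$ contribute $\tilde D_n$, $\tilde E_6$, $\tilde E_7$, $\tilde E_8$, and the stated exclusions in characteristics $3$ and $5$ are exactly the coprimality requirements; extensions by $\mathbb G_m$ contribute $D_\infty$; and extensions involving $\mu_p$ (or a similar diagonalizable piece compatible with the super-structure) contribute $A_{p-1}$, which requires $p\ge 5$. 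In cases $[2]$ and $[p-2]$, which automatically force $p\ge 5$ (otherwise $L_2$ would be invertible), $F(\overline M)$ generates $\Ver_p$, and the Verlinde fusion rule $L_2 \otimes L_j = L_{j-1} \oplus L_{j+1}$ combined with FP-dimension bounds from Theorem~\ref{ThmFP}(1) forces the underlying tree of $Q_M$ to be $A_{p-1}$.

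The main obstacle will be the super-McKay correspondence in characteristic $p$, in particular the purely odd case $[p-1]^2$, where a parity-twisted version of the classical statement is required and one must carefully track which linearly reductive (super)group schemes admit a faithful purely odd $2$-dimensional representation. The assertion that $Q_M$ is an admissible quotient of $\mathbb Z T$ follows from the standard structural theory of stable translation quivers attached to the fusion action of a fixed object in a semisimple symmetric tensor category, which applies without change in our setting.
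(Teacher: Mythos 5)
Your proposal founders at its very first step: you assert that the stable generic Jordan type $[1]^{m_1}\cdots[p-1]^{m_{p-1}}$ of $M$ determines $F(\overline M)=\bigoplus_k m_kL_k$, so that the four cases correspond to $F(\overline M)$ being $L_2$, $L_{p-2}$, $\be\oplus\be$, $L_{p-1}^{\oplus 2}$. This is not justified and is not true in general. The generic Jordan type is computed by restricting along $\pi$-points, which is an entirely different operation from applying the fiber functor to the image of $M$ in the semisimplification; Lemma~\ref{le1} only yields the \emph{inequality} $\delta(M)\le\sum_k[k]_qm_k$ (because every non-negligible summand has nonempty stable Jordan type), not an equality, and certainly not an identification of objects. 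Your identification would force $\delta(M)=2$ whenever the type is $[1]^2$ or $[p-1]^2$, which would wrongly exclude the diagram $A_{p-1}$ from case~(ii) of the statement — the theorem explicitly allows it there (for $p>3$). The correct argument pins down $\delta(M)$ by combining the upper bound of Lemma~\ref{le1} with the lower bound $\delta(M)>1$ (from $M$ not being $p'$-invertible) and the rigidity of the possible values $\sum_k[k]_qm_k$ from Theorem~\ref{finti}(1): this gives $\delta(M)=[2]_q$ in case (i) and $\delta(M)\in\{[2]_q,2\}$ in case (ii), and the $p'$-invertibility of $S^2M$ or $\wedge^2M$ then follows from the formulas for $\delta(S^2)$ and $\delta(\wedge^2)$ in Theorem~\ref{finti}(1), not from a direct computation of $F(\overline M)$. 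Consequently your $p'$-invertibility computations, while arithmetically fine inside $\Ver_p$, are applied to objects that need not be the ones at hand.

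There is a second, acknowledged gap: you defer the classification in the $\delta(M)=2$ case to a "super-McKay correspondence in characteristic $p$", flagging the purely odd case as the main obstacle. This is exactly the hard content, and the intended resolution is different and cleaner: one passes to $\C_0\boxtimes\sVec$ and replaces $\overline M$ by $X=\overline M\otimes\psi$ (with $\psi$ the odd line) whenever it is $S^2\overline M$ rather than $\wedge^2\overline M$ that is invertible, so that in all cases $\wedge^2X$ is invertible; then \cite[Theorem~4.79]{BEO} applies directly, giving either the subcategory of $\Ver_p$ generated by $L_2$ (tree $A_{p-1}$, when $\delta=[2]_q$) or $\C_X=\Rep(K,z)$ for a linearly reductive $K\subset GL_2(\kk)$ with $\kk^2$ irreducible (when $\delta=2$), after which the ordinary McKay classification of linearly reductive subgroup schemes of $SL_2(\kk)$ yields the list and the characteristic exclusions. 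Without that twisting trick and that input, your outline does not close.
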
 

\begin{proof} By Lemma \ref{le1}, we have $\delta(M)\le [2]_q$ in case (i) and $\delta(M)\le 2$ in case (ii). Since we also have $\delta(M)>1$ (as $M$ is not $p'$-invertible), by Theorem \ref{finti} we have 
$\delta(M)=[2]_q$ in case (i) and $\delta(M)=[2]_q$ (with $p>3$) or $\delta(M)=2$ in case (ii). 
This implies that $S^2M$ or $\wedge^2M$ (whichever has dimension $1$ mod $p$) is $p'$-invertible. 

Let $\overline M$ be the image of $M$ 
in the semisimplification $\overline{\Rep {\bold G}}$; then $S^2\overline M$ or $\wedge^2\overline M$ 
is invertible. Consider the tensor subcategory $\C_0\subset \overline{\Rep {\bold G}}$ generated by $\overline M$  
and define the product $\C:=\C_0\boxtimes \sVec$. Let $X\in \C$ be $\overline M$ if $\wedge^2\overline M$ is invertible, otherwise $X=\overline M\otimes \psi$, where 
$\psi\in \sVec$ is the generating invertible object. Then $\wedge^2X$ is invertible. 

If $\delta(M)=[2]_q$ (and $p\ge 5$), the result follows from\footnote{We only use Theorem 4.79 in the simple special case of semisimple categories.} \cite[Theorem 4.79]{BEO},  applied to the subcategory $\C_X\subset 
\C$ generated by $X$, and the tree $\overline{T}$ must be the Dynkin diagram $A_{p-1}$. 

On the other hand, if $\delta(M)=2$, then \cite[Theorem~4.79]{BEO} tells us that $\C_X=\Rep(K,z)$ for a linearly reductive subgroup scheme $K\subset GL_2(\bold k)$, such that the tautological representation $\bold k^2$ of $K$ is irreducible. So the result follows from the well known classification of such subgroup schemes. Namely, the undirected graph $\overline{T}$ is just the McKay graph of $L:=K\cap SL_2(\bold k)$, and classification of such subgroup schemes $L\subset SL_2(\bold k)$ 
is essentially the same as in characteristic zero (given by McKay's correspondence with simply laced affine Dynkin diagrams, including infinite ones), except that types $\widetilde A_n$ and $A_\infty^\infty$ (corresponding to abelian subgroup schemes) are excluded due to irreducibility of $\bold k^2$ and $A_\infty$ (corresponding to the entire $SL_2(\bold k)$) due to linear reductivity. Also we see that $\widetilde E_m$ do not occur in characteristic $3$ and $\widetilde E_8$ in characteristic $5$ (as the corresponding Kleinian groups have elements of order $3$ and $5$, respectively) and $A_2$ does not occur in characteristic $3$. This completes the proof. 
\end{proof} 

In particular, in characteristic 3 we obtain a strengthening of \cite{B2}, Theorem 1.4, with a number of diagrams ruled out; namely, the only possible diagrams are $\widetilde D_n, n\ge 4$ and $D_\infty$. 

We note that all the listed diagrams do occur (except possibly $A_{p-1}$ in case (ii)) if 
we take arbitrary (not necessarily unipotent) group schemes ${\bold G}$; e.g., we can simply take ${\bold G}=\Bbb Z/p$
and $V=\bold k^2$ in case $1$, and ${\bold G}=K$ in case (ii). However, it is not obvious which of them occur 
for unipotent group schemes.

\subsection{$p$-adic dimensions}
Assume $p>0$.
\begin{theorem}
Let $\C$ be a symmetric tensor category.
\begin{enumerate}
\item For $X\in\C$, there exists $\Dim_aX\in\BZ_p$, with $p$-adic expansion 
$$
\Dim_aX=\sum_{k\ge 0}t_kp^k,
$$ 
such that in $\mathbb{F}_p[[z]]$ we have 
$$\sum_{j\ge 0}\dim (\mathrm{A}^jX)z^j\;=\; \prod_{k\ge 0} (1+z^{p^k})^{t_k}.$$
\item If $\C$ is of moderate growth, then $\Dim_aX\in\BN\subset\BZ_p$ and $\Dim_aX\le \ad(X)$.
\item If $\C$ is of moderate growth and Frobenius exact, then $\Dim_aX= \ad(X)$.
\end{enumerate}
\end{theorem}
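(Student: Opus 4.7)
Set $a_j := \dim(\mathrm{A}^j X) \in \End(\be) = \kk$ and form the generating series $f_X(z) := \sum_{j \ge 0} a_j z^j$. A preliminary observation is that categorical dimensions in a symmetric tensor category in characteristic $p$ lie in the prime subfield $\mathbb{F}_p \subset \kk$, so $f_X \in \mathbb{F}_p[[z]]$.

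For part~(1), by Lucas's congruence combined with $(1+z)^{p^k} = 1 + z^{p^k}$ in $\mathbb{F}_p[[z]]$, the stated product formula is equivalent to the existence of a (necessarily unique) $n = \Dim_a X \in \BZ_p$ with $f_X(z) = (1+z)^n$. Reading off $p$-adic digits then forces $t_k \equiv \dim\mathrm{A}^{p^k}X \pmod{p}$. To establish such an $n$ categorically I would either (a) verify the ``categorical Lucas identity'' $\dim\mathrm{A}^j X \equiv \prod_k \binom{\dim \mathrm{A}^{p^k}X}{j_k} \pmod{p}$ (where $j = \sum_k j_k p^k$ is the base-$p$ expansion), or, equivalently, (b) verify the functional equation $f_X(z+w+zw) = f_X(z)\, f_X(w)$ in $\mathbb{F}_p[[z,w]]$ characterizing ``binomial'' power series as the characters of the formal multiplicative group. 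Either route rests on the decomposition~\eqref{eqAlt} together with additional categorical input; alternatively one may import the construction of the $p$-adic dimension from~\cite{EHO}.

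Part~(2) is then immediate from~(1) and Corollary~\ref{CorChain}: moderate growth gives $\ad X < \infty$, so $f_X \in \mathbb{F}_p[z]$ has degree at most $\ad X$; but $(1+z)^n \in \mathbb{F}_p[[z]]$ is a polynomial precisely when $n \in \BN \subset \BZ_p$, in which case its degree equals $n$, forcing $\Dim_a X \in \BN$ and $\Dim_a X \le \ad X$.

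The main novelty is part~(3). Given~(2), I need only show $\deg f_X = \ad X$, i.e., $\dim \mathrm{A}^{\ad X}X \ne 0$ in $\mathbb{F}_p$. I invoke Theorem~\ref{ThmAllFields} to produce a fiber functor $F:\C \to (\Ver_p)_{\KK}$ for some field extension $\kk \hookrightarrow \KK$. Since $F$ is a symmetric tensor functor it intertwines the functors $\mathrm{A}^j$ and preserves categorical dimensions, so $f_X = f_{FX}$ and $\Dim_a X = \Dim_a(FX)$; similarly $\ad X = \ad(FX)$ because $F$ is exact and faithful. It therefore suffices to verify $\Dim_a = \ad$ on $(\Ver_p)_{\KK}$. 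Decomposing $FX \simeq \bigoplus_{i=1}^{p-1} L_i^{\oplus m_i}$ and invoking additivity of $\ad$ (Lemma~\ref{LemXY2}) and of $\Dim_a$ (from $f_{X \oplus Y} = f_X f_Y$) reduces this to the case $X = L_i$. There, the canonical lift of $L_i$ to $\Rep C_p$ is the Jordan block $J_i$ of vector-space dimension $i$, and for $j \le i < p$ the object $\mathrm{A}^j L_i$ lifts to $\Lambda^j J_i$ of vector-space dimension $\binom{i}{j}$; hence $\dim \mathrm{A}^j L_i \equiv \binom{i}{j} \pmod{p}$ (which is nonzero for $0 \le j \le i$). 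Consequently $f_{L_i}(z) = (1+z)^i$ in $\mathbb{F}_p[[z]]$, so $\Dim_a L_i = i = \ad L_i$ by Example~\ref{ExVerpad}, completing the reduction.

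The main obstacle is the argument for part~(1), specifically the categorical Lucas identity (or its equivalent functional-equation formulation); parts (2) and (3) then follow by routine manipulations, with (3) depending crucially on Theorem~\ref{ThmAllFields} to bring the problem back into $\Ver_p$ where everything reduces to an elementary binomial-coefficient computation.
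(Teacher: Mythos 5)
Your proposal is correct and takes essentially the same route as the paper: part (1) is in both cases deferred to \cite[Theorem 2.3]{EHO} (the paper merely notes that Claims 1 and 2 of loc.\ cit.\ need updating and leaves this as an exercise), part (2) is the same degree argument via Corollary~\ref{CorChain}, and part (3) is the same reduction to $\Ver_p$ by a fiber functor followed by the check $\Dim_a L_i=i=\ad(L_i)$ on simple objects (Examples~\ref{ExVerpad} and~\ref{ExDima}). The only cosmetic difference is that you invoke Theorem~\ref{ThmAllFields} over a field extension, whereas the paper, working over an algebraically closed field throughout Section 8, applies Theorem~\ref{MainThmText} directly.
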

\begin{proof}
For part (1), the proof of \cite[Theorem~2.3]{EHO} carries over almost verbatim, it suffices to update the proofs of Claims 1 and 2 in \cite[2.2]{EHO}, which is left as an exercise. By Corollary~\ref{CorChain}, we know $\ad(X)<\infty$, so $\mathrm{A}^jX=0$ if $j>\ad(X)$. Part (2) then follows. By Theorem~\ref{MainThmText} it suffices to prove part (3) for $\Ver_p$, which follows from Example~\ref{ExDima} below.
\end{proof}

\begin{example}\label{ExDima}
\begin{enumerate}
\item By Example~\ref{ExVerpad}, in $\Ver_p$ we have $\Dim_aL_i=i=\ad(L_i)$ for all $0<i<p$.
\item For the projective object $P$ in $\Ver_4^+$, we have $\Dim_aP=4=\ad(P)$.
\end{enumerate}
\end{example}

\appendix

\section{A faithfulness result for symmetric groups }
\begin{center}
\textbf{by Alexander Kleshchev}
\end{center}
\vspace{2mm}

In the entire section, we let $\kk$ be a field of characteristic $p>0$.

\begin{proposition}\label{PropSasha}
For each $n\in\BN$ there exists an $m\ge n$ such that the composite morphism
$$\kk S_n \hookrightarrow \kk S_m\to\End_{\kk}(P_0),$$
is injective, where the first morphism is induced from the inclusion $S_n<S_m$ (we interpret $S_l$ as the permutation group of $\{1,2,\cdots,l\}$) and $P_0$ is the projective cover of the trivial simple $\kk S_m$-module.
\end{proposition}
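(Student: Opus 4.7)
Plan. The goal is to find $m\ge n$ such that $P_0$ is a faithful $\kk S_n$-module via the standard inclusion $S_n<S_m$. Since $\kk S_m$ is free as a right $\kk S_n$-module, the restriction $\mathrm{Res}^{S_m}_{S_n}P_0$ is automatically a projective $\kk S_n$-module. For the symmetric algebra $\kk S_n$, whose simple modules $D^\mu(n)$ are all self-dual, a projective module is faithful if and only if it is a progenerator, i.e.\ contains the indecomposable projective $P^\mu(n)$ as a direct summand for every $p$-regular partition $\mu\vdash n$. So the task reduces to arranging this containment for all $\mu$ simultaneously with a single $m$.

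By Frobenius reciprocity together with the symmetric-algebra identities $\dim_\kk\Hom_{\kk S_n}(P^\mu(n),N)=[N:D^\mu(n)]$ and (via self-duality of the $D^\mu(n)$) $\dim_\kk\Hom_{\kk S_n}(M,P^\mu(n))=[M:D^\mu(n)]$, the multiplicity of $P^\mu(n)$ as a summand of $\mathrm{Res}^{S_m}_{S_n}P_0$ equals
$$\dim_\kk\Hom_{\kk S_n}(\mathrm{Res}^{S_m}_{S_n}P_0,D^\mu(n))=\dim_\kk\Hom_{\kk S_m}(P_0,\Ind^{S_m}_{S_n}D^\mu(n))=[\Ind^{S_m}_{S_n}D^\mu(n):\kk],$$
the composition multiplicity of the trivial $\kk S_m$-module inside the induced module. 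So the whole proposition becomes: find $m\ge n$ such that for every $p$-regular $\mu\vdash n$ the trivial $\kk S_m$-module occurs as a composition factor of $\Ind^{S_m}_{S_n}D^\mu(n)$.

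For $\mu=(n)$ this is immediate, since $\Ind^{S_m}_{S_n}\kk$ is a permutation module and contains the trivial as a submodule. For non-trivial $p$-regular $\mu$ the trivial cannot appear in the head of $\Ind^{S_m}_{S_n}D^\mu$ (by Frobenius that would force $(D^\mu)^{S_n}\ne 0$), so it must live strictly inside the radical series for $m$ sufficiently large. This last existence statement is the main obstacle and the technical heart of the proof. I would attack it by a combination of Specht-filtration arguments on the induced modules, decomposition number / Brauer character computations, and modular branching rules for symmetric groups, in order to show that $[\Ind^{S_m}_{S_n}D^\mu(n):\kk]$ becomes strictly positive once $m$ is large enough in a suitable congruence class modulo $p$. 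Since only finitely many $p$-regular $\mu\vdash n$ arise, the individual thresholds combine into a single uniform $m=m(n)$ working for all $\mu$ simultaneously.
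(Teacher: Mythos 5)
Your reduction is correct and coincides with the one in the paper: $\mathrm{Res}^{S_m}_{S_n}P_0$ is projective, and by Frobenius reciprocity it is a progenerator (hence faithful) exactly when the trivial $\kk S_m$-module occurs as a composition factor of $\Ind^{S_m}_{S_n}D$ for every simple $\kk S_n$-module $D$. But the proposal stops precisely where the real work begins. The statement that $[\Ind^{S_m}_{S_n}D^\mu : \kk]>0$ for $m$ large is the entire content of the proposition, and "I would attack it by a combination of Specht-filtration arguments, decomposition number computations, and modular branching rules" is a list of tools, not an argument. As written, the key existence claim is unproved, so there is a genuine gap.

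For comparison, the paper closes this gap in two concrete steps. First, using the Brundan--Kleshchev branching theory (addition of conormal boxes), one shows that iterated induction of any simple $D^\mu$ acquires a subquotient $D^{\rho_k}$ with $\rho_k=(k^{p-1},(k-1)^{p-1},\dots,1^{p-1})$ a $p$-core, so that $D^{\rho_k}=S^{\rho_k}$ is a Specht module. Second, induced Specht modules carry Specht filtrations with subquotients $S^\nu$ for all $\nu\supseteq\lambda$, and an explicit combinatorial construction places any partition inside one satisfying James's condition (Theorem 24.4 of James's book), which forces a trivial submodule. You would need to supply arguments of this kind (or equivalents) for your plan to become a proof. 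A secondary issue: your closing claim that the finitely many thresholds "combine into a single uniform $m$" is not automatic, especially since you allow the threshold to depend on a congruence class of $m$ modulo $p$ that could differ with $\mu$; what is actually needed (and what the paper uses) is that once the trivial composition factor appears at some level $n_2$ it persists for every $m\ge n_2$, because further induction of the trivial module always has a trivial quotient.
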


It is sufficient to prove Proposition~\ref{PropSasha} for algebraically closed fields, so for the remainder of the section we assume $\kk=\overline{\kk}$. We follow the conventions and notation for Specht and simple modules of symmetric groups as in \cite{James}.

\begin{lemma}\label{LemS1}
For each simple $\kk S_n$-module $D$, there exists $n_1\ge n$ such that $\Ind^{\kk S_{n_1}}_{\kk S_n}D$ has a subquotient isomorphic to a Specht module.
\end{lemma}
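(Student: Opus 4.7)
The plan is to produce a Specht subquotient of $\mathrm{Ind}^{S_{n_1}}_{S_n} D$ via Frobenius reciprocity, combined with the branching rules for Specht modules and Kleshchev's modular branching rule for simples.

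Write $D = D^\lambda$ for a $p$-regular partition $\lambda\vdash n$, and look for a candidate $\mu \vdash n_1$ such that there is a nonzero morphism between $\mathrm{Ind}^{S_{n_1}}_{S_n} D^\lambda$ and $S^\mu$. The first step is to identify where $D^\lambda$ lives inside restrictions of Specht modules. For any $\mu$ obtained from $\lambda$ by adding $n_1-n$ boxes, the standard branching rule gives a Specht filtration of $\mathrm{Res}^{S_{n_1}}_{S_n}S^\mu$ whose factors $S^\nu$ are indexed by partitions $\nu \vdash n$ obtained by removing a horizontal strip of $n_1 - n$ boxes; in particular $S^\lambda$ occurs as one such factor, so $D^\lambda$ is a composition factor of $\mathrm{Res}^{S_{n_1}}_{S_n} S^\mu$.

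The second step is to promote ``composition factor'' to ``socle element'', so that Frobenius reciprocity $\mathrm{Hom}_{S_{n_1}}(\mathrm{Ind}\, D^\lambda, S^\mu)=\mathrm{Hom}_{S_n}(D^\lambda, \mathrm{Res}\,S^\mu)$ actually produces a nonzero map. Here I would invoke Kleshchev's modular branching rule, which expresses the socle of $\mathrm{Res}^{S_{n+1}}_{S_n}D^\mu$ as a multiplicity-free sum of simples $D^{\tilde e_i\mu}$ indexed by good removable nodes of $\mu$; iterating and analysing the socle of a restricted Specht module in terms of its head $D^\mu$, one shows that for a suitable $\mu$ extending $\lambda$ by a well-chosen sequence of good nodes, $D^\lambda$ indeed lies in the socle of $\mathrm{Res}^{S_{n_1}}_{S_n}S^\mu$. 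This yields a nonzero morphism $\phi:\mathrm{Ind}^{S_{n_1}}_{S_n}D^\lambda \to S^\mu$.

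The main obstacle is to guarantee that $S^\mu$ itself (rather than merely a proper subquotient of it) appears as a subquotient of $\mathrm{Ind}^{S_{n_1}}_{S_n}D^\lambda$. One clean way to bypass the delicate surjectivity question for $\phi$ is to push the construction further and reach a partition $\mu$ for which $D^\mu = S^\mu$ (a Jantzen–Seitz partition): then this very Specht module is literally a simple composition factor of $\mathrm{Ind}^{S_{n_1}}_{S_n} D^\lambda$, and hence a subquotient. The existence of such a $\mu$ reachable from $\lambda$ via iterated crystal operators $\tilde f_i$ should follow from combinatorial properties of the crystal of the basic $\widehat{\mathfrak{sl}}_p$-module parametrising $p$-regular partitions; alternatively, one may argue directly using dominance-order arguments and the cyclicity of $S^\mu$ (generated by any polytabloid) to show that for $\mu$ of the form $(\lambda_1+k,\lambda_2,\dots)$ with $k$ large enough, the embedding $D^\lambda\hookrightarrow \mathrm{Res}\,S^\mu$ forces $\phi$ to be surjective. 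This is the step I expect to be the most technically involved.
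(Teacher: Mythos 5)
Your final paragraph lands on essentially the strategy the paper uses: instead of fighting the surjectivity of a map $\Ind^{\kk S_{n_1}}_{\kk S_n}D^\lambda\to S^\mu$, aim for a partition $\mu$ whose Specht module is \emph{simple}, so that $S^\mu=D^\mu$ is literally a composition factor (hence a subquotient) of the induced module, with reachability controlled by the modular branching/crystal structure. (Your first two paragraphs, the Frobenius-reciprocity route through socles of restricted Specht modules, are a detour you yourself abandon, and rightly so.) However, the step you flag as ``the most technically involved'' --- that from an arbitrary $p$-regular $\lambda$ one can actually reach such a $\mu$ by iterated $\tilde f_i$'s --- is precisely the content of the proof, and you leave it as an assertion (``should follow from combinatorial properties of the crystal''). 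That is a genuine gap: the crystal graph is connected only through the empty partition going \emph{downwards}, so connectivity of the crystal does not by itself tell you that every vertex has, \emph{above} it, a vertex labelled by a partition with irreducible Specht module; this needs an explicit construction.

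The paper fills the gap concretely. It targets the $p$-cores $\rho_k=(k^{p-1},(k-1)^{p-1},\dots,1^{p-1})$, whose Specht modules are simple (being projective in their blocks), and checks combinatorially that starting from any $p$-regular $\lambda$ and iteratively adding the left-most box that keeps the partition $p$-regular, one adds only conormal boxes and terminates at $\rho_{\lambda_1}$. Theorem D of \cite{BK} then guarantees that $D^{\rho_{\lambda_1}}=S^{\rho_{\lambda_1}}$ occurs in $\Ind^{\kk S_{n_1}}_{\kk S_n}D^\lambda$. Two smaller points: your terminology is off --- partitions with $S^\mu=D^\mu$ are the James--Mathas irreducible-Specht partitions, not Jantzen--Seitz partitions (the latter are those whose restriction to $S_{n-1}$ stays irreducible) --- and your fallback suggestion (dominance-order and cyclicity arguments forcing $\phi$ to be surjective for $\mu=(\lambda_1+k,\lambda_2,\dots)$) is not substantiated and would still require an argument of comparable difficulty. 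If you replace the hand-wave by the explicit $\rho_k$ construction (or any other explicit path of conormal additions to an irreducible Specht module), your proof is complete and coincides with the paper's.
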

\begin{proof}
It suffices to prove that $\Ind^{\kk S_{n_1}}_{\kk S_n}D$ has a simple subquotient labelled by a $p$-core, since the latter are isomorphic to the corresponding Specht modules. More particularly we will prove this for $p$-cores of the form
$$\rho_k=(k^{p-1}, (k-1)^{p-1},\cdots, 1^{p-1})\vdash \frac{1}{2}k(k+1)(p-1).$$
By \cite[Theorem~D]{BK}, it suffices to show that by adding conormal boxes to an arbitrary $p$-regular partition, we can obtain such a $\rho_k$. This is indeed true, since for a given $p$-regular partition $\lambda=(\lambda_1,\lambda_2,\dots)$, iteratively adding the left-most possible box which yields a $p$-regular partition will create $\rho_{\lambda_1}$ via adding conormal boxes.
\end{proof}

\begin{lemma}\label{LemS2}
For each simple $\kk S_n$-module $D$, there exists $n_2\ge n$ such that $\Ind^{\kk S_{n_2}}_{\kk S_n}D$ has a trivial subquotient.
\end{lemma}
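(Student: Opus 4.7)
The plan is to apply Lemma~\ref{LemS1} to reduce to the case of a Specht module of a $p$-core, and then invoke the modular branching rule together with an explicit construction of a partition $\mu$ whose Specht module has the trivial as a composition factor.

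First I would apply Lemma~\ref{LemS1} to the given $D$ to obtain $n_1\ge n$ and a $p$-core $\lambda\vdash n_1$ such that the Specht module $S^{\lambda}$ appears as a subquotient of $\Ind^{\kk S_{n_1}}_{\kk S_n}D$. Since $\lambda$ is a $p$-core, the module $D^{\lambda}=S^{\lambda}$ lies in a block of defect zero and is therefore simple projective; consequently every subquotient isomorphic to $S^{\lambda}$ is in fact a direct summand. By transitivity of induction, it therefore suffices to find $n_2\ge n_1$ for which the trivial $\kk S_{n_2}$-module is a composition factor of $\Ind^{\kk S_{n_2}}_{\kk S_{n_1}}S^{\lambda}$.

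Next I would invoke the modular branching rule: the induced module $\Ind^{\kk S_{n_2}}_{\kk S_{n_1}}S^{\lambda}$ admits a Specht filtration whose factors are the modules $S^{\mu}$, indexed by all partitions $\mu\vdash n_2$ with $\mu\supset\lambda$, each appearing with multiplicity $f^{\mu/\lambda}$ (the number of standard Young tableaux of skew shape $\mu/\lambda$). So it suffices to exhibit a single such $\mu$ with $[S^{\mu}:D^{(n_2)}]>0$, where $D^{(n_2)}=\mathbf 1$. A necessary condition is that $\mu$ lies in the same block as $\mathbf 1_{S_{n_2}}$, i.e.\ that the $p$-core of $\mu$ equals the one-row partition $(n_2\bmod p)$; this can be arranged by adding boxes to $\lambda$ on the $p$-abacus so that the $p$-core of the resulting $\mu$ collapses appropriately. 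One then verifies $[S^{\mu}:D^{(n_2)}]>0$ using classical decomposition-number formulas (for instance James's formulas for two-row or hook Specht modules, where the multiplicity of $D^{(n_2)}$ is controlled by carry conditions in the $p$-adic expansion of $n_2$).

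The main obstacle is this last construction of $\mu$: when the $p$-core $\lambda$ has many rows, the partitions $\mu\supset\lambda$ are forced to also have many rows, so one cannot simply reduce to the well-understood hook or two-row case. The technical heart of the proof is therefore an abacus-theoretic argument producing, for arbitrary $p$-core $\lambda$, a partition $\mu\supset\lambda$ lying in the principal block whose Specht module has $\mathbf 1$ as a composition factor; alternatively, one can use Chuang--Rouquier's $\mathfrak{sl}_2$-categorification of the blocks of symmetric groups to gain finer control over the appearance of the trivial module in the induced Specht filtration.
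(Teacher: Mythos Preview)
Your reduction via Lemma~\ref{LemS1} and the Specht filtration of the induced module is exactly the paper's approach. (The direct-summand observation for $p$-core Specht modules is correct but unnecessary: induction is exact, so a trivial subquotient of $\Ind S^\lambda$ already yields a trivial subquotient of the full induced module whenever $S^\lambda$ is merely a subquotient.)

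The genuine gap is precisely the step you yourself flag as ``the main obstacle'': you do not actually produce a partition $\mu\supset\lambda$ with $[S^\mu:\mathbf 1]>0$. The two-row/hook decomposition numbers are, as you note, inapplicable once $\lambda$ has many rows, and the abacus and Chuang--Rouquier suggestions are left as gestures rather than arguments. So the proof as written is incomplete.

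The paper's resolution of this step is far more elementary than anything you propose. James \cite[Theorem~24.4]{James} gives an explicit sufficient condition for the trivial module to occur as a \emph{submodule} of $S^\mu$: writing $\ell_p(a)$ for the least integer with $a<p^{\ell_p(a)}$, the condition is that $p^{\ell_p(\mu_{i+1})}$ divide $\mu_i+1$ for every $i$. Given any $\lambda$ of length $l$, one sets $\mu_l=\lambda_1$ and then recursively $\mu_i=p^{\ell_p(\mu_{i+1})}-1$ for $i<l$. This $\mu$ visibly satisfies James' condition, and since $\mu_i\ge\mu_{i+1}\ge\cdots\ge\mu_l=\lambda_1\ge\lambda_i$ we have $\mu\supset\lambda$. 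No principal-block constraints, abacus combinatorics, or categorification are needed.
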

\begin{proof}
By Lemma~\ref{LemS1}, it suffices to prove instead that for each Specht module $S^\lambda$, for $\lambda\vdash n$, there exists $n_2\ge n$ such that $\Ind^{\kk S_{n_2}}_{\kk S_n}S^\lambda$ has a trivial subquotient. Since $\Ind^{\kk S_{n_2}}_{\kk S_n}S^\lambda$ has a filtration where the subquotients are given by 
$$\{S^\mu\,|\, \mu\vdash n_2\mbox{ and }\lambda\subseteq\mu\},$$
it is in turn sufficient to show that each partition can be included in a partition which satisfies James' condition in \cite[Theorem~24.4]{James} (which implies that the corresponding Specht module has a trivial submodule). This combinatorial claim is easily verified as follows. Following \cite{James}, for each $a\in\BN$, let $\ell_p(a)$ be the minimal natural number for which $a<p^{\ell_p(a)}$. A partition $\mu$ satisfies James' condition if $\mu_i+1$ is divisible by $p^{\ell_p(\mu_{i+1})}$ for each $i$. For a given partition $\lambda$ of length $l$, we can therefore take the partition $\mu$ of length $l$, by setting $\mu_l=\lambda_1$ and $\mu_i=p^{\ell_p(\mu_{i+1})}-1$ for $i<l$.
\end{proof}

\begin{proof}[Proof of Proposition~\ref{PropSasha}]
Let $m$ be the maximum of the numbers $\{n_2\}$ in Lemma~\ref{LemS2}, where we let $D$ vary over all simple $\kk S_n$-modules. Since the induction of a trivial module has a trivial quotient, it follows that $\Ind^{\kk S_{m}}_{\kk S_n}D$ has a trivial subquotient for each simple $\kk S_n$-module $D$. By adjunction, this means that the projective module $\mathrm{Res}^{\kk S_m}_{\kk S_n}P_0$ is a projective generator and hence faithful as a $\kk S_n$-module.
\end{proof}

\begin{remark}
Analysing the proof of Proposition~\ref{PropSasha} shows that for $m$ we can take for instance $n(p-1)(p^{\ell_p(n)}-1)$.\end{remark}

\bibliographystyle{alpha}

\begin{thebibliography}{CEOP} 

\bibitem[A]{A} L. Archer, Hall algebras and Green rings. PhD thesis, University of Oxford.

\bibitem[B1]{B} D. Benson, Commutative Banach algebras and modular representation theory, 2020, arXiv: \url{https://arxiv.org/pdf/2008.13155.pdf}.

\bibitem[B2]{B2} D. Benson, Some conjectures and their consequences for tensor products of modules over a finite p-group, \textit{J. Algebra},
Volume 558, 15 September 2020, p. 24--42.

\bibitem[BE]{BE}  D.~Benson, P.~Etingof: Symmetric tensor categories in characteristic 2. Adv. Math. 351 (2019), 967--999.

  \bibitem[BEO]{BEO}
  D. Benson, P. Etingof, and V. Ostrik,
  New incompressible symmetric tensor categories in positive characteristic,
  2020, arXiv: \url{https://arxiv.org/pdf/2003.10499.pdf}.

\bibitem[BS]{BS} D. Benson and P. Symonds, The non-projective part of the tensor powers of a module, \textit{Journal of the London Mathematical Society}, v. 101, p. 823--856, 2020.

\bibitem[BR]{Berele}
A.~Berele and A.~Regev, {Hook Young diagrams with applications to combinatorics and to representations of Lie superalgebras}, \textit{Adv.~Math.}~{64} (1987), no. 2, 118--175.

\bibitem[BK]{BK}
J.~Brundan, A.~Kleshchev,
On translation functors for general linear and symmetric groups,
\textit{Proc. London Math. Soc.} (3) 80 (2000), no. 1, 75--106.


\bibitem[C1]{Co} K. Coulembier Tannakian categories in positive characteristic.
\textit{Duke Math. Journal}, {169} (2020),
no. 16, p. 3167--3219.


\bibitem[C2]{AbEnv} K. Coulembier: Monoidal abelian envelopes.  \textit{Compos. Math.} 157 (2021), no. 7, 1584--1609.


\bibitem[CEOP]{CEOP} K. Coulembier, P. Etingof, V. Ostrik, B. Pauwels: Monoidal abelian envelopes with a quotient property, 2021, arXiv: \url{https://arxiv.org/pdf/2103.00094.pdf}.


\bibitem[DP]{DP}  C. de Concini, C. Procesi: A characteristic free approach to invariant theory,
\textit{Adv.~Math.} 21 (1976), no. 3, 330--354.

\bibitem[D1]{De} P.~Deligne: Cat\'egories tannakiennes. The Grothendieck Festschrift, Vol. II, 111--195, Progr. Math., 87, Birkh\"auser Boston, Boston, MA, 1990. 

\bibitem[D2]{Del02} P.~Deligne: Cat\'egories tensorielles, \textit{Mosc. Math. J.}, 2 (2002), no. 2, 227--248.


 \bibitem[DG]{DG}
	 M.~Demazure, P.~Gabriel: Groupes alg\'ebriques. Tome I: G\'eom\'etrie alg\'ebrique, g\'en\'eralit\'es, groupes commutatifs. North-Holland Publishing Co., Amsterdam, 1970.


\bibitem[DS]{DS} E. J. Dubuc, R. Street: A construction of bifiltered 2-colimits of categories, 
\textit{Cah. Topol. G\'{e}om. Diff\'{e}r. Cat\'{e}g.}, 47 (2006), no. 2, 83--106.

\bibitem[EGNO]{EGNO}P.~Etingof, S.~Gelaki, D.~Nikshych, V.~Ostrik, 
Tensor categories. 
Mathematical Surveys and Monographs, 205. American Mathematical Society, Providence, RI, 2015. 

\bibitem[EHO]{EHO}
P.~Etingof, N.~Harman, V.~Ostrik, p-adic dimensions in symmetric tensor categories in characteristic p, 
\textit{Quantum Topol.}, 9 (2018), no. 1, 119--140.

\bibitem[EO1]{EOf} P. Etingof, V. Ostrik, On the Frobenius functor for symmetric tensor categories in positive characteristic, 
\textit{J. f\"ur die reine
und angewandte Mathematik}, {773} (2021), p. 165--198.

\bibitem[EO2]{EOs} P. Etingof, V. Ostrik, On semisimplification of tensor categories, 2018,
arXiv: \url{https://arxiv.org/pdf/1801.04409.pdf}.

\bibitem[EOV]{EOV} P.~Etingof, V.~Ostrik, S.~Venkatesh,
Computations in symmetric fusion categories in characteristic $p$, 2017, \textit{Int. Math. Res. Not.}, p. 468--489.

\bibitem[FPS]{FPS} E.~M.~Friedlander, J.~Pevtsova, A.~Suslin,
    Generic and maximal {J}ordan types, \textit{Invent. Math.}, {168}, (2007), no. 3, p. 485--522.


\bibitem[J]{James}
G.D.~James:
The representation theory of the symmetric groups.
Lecture Notes in Mathematics, 682. Springer, Berlin, 1978.




\bibitem[H]{Harman} N.~Harman: Deligne categories as limits in rank and characteristic. arXiv:1601.03426.


\bibitem[G]{G} D.  Gaitsgory, The  notion  of  category  over  an  algebraic  stack, 2005,
arXiv: \url{https://arxiv.org/pdf/math/0507192.pdf}.

\bibitem[M]{Ma} S.~Majid, 
Foundations of quantum group theory. Cambridge University Press, Cambridge, 1995.

\bibitem[N]{N} M. Nagata, Complete reducibility of rational representations of a matrix group, 
\textit{Kyoto Journal of Mathematics}, v.1 (1961), p.87--99. 

\bibitem[O]{O} V. Ostrik, On symmetric fusion categories in positive characteristic, 
 \textit{Selecta Math.} (N.S.) 26 (2020), no. 3, Paper No. 36.

\bibitem[Ve]{Ve} S. Venkatesh, Harish-Chandra pairs in the Verlinde category in positive characteristic, 2019,
arXiv: \url{https://arxiv.org/pdf/1909.11240.pdf}.

\end{thebibliography}

\end{document}